\newtheorem{thm}{Theorem}\newtheorem{lemma}[thm]{Lemma}
\newtheorem{defi}[thm]{Definition}
\newtheorem{prop}[thm]{Proposition}
\newtheorem{rk}[thm]{Remark}
\newcommand{\rr}{{\mathbb{R}}}
\newcommand{\nn}{{\mathbb{N}}}
\newcommand{\E}{{\mathbb{E}}}
\newcommand{\PP}{{\mathbb{P}}}
\newcommand{\QQ}{{\mathbb{Q}}}
\newcommand{\e}{\varepsilon}
\newcommand{\vip}{\vskip.2cm}
\newcommand{\indiq}{\hbox{\rm 1}{\hskip -2.8 pt}\hbox{\rm I}}
\newcommand{\dd}{{\rm d}}
\newcommand{\cS}{{\mathcal{S}}}
\newcommand{\cL}{{\mathcal{L}}}
\newcommand{\cK}{{\mathcal{K}}}
\newcommand{\cF}{{\mathcal{F}}}
\newcommand{\cP}{{\mathcal{P}}}
\newcommand{\cA}{{\mathcal{A}}}
\newcommand{\cB}{{\mathcal{B}}}
\newcommand{\cE}{{\mathcal{E}}}
\newcommand{\ig}{[\![}
\newcommand{\id}{]\!]}
\begin{document}

\title[Keller-Segel equation]
{ Weak convergence of the empirical measure for the Keller-Segel model in both subcritical and critical cases.}

\author{Yoan Tardy}
\address{Sorbonne Universit\'e, LPSM-UMR 8001, Case courrier 158, 75252 Paris Cedex 05, France.}
\email{nicolas.fournier@sorbonne-universite.fr, yoan.tardy@sorbonne-universite.fr}
\subjclass[2010]{65C35, 35K55, 92C17}

\keywords{Keller-Segel equation, Stochastic particle systems, Chemotaxis.}
\begin{abstract}
We show the weak convergence, up to extraction of a subsequence, of the empirical measure for the Keller-Segel system of particles in both subcritical and critical cases,
for general initial conditions. This particle system consists of $N$
planar Brownian motions interacting through a Coulombian attractive force, which is quite singular.
In the subcritical case, a stronger result has been established by
Bresch-Jabin-Wang \cite{bjw} at the price of two simplifications: the whole space $\rr^2$ is
replaced by a torus and the initial condition is assumed to be regular. 
In the subcritical case, our proof is fairly straightforward: we use
a {\it two particles} moment argument, which shows that particles do not aggregate in finite time, 
uniformly in the number of particles. The critical case requires more work.
\end{abstract} 
\maketitle
\section{Introduction and results}

\subsection{The model}\label{ffs}
We consider the classical Keller-Segel model of chemotaxis (or Patlak-Keller-Segel model) in the parabolic-elliptic case, which writes 
\begin{align}\label{EDP}
\partial_t f =& \frac12 \Delta f - 2\pi \theta \nabla \cdot ( f \nabla c ), \\
0 =& \Delta c + f , \label{EDP2}
\end{align}
 with initial condition $f_0\in \cP (\rr^2)$, where $\cP (\rr^2)$ denotes the set of probability measures on $\rr^2$, where $(f,c)$ is the unknown of the equation and $f_t(x)$ and $c_t(x)$ respectively stand for the density of particles and chemoattractant at position $x\in \rr^2$ and time $t\geq 0$. The scalar $\theta>0$ describes the intensity of the attraction of the chemoattractant. 
 \begin{rk}
In the literature, \eqref{EDP} is written 
\begin{align*}
\partial_t f =&  \Delta f - \chi\nabla \cdot ( f \nabla c ),\\
0 =& \Delta c + f,
\end{align*} 
and $f_0$ is assumed to have a finite mass $M>0$. One can easily convince oneself that setting $\tilde f_t(x) =  f_{t/2}(x)/M$ and $ c_{t/2}(x)/M=\tilde c_t(x) $, $(\tilde f, \tilde c )$ solves \eqref{EDP}-\eqref{EDP2} with $\theta = \chi M / (4\pi)$. Observe that if $\chi =1$, one recovers that the classical threshold $M=8\pi$ of \eqref{EDP}-\eqref{EDP2} corresponds to $\theta =2$ in our case. 
\end{rk}
Since $c$ is the solution of the Poisson equation on $\rr^2$ with $f$ as source term, the equation \eqref{EDP} can we rewritten 
\begin{align}\label{MVEDP}
\partial_t f =& \frac 12 \Delta f - \theta\nabla \cdot ( f (K\star f) ),
\end{align}
with
$$
 K(x) = -x/\|x\|^2 \quad  \mbox{ if } x\ne 0\quad \mbox{ and }\quad K(0)=0,
$$
 and $\star$ stands for the convolution product. One can understand this equation as the modeling of an infinite number of particles moving as Brownian motions which attract each other with a Coulombian force.

\vip

The main interest of this equation is the tight competition between the diffusion and the concentration of particles. As Blanchet-Dolbeaut-Perthame have shown in \cite{bdp}, introducing the variance of the problem $V_t = \int_{\rr^2} \|x-\int_{\rr^2} x f_t(\dd x)\|^2 \dd f_t(x)$, we informally have, using several integrations by parts, that $\frac{\dd }{\dd t }V_t = (2-\theta).$ This highlights that there are informally three cases:
\begin{itemize}
\item  the subcritical case, which corresponds to the case where $\theta < 2$: $V_t$ tends to infinity as time goes to infinity which suggests that the diffusion is dominant over the concentration,
\item the critical case where $\theta =2$,
\item the supercritical case, where $\theta >2$: $V_t$ ends up being non-positive, which suggests that a blow-up occurs.
\end{itemize}

\subsection{Weak solutions}

In this paper, we will deal with a very weak type of solution that we define here, 
see e.g. the paper of Blanchet-Dolbeault-Perthame  \cite{bdp}.
\begin{defi}
We say that $f\in C([0,\infty),\cP(\rr^2))$ is a weak solution of \eqref{EDP}-\eqref{EDP2} with initial condition $f_0\in \cP(\rr^2)$ if for all $\varphi \in C^2_b(\rr^2)$, 
\begin{align*}
\int_{\rr^2} \varphi (x) f_t(\dd x) =& \int_{\rr^2} \varphi (x) f_0(\dd x) + \frac12 \int_0^t\int_{\rr^2} \Delta \varphi (x) f_s(\dd x)\dd s \\
&+ \frac\theta 2 \int_0^t \int_{\rr^2} \int_{\rr^2}K(x-y)\cdot [\nabla \varphi(x) - \nabla \varphi (y) ] f_s(\dd x) f_s(\dd y) \dd s.
\end{align*}
\end{defi}
Observe that since $K(0) = 0$ and $\varphi \in C^2_b$, the last integral is well defined. Since $(x,y)\mapsto K(x-y)\cdot [\nabla \varphi (x) - \nabla \varphi (y) ]$ equals $0$ ( arbitrarily) on the set $\{ (x,y) \in \rr^2 : x=y\}$, we expect that a satisfactory solution must verify $\int_0^t  \indiq_{\{x= y\}} f_s(\dd x) f_s(\dd y) \dd s = 0$. 

\subsection{The associated trajectories}

The equation \eqref{MVEDP} comes from an Eulerian point of view. The following SDE models the same problem with a Lagrangian point of view: we consider the behavior of one typical particle among the other.
\begin{defi}
We say that $(X_t)_{t\geq 0}$ is a solution of the stochastic Keller-Segel equation with initial condition $X_0$ if
\begin{align}\label{EDSMV}
\mbox{for all } t\geq 0, \quad X_t = X_0 + B_t + \theta \int_0^t K\star f_s (X_s)\dd s,
\end{align}
where $(B_t)_{t\ge 0}$ is a $2$-dimensional Brownian motion and $f_s$ is the law of $X_s$. 
\end{defi}
The idea is to follow the motion of one specific particle instead of considering the proportion of 
particles at a precise place and a precise time. One easily checks from the It\^o formula that
if $(X_t)_{t\geq 0}$ is a solution of the stochastic Keller-Segel equation \eqref{EDSMV}, then $(f_t)_{t\geq 0}$ is
a weak solution to \eqref{EDP}-\eqref{EDP2}.

\vip

We consider for $N\ge 2$ and $\theta>0$ a natural discretization introduced by Keller-Segel in \cite{ks2} of the stochastic Keller-Segel equation  \eqref{EDSMV}: 
we consider $N\ge 2$ particles with positions $(X^{i,N}_t)_{i\in \ig 1,N\id}$, solving (recall that $K(0)=0$)
\begin{equation}\label{EDS}
 X^{i,N}_t = X^{i,N}_0 + B^i_t + \frac{\theta}{N}\int_0^t\sum_{j=1 }^N K(X^{i,N}_s-X^{j,N}_s)\dd s,
\end{equation}
where $((B^i_t)_{i\in \ig 1,N\id})_{t\ge 0}$ is a family of $N$ independent $2$-dimensional Brownian motion independent from $(X^{i,N}_0)_{i\in \ig 1,N \id }$. We call such a process a $KS(\theta,N)$-process,  $\theta \le 2(N-2)/(N-1)$ and its existence is guaranteed by \cite[Theorem 7]{fj} for any initial datum $(X_0^{i,N})_{i\in \ig 1,N \id}$ with its law $F^N_0$ in $\cP_{sym,1}^*((\rr^2)^N)$, where $\cP_{sym,1}^*((\rr^2)^N)$ denotes the set of exchangeable probability measures on $(\rr^2)^N$ with a finite 
$1$-order moment and such that
$$
F_0^N(\{ \mbox{There exists } i\ne j\in \ig 1,N\id \mbox{ such that } x^i = x^j \})=0.
$$
Moreover, $(X^{i,N}_t)_{i\in \ig 1,N\id}$ is exchangeable and we have, for all $t>0$,
$$
\int_0^t ||K(X^{1,N}_s-X^{2,N}_s)||\dd s<\infty.
$$ 
\subsection{The subcritical case}
We endow the space $C([0,\infty),\cP(\rr^2))$ with the topology of the uniform convergence on the compact sets, $\cP (\rr^2)$ being endowed with the topology of the weak convergence. Our goal is to prove the following result.
\begin{thm}\label{thm1}
Let $\theta \in (0,2)$ and $f_0 \in \cP(\rr^2)$. For each $N\ge N_0 := (1+ \lceil 2/(2-\theta) \rceil)\vee 5$, consider $F_0^N \in \cP_{sym,1}^* ((\rr^2)^N)$ and a $KS (\theta,N)$-process $(X^{i,N}_t)_{t \ge 0, i\in \ig 1,N\id}$ with initial law $F^N_0$, as well as the empirical measure for all $t\ge 0$, $\mu^{N}_t := N^{-1} \sum_{i=1}^N \delta _{X^{i,N}_{t}}$, which of course a.s. belongs to $\cP (\rr^2)$. We assume that $\mu^N_0$ goes weakly to $f_0 $ in probability as $N\to \infty$.

\vip

(i) The sequence $( (\mu^N_t)_{t\ge 0})_{N\ge N_0}$ is tight in $C([0,\infty), \cP (\rr^2))$.

\vip

(ii) For any sequence $(N_k)_{k\ge 0}$ such that $(\mu^{N_k}_t)_{t\ge 0}$ goes in law in $C([0,\infty), \cP(\rr^2))$ as $k\to \infty$ to some $(\mu_t)_{t\ge 0}$, this limit $(\mu_t)_{t\ge 0}$ is a.s. a weak solution to \eqref{EDP}-\eqref{EDP2} starting from $\mu_0 = f_0$. Moreover, for all $T>0$, all $\gamma \in (\theta,2)$.
$$
\E \Big[ \int_0^T \int_{\rr^2}\int_{\rr^2} \|x-y\|^{\gamma-2}\mu_t (\dd x) \mu_t(\dd y) \dd t \Big]< \infty.
$$
\end{thm}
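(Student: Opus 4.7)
The strategy hinges on a uniform-in-$N$ two-particle moment estimate:
\begin{equation*}
\sup_{N\ge N_0} \E\Big[\int_0^T \|X^{1,N}_s - X^{2,N}_s\|^{\gamma-2}\,\dd s\Big] < +\infty \quad \text{for all } T>0,\ \gamma \in (\theta,2).
\end{equation*}
This is obtained by applying It\^o's formula to $t \mapsto \|Z^{12}_t\|^\gamma$ with $Z^{12}_t := X^{1,N}_t - X^{2,N}_t$. Since $\Delta\|z\|^\gamma = \gamma^2\|z\|^{\gamma-2}$ in $\rr^2$ and $B^1 - B^2$ has covariation $2I\,\dd t$, the diffusion part together with the $j=1,2$ contributions to the drift of $Z^{12}$ produce a ``diagonal'' drift of $\gamma(\gamma - 2\theta/N)\|Z^{12}\|^{\gamma-2}$. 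For each $j\ge 3$, the law of cosines yields
\begin{equation*}
Z^{12}\cdot[K(Z^{1j}) - K(Z^{2j})] = -\tfrac12\|Z^{12}\|^2\Big(\tfrac{1}{\|Z^{1j}\|^2}+\tfrac{1}{\|Z^{2j}\|^2}\Big) + \tfrac{(\|Z^{1j}\|^2 - \|Z^{2j}\|^2)^2}{2\|Z^{1j}\|^2\|Z^{2j}\|^2},
\end{equation*}
whose ``bad'' second term is dominated by the ``good'' first via the triangle inequality $(\|Z^{1j}\|^2-\|Z^{2j}\|^2)^2 \le 2\|Z^{12}\|^2(\|Z^{1j}\|^2+\|Z^{2j}\|^2)$. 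After taking expectations, using exchangeability to symmetrize the triple terms into pair moments, and rearranging, the desired bound closes once $\gamma > \theta$.

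For part~(i), marginal tightness of $(\mu^N_t)$ in $\cP(\rr^2)$ reduces to a uniform first-moment bound on $\E[\|X^{1,N}_t\|]$, itself controlled since the empirical center of mass $\bar X^N_t := N^{-1}\sum_i X^{i,N}_t$ is a martingale (the antisymmetry of $K$ kills its drift). For path tightness, the Aldous-Rebolledo criterion is applied to $\langle \mu^N_\cdot, \varphi\rangle$ with $\varphi \in C^2_b(\rr^2)$, using the semimartingale decomposition
\begin{align*}
\langle \mu^N_t, \varphi\rangle &= \langle \mu^N_0, \varphi\rangle + \tfrac12 \int_0^t \langle \mu^N_s, \Delta\varphi\rangle\,\dd s \\
&\quad + \tfrac{\theta}{2}\int_0^t\!\!\int\!\!\int K(x-y)\cdot[\nabla\varphi(x)-\nabla\varphi(y)]\,\mu^N_s(\dd x)\mu^N_s(\dd y)\,\dd s + M^{N,\varphi}_t,
\end{align*}
where $\langle M^{N,\varphi}\rangle_t = O(t/N)$ and the interaction integrand is bounded by $\|D^2\varphi\|_\infty$, so the drift is uniformly Lipschitz in $t$.

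For part~(ii), let $(\mu_t)$ be a limit point of $(\mu^{N_k}_t)$. Linear terms pass to the limit by continuity. The nonlinear interaction is the only subtlety, as its integrand is bounded but discontinuous on the diagonal $\{x=y\}$. Via a Skorohod representation, the uniform bound of Step~1 transfers by Fatou to
\begin{equation*}
\E\Big[\int_0^T\!\!\int\!\!\int \|x-y\|^{\gamma-2}\mu_t(\dd x)\mu_t(\dd y)\,\dd t\Big] < +\infty,
\end{equation*}
giving both the announced moment estimate and $(\mu_t\otimes\mu_t)(\{x=y\}) = 0$ for a.e.\ $t$. Passing to the limit in the interaction term is then achieved by truncating on $\{\|x-y\|<\e\}$ (whose contribution is $O(\e^{2-\gamma})$ uniformly in $N$, thanks to the same moment bound) and using weak convergence on the continuous complement. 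The main obstacle is Step~1: the specific algebraic decomposition of the triple term together with the triangle-inequality bound on its ``bad'' component is what makes the argument close, and $\gamma > \theta$ is precisely the sharp threshold for the resulting coefficient inequality to hold uniformly in $N$.
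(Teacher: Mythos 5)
Your Step~1 --- the uniform two-particle estimate --- is exactly the crux, but the argument you sketch for it does not close. Your law-of-cosines identity is correct, yet the triangle-inequality ``domination'' goes in the wrong direction for what is needed. To bound $\E\big[\int_0^T\|X^{1,N}_s-X^{2,N}_s\|^{\gamma-2}\dd s\big]$ you must bound the drift of $\|Z^{12}\|^\gamma$ (suitably truncated) from \emph{below}, i.e.\ you need a \emph{lower} bound on $Z^{12}\cdot[K(Z^{1j})-K(Z^{2j})]$. The second (``bad'') term in your identity is nonnegative, so it only helps there; bounding it \emph{above} by the first term (and in fact the stated triangle inequality only gives bad $\le \|Z^{12}\|^2(\|Z^{1j}\|^{-2}+\|Z^{2j}\|^{-2})$, i.e.\ twice the good term) merely shows the interaction term has indefinite sign and size $\tfrac12\|Z^{12}\|^2(\|Z^{1j}\|^{-2}+\|Z^{2j}\|^{-2})$. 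If you drop the bad term and take expectations with exchangeability, the cross contribution is only bounded below by $-\,c_{\gamma,\theta}\,\E\big[\|Z^{12}\|^{\gamma}\,\|Z^{13}\|^{-2}\big]$, and this is \emph{not} comparable to $\E[\|Z^{12}\|^{\gamma-2}]$ nor to the cyclically symmetrized sum: in the configuration where particles $1$ and $3$ are at distance $\e$ while particle $2$ is at distance of order $1$, it is of order $\e^{-2}$, whereas $\|X\|^{\gamma-2}+\|Y\|^{\gamma-2}+\|Z\|^{\gamma-2}$ is only of order $\e^{\gamma-2}$ (and no uniform-in-$N$ bound on $\E\int\|Z^{13}\|^{-2}\dd s$ is available). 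So ``symmetrize into pair moments and rearrange'' cannot produce the coefficient $\gamma(\gamma-\theta)$; the threshold $\gamma>\theta$ does not come out of a pairwise bound.

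What closes the argument in the paper is a genuinely three-point inequality used \emph{after} symmetrizing over the three pairs $(1,2)$, $(1,3)$, $(2,3)$: for $X+Y+Z=0$ and nonincreasing nonnegative $\varphi,\psi$, one has $\big[\varphi(\|X\|)X+\varphi(\|Y\|)Y+\varphi(\|Z\|)Z\big]\cdot\big[\psi(\|X\|)X+\psi(\|Y\|)Y+\psi(\|Z\|)Z\big]\ge 0$ (Lemma \ref{inegalitebarycentre}). In your notation this is precisely the statement that the cyclic sum of your ``bad'' terms exceeds the cyclic sum of the ``good'' terms minus the diagonal term $\sum\|X\|^{\gamma-2}$ --- a cancellation between the two pairs adjacent to the near-collision that a per-$j$ upper bound on the bad term cannot see. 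Two further points: It\^o must be applied to a bounded $C^2$ truncation such as $\varphi_a(r)=(r+a)^{\gamma/2}/(1+(r+a)^{\gamma/2})$ rather than to $\|z\|^\gamma$ (not $C^2$ at the origin, unbounded, with singular drift terms), with monotone convergence at the end; and in part (i) your reduction to a uniform bound on $\E\|X^{1,N}_t\|$ is not available, since neither $f_0$ nor $F^N_0$ is assumed to have moments uniformly in $N$ --- the paper uses only tightness of $\|X^{1,N}_0\|$ (from $\mu^N_0\to f_0$) and a de la Vall\'ee Poussin function. Your part (ii) limiting procedure (Skorokhod, Fatou, truncation off the diagonal) is fine in spirit and close to the paper's, but it rests entirely on the missing Step~1.
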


Observe that the condition $N\ge N_0$ implies that the particle system exists thanks to \cite[Theorem 7]{fj}. There is no condition at all on $f_0$: it can be a Dirac mass, it does not need to have finite moments.

\vip

Let us mention that we impose the condition $F_0^N \in  \cP_{sym,1}^* ((\rr^2)^N)$ only to apply
\cite[Theorem 7]{fj}, which concerns the existence of the particle system. The condition is not used anywhere else
in the present paper.
When $\theta < 1/2$, a stronger existence result is established, see \cite[Theorem 5]{fj},
and we can assume only that $F_0^N \in  \cP_{sym,1} ((\rr^2)^N)$.

\vip
For any given $f_0 \in \cP(\rr^2)$ (possibly with some atomes and/or without a moment of order $1$),  
we can easily build $F_0^N$ satisfying all the assumptions: take for example
$F_0^N$ the law of 
$(\chi_N(X^i_0) + (1/N)G^i)_{i\in \ig 1,N\id}$ where $(X^i_0)_{i\ge 1}$ are i.i.d $f_0$-distributed random variables,
where $(G^i)_{i\ge 1}$ are i.i.d standard
bi-dimensional Gaussian random variables
and where $\chi_N(x_1,x_2)=(x_1\land N \lor(-N),x_2\land N \lor(-N))$.

\vip

We highlight that Theorem \ref{thm1} has to be linked with \cite{ft}, where the critical case
was not treated for the sake of conciseness. The relation between these results will be discussed in Section \ref{ref}

\subsection{Existence of the particle system in the critical or supercritical case}\label{criandsupercri}

In the case where $\theta>2(N-2)/(N-1)$, the existence of a process solving \eqref{EDS} {\it for all time} 
is not guaranteed. So we need to introduce a notion of local solution. 
If $\tau$ is  a stopping 
time for a 
filtration $(\cF_t)_{t\ge 0}$, a continuous $(\rr^2)^N$-valued process 
$(X^{i,N}_t)_{ t\in [0,\tau), i\in \ig 1,N \id}$ is a $KS(\theta,N)$-process on $[0,\tau)$ if for all sequences of increasing stopping times $(\tau_n)_{n\in \nn}$
with respect to the filtration $(\cF_t)_{t\ge 0}$ such that $\lim_{n\to \infty} \tau_n = \tau$ a.s. 
and for all $n\geq 1$, 
$(X^{i,N}_{t\land \tau_n})_{ t\ge 0, i\in \ig 1,N \id}$ is $(\cF_{t})_{t\geq 0}$-adapted and 
\eqref{EDS} holds true on $[0,\tau_n]$.

\vip

For the whole paper, we define for all $N\ge 2$, all $\ell\ge 1$, all $k\ge 2$, 
\begin{gather*}
\tau_k^{N,\ell }:= \inf \Big\{t>0 : \exists K\subset \ig 1,N\id : |K|=k \hbox{ and } R_K(X^N_t) \le \frac{1}{\ell} \Big\} \quad\mbox{ and }\quad\tau^N_k = \sup_{\ell\ge 1} \tau_k^{N,\ell},
\end{gather*}
where for all $K\subset \ig 1,N \id$, all $x =(x^1,\cdots , x^N) \in (\rr^2)^N$, 
$$
S_K(x) = \frac{1}{|K|}\sum_{i\in K} x^i  \qquad \mbox{ and } \qquad  R_K(x) = \sum_{i\in K} \|x^i-S_K(x)\|^2.
$$
The stopping time $\tau^{N,\ell}_k$ represents the first time when at least $k$ particles are $\ell^{-1}$-close.
Roughly, $\tau^N_k$ represents the first time when at least $k$ particles collide. However, this does not follow
from the definition, since the $KS(\theta,N)$-process could be discontinuous at time $\tau^{N}_k$.

\vip

One can find the following existence theorem in \cite[Theorem 7 and Lemma 14]{fj},
recall that our $\theta$ corresponds to $\chi/(4\pi)$ in \cite{fj}.
\begin{thm}[Fournier-Jourdain \cite{fj}]\label{fj1}
If $N\ge 5$, $\theta >0$ and $F^N_0 \in \cP_{sym,1}^*(\rr^2)$,
then there exists a $KS(\theta,N)$-process $(X_t^{i,N})_{t\in [0,\tau_{3}^N), i\in \ig 1,N\id}$ to \eqref{EDS}
with initial law $F^N_0$. The family 
$( (X^{i,N}_t)_{t\in [0,\tau_{3}^N)}, i\in \ig 1,N\id)$ is exchangeable and for any $\ell \ge 1$, any $t\geq 0$, we have
$$
\E \Big( \int_0^{t\wedge \tau_{3}^{N,\ell}} \|X^{1,N}_s-X^{2,N}_s\|^{-1}\dd s\Big) < \infty.
$$
Moreover $\tau_3^N < \infty$ a.s. if $\theta>2(N-2)/(N-1)$, and $\tau_3^N=\infty$ a.s. if
$\theta \in (0,2(N-2)/(N-1)]$.
\end{thm}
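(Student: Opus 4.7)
The plan is to follow Fournier--Jourdain \cite{fj}: construct the process by regularizing the singular kernel, derive uniform a priori estimates via It\^o's formula applied to the functionals $R_K$ and to $\|X^1-X^2\|$, then pass to the limit. I would start by replacing $K(x)$ with $K_\e(x)=-x/(\|x\|^2+\e)$, which is bounded and globally Lipschitz; the regularized system admits a unique strong exchangeable solution $(X^{i,N,\e}_t)_{t\ge 0,\,i\in \ig 1,N\id}$ for every $\e>0$, and exchangeability will be inherited in the limit.

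The central identity is an It\^o formula for $R_K(X^{N,\e})$. Exploiting the antisymmetry $K_\e(-x)=-K_\e(x)$ and the resulting cancellations when both endpoints lie in $K$, one obtains, for any $K\subset \ig 1,N\id$ with $|K|=k$,
\begin{align*}
\dd R_K = \Bigl[2(k-1) - \frac{\theta k(k-1)}{N}\Bigr]\dd t + \frac{2\theta}{N}\sum_{i\in K,\, j\notin K}(X^{i,N,\e}-S_K)\cdot K_\e(X^{i,N,\e}-X^{j,N,\e})\dd t + \dd M_K,
\end{align*}
with quadratic variation $\dd\langle M_K\rangle = 4 R_K\,\dd t$. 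In the special case $K=\ig 1,N\id$ the ``outside'' term vanishes and $R_{\ig 1,N\id}$ is a squared-Bessel process of dimension $(N-1)(2-\theta)$. This immediately yields the last assertion of the theorem: for $\theta \le 2(N-2)/(N-1)$ the dimension is $\ge 2$ and $R_{\ig 1,N\id}$ stays positive a.s., whereas for $\theta > 2(N-2)/(N-1)$ the dimension is $<2$ so $R_{\ig 1,N\id}$ hits $0$ in finite time a.s., forcing a full $N$-particle collapse and hence $\tau_3^N<\infty$.

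The delicate direction is to rule out $3$-particle collisions when $\theta\le 2(N-2)/(N-1)$, and this is where I expect the main obstacle. The strategy is to apply It\^o to $R_K^{-\alpha}$ for a $3$-subset $K$ and a carefully chosen $\alpha>0$: the intrinsic drift $2(k-1)-\theta k(k-1)/N$ is strictly positive for $k=3$ (note $2(N-2)/(N-1)<N/3$ as soon as $N\ge 5$), while the ``outside'' contribution, which is not sign-definite, must be controlled by averaging over the $\binom{N}{3}$ choices of $K$ via exchangeability and by using the pointwise bound $|K_\e(x)|\le\|x\|^{-1}$, so as to be absorbed into a Gronwall inequality. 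The difficulty is that this entangles the $R_K$'s for subsets of different sizes, so one needs to run the estimate jointly.

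Finally, the integrability bound is obtained by applying It\^o to $\|X^1-X^2\|$: its drift contains a strictly negative term proportional to $-\|X^1-X^2\|^{-1}$ together with remainders that are bounded uniformly up to $\tau_3^{N,\ell}$. Stopping at $t\wedge \tau_3^{N,\ell}$, taking expectations and rearranging gives $\E\bigl[\int_0^{t\wedge\tau_3^{N,\ell}}\|X^1-X^2\|^{-1}\dd s\bigr]<\infty$. With these uniform-in-$\e$ bounds and standard tightness/stability arguments, one passes to the limit $\e\to 0$ to recover a solution of \eqref{EDS} on $[0,\tau_3^N)$ with all the stated properties.
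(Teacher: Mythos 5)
First, a point of context: the paper does not prove this statement at all --- it is imported verbatim from Fournier--Jourdain \cite[Theorem 7 and Lemma 14]{fj}, so there is no ``paper's own proof'' to compare against. Your sketch does reproduce the right skeleton of the argument in \cite{fj} (regularize $K$, show $R_K$ solves a squared-Bessel-type SDE with drift $d_{\theta,N}(|K|)=(|K|-1)(2-|K|\theta/N)$ and quadratic variation $4R_K\,\dd t$, and read off collision behaviour from whether the dimension is below $2$); your It\^o identity for $R_K$ and the treatment of the case $K=\ig 1,N\id$ agree with Lemma \ref{bessel} of the present paper, and the deduction that $\tau_3^N<\infty$ when $\theta>2(N-2)/(N-1)$ is correct.

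However, two steps as written would fail. First, in the integrability estimate you claim the remainder terms in the It\^o expansion of $\|X^{1,N}-X^{2,N}\|$ are ``bounded uniformly up to $\tau_3^{N,\ell}$''. They are not: $\tau_3^{N,\ell}$ only keeps triples of particles apart, and binary collisions do occur for every $\theta>0$ (the pair dimension is $d_{\theta,N}(2)=2-2\theta/N<2$), so $K(X^{1,N}-X^{j,N})$ for $j\ge 3$ is exactly as singular as the term you are trying to estimate. The bound only closes after symmetrizing over triples and invoking the three-point convexity inequality (Lemma \ref{inegalitebarycentre} here, i.e.\ the mechanism of the ``main a priori estimate'' subsection), not by treating the cross terms as bounded. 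Second, and more seriously, ruling out $k$-collisions for $3\le k\le N$ is not achieved by your $R_K^{-\alpha}$ plus Gronwall proposal: when a fourth particle approaches the cluster $K$, the ``outside'' drift is of the same order as the intrinsic one and cannot be absorbed; moreover the relevant criterion is $d_{\theta,N}(k)\ge 2$ for \emph{every} $k\in\ig 3,N\id$ (which holds by concavity of $k\mapsto d_{\theta,N}(k)$ once it holds at $k=3$ and $k=N$), not mere positivity of the drift. The actual argument localizes on time intervals where the cluster $K$ is isolated, removes the outside interaction by a Girsanov change of measure, and only then compares $R_K$ with a true squared Bessel process --- this is precisely what the present paper has to redo in Lemma \ref{girsanov} and Proposition \ref{alternative} for the critical case. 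Finally, the passage to the limit $\e\to 0$ ``by standard tightness'' glosses over the real difficulty of the existence part, namely continuing the dynamics through the binary collisions that do occur; in \cite{fj} this requires a bespoke construction and is not a routine stability argument.
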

The local solution to \eqref{EDS} of \cite[Theorem 7]{fj} is built from \cite[Lemma 14]{fj}.
Hence we actually have $\E [\int_0^{t\wedge \tau_{3}^{N,\ell}} \|X^{1,N}_s-X^{2,N}_s\|^{\alpha-2}\dd s] < \infty,$ 
with some $\alpha <1$. This is stronger than the estimate we recalled in Theorem \ref{fj1}.

\vip

The existence of a global solution to \eqref{EDS} is false when $\theta \geq 2$. Indeed, consider for example 
in the critical case $\theta=2$. Assume that a global solution $(X^N_t)_{t\geq 0}$ exists. 
Then the process $R_{\ig 1,N\id}(X^N)$ would be a squared Bessel process with dimension $0$, 
see Lemma \ref{bessel}. Such a squared Bessel 
process reaches $0$ in finite time and then remains at $0$ for all times, meaning that all the particles
share the same location. As a consequence, the drift term of \eqref{EDS} cannot be well-defined.

\vip

However, the existence of the particle system until $\tau_3^N$ is sufficient to our purpose: 
in the critical case $\theta=2$, which is the object of our next main result,
it holds that $\tau_3^N$ tends to infinity in probability as $N\to \infty$. This will be shown in 
Section \ref{building}.

\vip
This is coherent with the fact that
the solution of the deterministic Keller-Segel equation \eqref{EDP}--\eqref{EDP2} is global, see \cite{ft}.

\subsection{The critical case}
Our second main result is the following.
\begin{thm}\label{thm2}
Assume $\theta=2$. Let $f_0 \in \cP(\rr^2)$ that  checks $\max_{x\in \rr^2}f_0(\{x\})<1$. For each $N\ge N_0 :=5$, consider $F_0^N \in \cP_{sym,1}^* ((\rr^2)^N)$ and $(X^{i,N}_t)_{t\in [0,\tau_3^N), i\in \ig 1,N\id}$ a $KS (2,N)$-process on $[0,\tau_3^N)$ with initial law $F^N_0$. We set $\mu^{N}_t := N^{-1} \sum_{i=1}^N \delta _{X^{i,N}_{t}}$ for $t\in [0,\tau^N_3)$ and we assume that
$\mu^N_0$ goes weakly to $f_0$ in probability as $N\to \infty$.
\vip
There exists an increasing sequence of integers $(\ell_N)_{N\ge N_0}$ such that, setting
$\beta_N=\tau_3^{N,\ell_N}$, it holds that $\lim_{N\to \infty} \beta_N=\infty$ in probability and
\vip

(i) the sequence $( (\mu^{N}_{t\land \beta_N})_{t\ge 0})_{N\ge 5}$ is tight in $C([0,\infty), \cP (\rr^2))$;

\vip

(ii) for any sequence $(N_k)_{k\ge 0}$ such that $(\mu^{N_k}_{t\land \beta_{N_k}})_{t\ge 0}$ goes in law, 
as $k\to \infty$, in $C([0,\infty), \cP(\rr^2))$, to some $(\mu_t)_{t\ge 0}$, it holds that
$\mu_t$ is a.s a weak solution to \eqref{EDP} starting from $\mu_0 = f_0$. Moreover,
\begin{equation}\label{ggg}
\E \Big[ \int_0^\infty \int_{\rr^2}\int_{\rr^2} \indiq_{\{x=y\}}\mu_t (\dd x) \mu_t(\dd y) \dd t \Big]=0.
\end{equation}
\end{thm}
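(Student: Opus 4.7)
The plan is to follow the structure of the proof of Theorem \ref{thm1}, but to handle the fact that at $\theta=2>2(N-2)/(N-1)$ the $3$-particle collision time $\tau_3^N$ is a.s.\ finite (Theorem \ref{fj1}). We therefore localize the whole argument at $\beta_N=\tau_3^{N,\ell_N}$, for a sequence $\ell_N\to\infty$ chosen slowly enough that $\beta_N\to\infty$ in probability, yet fast enough for the uniform-in-$N$ two-particle moment estimates to remain valid up to $\beta_N$. The hypothesis $\max_x f_0(\{x\})<1$ rules out the degenerate initial condition (all mass asymptotically at one point) which would spoil the two-particle estimate at $t=0$.

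The core ingredient is a two-particle estimate. Choose $\gamma\in(0,2)$ and apply It\^o's formula to $\|X^{1,N}_t-X^{2,N}_t\|^\gamma$ stopped at $\tau_3^{N,\ell}$; writing $Y_s=X^{1,N}_s-X^{2,N}_s$, the Laplacian contributes $+\gamma^2\|Y_s\|^{\gamma-2}$, the self-interaction ($j=1,2$) terms contribute $-(4\gamma/N)\|Y_s\|^{\gamma-2}$ to the drift (recall $\theta=2$), and the three-body ($j\ne 1,2$) terms are dominated pointwise via $|(X^1-X^2)\cdot[K(X^1-X^j)-K(X^2-X^j)]|\le 2+\|X^1-X^j\|/\|X^2-X^j\|+\|X^2-X^j\|/\|X^1-X^j\|$, whose singular ratios are precisely prevented from blowing up jointly with $\|Y_s\|^{\gamma-2}$ on $\{t<\tau_3^{N,\ell}\}$ by the very definition of $R_{\{1,2,j\}}$. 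Taking expectations, symmetrizing the three-body sum by exchangeability and absorbing that contribution yields
$\E\bigl[\int_0^{T\wedge\tau_3^{N,\ell}}\|Y_s\|^{\gamma-2}\dd s\bigr]\le C_{\gamma,T}$
uniformly in $N\ge 5$ and, for each $N$, in $\ell\ge 1$. Combined with Markov's inequality and a union bound over unordered triples in $\ig 1,N\id$, this gives $\PP(\tau_3^{N,\ell}\le T)\to 0$ as $\ell\to\infty$ for each fixed $N,T$, and a diagonal extraction then produces $\ell_N\to\infty$ with $\beta_N\to\infty$ in probability.

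The rest follows the subcritical playbook. Tightness of $(\mu^N_{\cdot\wedge\beta_N})$ in $C([0,\infty),\cP(\rr^2))$ comes from Jakubowski's criterion applied to $\langle\varphi,\mu^N_{t\wedge\beta_N}\rangle$ for $\varphi\in C^2_b(\rr^2)$ via its martingale decomposition, using $|K(x-y)\cdot[\nabla\varphi(x)-\nabla\varphi(y)]|\le\|\nabla^2\varphi\|_\infty$. For a subsequential limit $(\mu_t)$, Fatou applied to the two-particle estimate yields
$\E\bigl[\int_0^T\iint\|x-y\|^{\gamma-2}\mu_t(\dd x)\mu_t(\dd y)\dd t\bigr]<\infty$
for every $\gamma\in(0,2)$, which forces \eqref{ggg}, because any atom of $\mu_t$ over a set of positive time-measure would make the inner double integral infinite on a product set of positive measure. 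In turn, \eqref{ggg} enables the passage to the limit in the martingale formulation, since the kernel $K(x-y)\cdot[\nabla\varphi(x)-\nabla\varphi(y)]$ is bounded and continuous off the diagonal, and the diagonal is $\mu_t\otimes\mu_t$-null a.s.

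The main obstacle is the two-particle estimate at the critical value $\theta=2$. In the subcritical case a favourable drift coefficient proportional to $(2-\theta)>0$ closes the estimate immediately; here that coefficient vanishes, so one must balance the Laplacian dissipation $\gamma^2\|Y\|^{\gamma-2}$ against the three-body drift using only the protection afforded by stopping at $\tau_3^{N,\ell}$, with constants quantitative enough in both $\ell$ and $N$ for the subsequent choice $\ell_N\to\infty$ to preserve $\beta_N\to\infty$ in probability.
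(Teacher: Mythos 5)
Your construction of $(\ell_N)$ is based on a claim that is false: you assert that $\PP(\tau_3^{N,\ell}\le T)\to 0$ as $\ell\to\infty$ for each fixed $N$, and then diagonalize. But at $\theta=2>2(N-2)/(N-1)$, Theorem \ref{fj1} states that $\tau_3^N<\infty$ a.s.\ for every fixed $N$, so $\PP(\tau_3^{N,\ell}\le T)\to\PP(\tau_3^N\le T)>0$ as $\ell\to\infty$; no choice of $\ell_N\to\infty$ obtained by sending $\ell\to\infty$ at fixed $N$ can give $\beta_N\to\infty$. The divergence of $\beta_N$ is a statement about $N\to\infty$, and it is precisely here that the hypothesis $\max_x f_0(\{x\})<1$ is needed (not, as you suggest, to ``save the two-particle estimate at $t=0$''). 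The paper's mechanism (Section \ref{building}) is entirely different: first one shows via Girsanov and squared-Bessel dimension counting (Lemmas \ref{bessel}, \ref{girsanov}, Proposition \ref{alternative}, Lemma \ref{dimension}) that the first collision of more than two particles in fact involves $k_2\in\{N-2,N-1\}$ particles, i.e.\ is a near-global collapse; then such a collapse forces the total empirical variance $R_{\ig 1,N\id}(X^N_t)$ --- a squared Bessel process of dimension $(N-1)(2-\theta)=0$ started at order $N$ because $f_0$ is not a Dirac mass --- to descend to a moderate level, which takes a time diverging with $N$ (Step 2 of the proof of Proposition \ref{fj2}-(ii), together with the sixth-moment bound of Lemma \ref{compactfini} to exclude a small far-away cluster). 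None of this is replaceable by a Markov/union-bound over triples.

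The second gap is your core a priori estimate. The claimed bound $\E\bigl[\int_0^{T\wedge\tau_3^{N,\ell}}\|X^{1,N}_s-X^{2,N}_s\|^{\gamma-2}\dd s\bigr]\le C_{\gamma,T}$, uniform in $N$ \emph{and} in $\ell$, for $\gamma\in(0,2)$ at $\theta=2$, is not delivered by your argument. After symmetrization (exactly as in the subcritical computation of the paper), the three-body drift contributes a term of order $-\gamma\theta\,\|X^{1}-X^{2}\|^{\gamma-2}$, which at $\theta=2$ dominates the It\^o dissipation $\gamma^2\|X^{1}-X^{2}\|^{\gamma-2}$ for every $\gamma<2$; this is why Proposition \ref{estimgamma} requires $\gamma\in(\theta,2)$, an empty interval here. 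Your alternative, the pointwise bound by $2+\|X^1-X^j\|/\|X^2-X^j\|+\|X^2-X^j\|/\|X^1-X^j\|$ controlled ``by the definition of $R_{\{1,2,j\}}$'', only protects the three-particle configuration down to scale $\ell^{-1/2}$, so any constant obtained this way degenerates as $\ell\to\infty$ --- but uniformity in $\ell$ is exactly what you need, since $\ell_N\to\infty$ and you then invoke Fatou along the subsequence. (A further warning sign: your estimate never uses that $f_0$ is not a Dirac mass, yet if it held uniformly it would yield \eqref{ggg} even for $f_0=\delta_{x_0}$, where the limit is a Dirac and the left side of \eqref{ggg} is positive.) The paper replaces this by the genuinely critical estimate of Lemma \ref{estimeeG}: a \emph{three}-particle functional $G$ built from $L(r)=\log(1+1/r)-1/(1+r)$, whose expectation along the system is bounded uniformly in $N$ up to $\beta_N$; since $G(x,x,z)=\infty$ for $z\ne x$, this kills atoms of mass in $(0,1)$, while full Dirac limits are excluded by the monotonicity of dispersion (Lemma \ref{monotone}) together with the hypothesis on $f_0$, giving Proposition \ref{diffu} and hence \eqref{ggg}. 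Your tightness argument and the final passage to the limit (with the remainder coming from stopping at $\beta_{N_k}$) are fine in outline, but they rest on the two steps above, which as proposed do not work.
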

The only condition on $f_0$ is that it is not a full Dirac mass, it can have any atoms as it wants
provided none of these atoms have mass $1$.
\vip
Since $\beta_N\to \infty$, the above theorem implies that if we fix $T>0$,  
$(\mu^N_t)_{t\in [0,T]}$ is well-defined with high probability (as $N\to\infty$), is tight in some sense,
and the accumulation points of this sequence are almost surely weak solutions of \eqref{EDP}-\eqref{EDP2} 
on $[0,T]$.

\subsection{Comparison with available results and comments}\label{ref}

The Keller-Segel equation \eqref{EDP}-\eqref{EDP2} was first introduced by Patlak in \cite{p} and Keller and Segel in \cite{ks} as a model for chemotaxis. One can find exhaustive content on what is understood about this model in the recent book of Biler \cite{b} and the review paper of Arumugan-Tyagi \cite{at}.

\vip

The notion of propagation of chaos was introduced by  Kac in \cite{k}, the idea was to make a step toward the rigorous justification of the Boltzmann equation. The idea is to approximate the solution of a mesoscopic PDE by the solution to an associated microscopic problem, which involves interacting particles. The case of Lipschitz continuous interaction kernels is now well-understood, we mention for example McKean in \cite{mk}, Sznitman in \cite{sz}, M\' el\' eard in  \cite{mel} and Mischler-Mouhot in \cite{mm} for significant contributions to the theory. The case of singular kernels is much more complicated and has been studied for different kinds of kernels concerning the viscous Burgers equation by Bossy-Talay in \cite{bt} and Jourdain in \cite{j}, the Dyson model by C\' epa-L\' epingle in \cite{cl}, etc.

\vip

A model relatively close to our subject is the $2d$-Navier-Stokes particle system studied by Osada in \cite{o} where the kernel $K$ is replaced by $ x^\perp/\|x\|^2$, which is as singular as the kernel of Keller-Segel, 
but is rotative instead of being attractive. This implies that collisions do not occur so that the singularity is not visited.
Osada obtained in \cite{o} the convergence of the corresponding stochastic
particle system under a large viscosity assumption.
This condition has been removed by Fournier-Hauray-Mischler in \cite{fhm} thanks to a compactness-uniqueness method, 
and then, more recently, Jabin-Wang obtained in\cite{jw} a quantitative convergence result,
using a {\it modulated free entropy} method. We mention Serfaty who introduced in \cite{serf} the modulated energy method in the deterministic case.  

\vip
Let us mention Stevens, who studied in \cite{stev}  the case where the chemoattractant is not in the stationary 
regime (parabolic-parabolic Keller-Segel equation), by considering a particle system
with two types of particles, representing bacteria and chemoattractant.

\vip

This paper uses some ideas already present in the papers of Fournier-Jourdain \cite{fj} and 
Fournier-Tardy \cite{ft}. In \cite{fj}, there is already a proof of Theorem 
\ref{thm1} in the case where $\theta<1/2$: the estimates there are not sufficiently precise to cover
the whole subcritical case $\theta<2$.
In \cite{ft}, there is a simple proof
of non-explosion of the weak solution to \eqref{EDP}-\eqref{EDP2} in the subcritical case. The proof relies
on a {\it two-particles} moment method which extends the
simple non-explosion proof elaborated in the paper of Biler-Karch-Lauren\c cot-Nadzieja \cite{bklnD, bklnP} 
concerning  the radially symmetric case, in which case the two-particles moment is not necessary since 
it actually rewrites as a simple one-particle moment. 
The method introduced in \cite{ft} is simple enough so that we are able to adapt it to show that
the particle system does not explode, uniformly in $N$ in some sense. Moreover, at the price of much more work,
we are able to include 
the critical case. We highlight that the result of \cite{ft} only concerns the deterministic 
equations \eqref{EDP}--\eqref{EDP2} and not the stochastic particle system \eqref{EDS}. 
The novelty of our result is about the convergence of the stochastic particle system to the weak global 
solution.

\vip

The main drawback of our results lies in the fact that we are not able to show the uniqueness of 
the weak solution of \eqref{EDP}--\eqref{EDP2} in order to conclude that the empirical measure truely 
converges weakly:
we only have the convergence of some subsequences. Let us mention that the uniqueness 
of mild solutions of the Keller-Segel equation is well-established (and easy) under a few regularity 
assumptions on the initial condition, 
see for example the paper of Olivera-Richard-Tomasevic \cite{ort}. However, 
it seems very difficult to prove that our {\it weak} solution
is {\it mild}.

\vip

Notice that we have not the least assumption on the limiting initial condition $f_0 \in \cP(\rr^2)$,
except in the critical case $\theta=2$ where we impose that $f_0$ is not a Dirac mass.
This last condition is necessary, since when $\theta=2$, Dirac masses are in some sense stationary solutions,
because we informally have 
$\frac \dd {\dd t} \int_{\rr^2} \|x-\int_{\rr^2} x f_t(\dd x)\|^2 \dd f_t(x)=2-\theta=0$, 
as mentioned in Subsection \ref{ffs}. A small drawback of our result is the condition
$F_0^N \in \cP^*_{sym,1}((\rr^2)^N)$. As already mentioned, we use nowhere this condition except to apply
the existence result (for the particle system) of Fournier-Jourdain \cite{fj}. We believe 
that when $\theta \in (0,2)$, there is actually existence of the particle system (at least for $N$ sufficiently
large) without this assumption, but this would require considerably much more work.

\vip

Concerning the mean-field limit of the Keller-Segel particle system, let us recall the main available results.

\vip

(a) Godinho-Quininao proved in \cite{gq} the convergence of the particle system, following the ideas 
of Fournier-Hauray-Mischler found in \cite{fhm}, when $K$ is replaced by $-x/(2\pi\|x\|^{1+\alpha})$ with $\alpha \in (0,1)$. This kernel is strictly less singular, this prevents any explosion issue and 
so the result is obtained without any discussion on some parameter such as $\theta$ in our case.

\vip

(b)  Olivera-Richard-Tomasevic have shown in \cite{ort} the convergence of a smoothed particle system  for every $\theta >0$ (even supercritical), where $K$ is replaced, roughly, by $-x/(\e_N+||x||^2)$ with $\e_N$ going to $0$ as
$N\to \infty$ in such a way that $\lim_N \sqrt N \e_N = \infty$. We highlight that the convergence is uniform on every compact set of $[0,T^*)$ where $T^*$ is the maximal time of the unique mild solution of the Keller-Segel equation (which is infinite in the subcritical case).  
The method is based on a semigroup approach developed by Flandoli \cite{fl}.

\vip

(c)   In Theorem $6$ of the paper of Fournier-Jourdain \cite{fj}, the convergence of the empirical measure is 
shown, up to extraction, when $\theta<1/2$, thus proving Theorem \ref{thm1} when $\theta <1/2$.
They have a light additional moment condition on $f_0$, as well as a (uniform in $N$) light moment condition on
$F_0^N$. However, they do not need to impose that $F_0^N \in \cP^*_{sym,1}((\rr^2)^N)$. When 
$\theta \in[1/2,2)$, Fournier-Jourdain have to assume that $F_0^N \in \cP^*_{sym,1}((\rr^2)^N)$ to prove 
the existence of the particle system when $\theta \in [1/2,2)$ (they do not study the limit
$N\to \infty$ in this case). We impose that $F_0^N \in \cP^*_{sym,1}((\rr^2)^N)$ precisely to use this existence
result.
Finally, it is worth emphasizing that the convergence in law we prove holds in $\cP (C(\rr_+, \rr^2))$, 
which is weaker than the convergence in law in $C(\rr_+, \cP(\rr^2))$ obtained in \cite{fj}. Again, this is
due to the fact that the more $\theta$ is large, the more the attraction is strong and the more the singularity 
of the kernel $K$ is visited.
With our proof, it might be possible to show a convergence in $\cP (C(\rr_+, \rr^2))$ when 
$\theta \in (0,1)$, but not when $\theta \in (1,2]$: roughly, our proof relies on the estimate
$\sup_{N\geq 1} \E[\int_0^t ||X^{1,N}_s-X^{2,N}_s||^{\gamma-2}]\dd s$ for all $\gamma \in (\theta,2)$, all $t\geq 0$, 
see Subsection \ref{map}. Recalling that $||K(x)||=||x||^{-1}$, this estimate
guarantees that the drift in \eqref{EDS} is well-controled, uniformly in $N\geq2$,
only when $\theta \in (0,1)$, in which case can choose $\gamma \in (\theta,2)$ such that $\gamma-2=-1$.

\vip

(d)  Finally,  Bresch-Jabin-Wang studied in \cite{bjw}  
the convergence of the empirical measure with quantitative estimates in the case where $\theta<2$ 
by using a {\it modulated free entropy } method in the same spirit as \cite{jw}.
They obtain a full convergence result (not only along a subsequence). However, they make two important
simplifications: they replace $\rr^2$ by a torus, and deal with regular initial conditions, 
namely at least $f_0$ of class $W^{2,\infty}$ on the torus.  Replacing $\rr^2$ by a torus
seems benign  from a heuristic point of view, but they widely use it during their proof.
In particular, this implies that the solution $f_t$ to the Keller-Segel equation is bounded from below,
which seems crucial to use some relative entropy technics.
\vip
To summarize, \cite{fj} and \cite{bjw} are the only available works dealing with a true attractive 
interaction in $1/r$, without any regularization. The first one is restricted to the very subcritical 
case $\theta \in (0,1/2)$. The second one treats the whole subcritical case $\theta \in (0,2)$,
but with $\rr^2$ replaced by a torus and with smooth initial conditions. Moreover, we believe
that in the subcritical case, our arguments are simpler than those of \cite{bjw} and we have
very few assumptions on the initial condition.
But of course, a drawback is that we prove convergence only up to
extraction of a subsequence. At last, it seems that nothing was known about the critical case
$\theta=2$.

\vip

Finally, let us mention that Cattiaux-Ped\` eches give in \cite{cp} another proof of the existence 
of a solution to \eqref{EDS}, relying on the use of Dirichlet forms, and they establish some 
weak uniqueness results.

\subsection{Main a priori estimate in the subcritical case}\label{map}

The subcritical case (which means the case where $\theta \in (0,2)$) is the easiest to handle and here we briefly present the main computation. The idea is to find an a priori estimate which testifies to the fact that particles don't have the tendency to aggregate. We fix $\gamma \in (\theta,2)$. We set 
$$
S_t = \E[\|X^{1,N}_t-X^{2,N}_t\|^{\gamma}\wedge 1 ].
$$
The idea is that when differentiating $S_t$, the diffusion makes appear something like
$\E[\|X^{1,N}_t-X^{2,N}_t\|^{\gamma -2}]$. If it dominates the drift term (which holds true in
the subcritical case for some well-chosen value of $\gamma$), this will imply, 
since $S_t$ is obviously bounded, an estimate on $\int_0^t\E[\|X^{1,N}_s-X^{2,N}_s\|^{\gamma -2}] \dd s$.
In some sense, this tells us that particles do not collide too often. 

\vip

 Informally using the It\^ o formula we get
 \begin{align}\label{rr}
 \| X^{1,N}_t - X^{2,N}_t\|^\gamma \wedge 1 = \| X^{1,N}_0 - X^{2,N}_0\|^\gamma \wedge 1 + M_t + \int_0^t  \indiq_{\|X^{1,N}_s-X^{2,N}_s\|\le 1}\Big( I^1_s + \frac{\theta}{N}I^2_s \Big)\dd s,
 \end{align}
 where $(M_t)_{t\ge 0}$ is a martingale, and where 
\begin{gather*}
 I^1_s = \Big(\gamma^2-\frac{2\gamma\theta}{N} \Big) \|X^{1,N}_s-X^{2,N}_s\|^{\gamma-2},   \\
I^2_s = \gamma\sum_{k=3}^N (K(X^{1,N}_s-X^{k,N}_s) + K( X^{k,N}_s-X^{2,N}_s)) \cdot (X^{1,N}_s-X^{2,N}_s)\|X^{1,N}_s - X^{2,N}_s\|^{\gamma-2}.
\end{gather*}
 
Taking expectations in \eqref{rr}, we find
\begin{align}\label{rrr}
S_t = S_0 + \int_0^t \E\Big[ \indiq_{\|X^{1,N}_s-X^{2,N}_s\|\le 1}\Big( I^1_s + \frac{\theta}{N}I^2_s \Big)\Big]\dd s.
\end{align}

Using exchangeability we get that $\E[ \indiq_{\|X^{1,N}_s-X^{2,N}_s\|\le 1} I^2_s]$ equals 
\begin{align*}
 \gamma (N-2) \E [  \indiq_{\|X^{1,N}_s-X^{2,N}_s\|\le 1}(K(X^{1,N}_s-X^{3,N}_s) +& K(X^{3,N}_s-X^{2,N}_s))\\
& \cdot (X^{1,N}_s-X^{2,N}_s) \|X^{1,N}_s-X^{2,N}_s\|^{\gamma-2}],
\end{align*}
so that using exchangeability again, 
\begin{align}\label{uuu}
 \frac{\theta}{N}\int_0^t\E\Big[ \indiq_{\|X^{1,N}_s-X^{2,N}_s\|\le 1}I^2_s \Big]\dd s = \frac{\gamma \theta (N-2)}{3N} \int_0^t \E [ F(X^{1,N}_s, X^{2,N}_s, X^{3,N}_s) ] \dd s, 
\end{align}
where 
\begin{align*}
F(x,y,z)=& \Big(\frac{Y}{\|Y\|^2}+\frac{Z}{\|Z\|^2}\Big) \cdot X \|X\|^{\gamma -2} \indiq_{\|X\|\le 1} \\
&+ \Big(\frac{Z}{\|Z\|^2}+\frac{X}{\|X\|^2}\Big) \cdot Y \|Y\|^{\gamma -2}\indiq_{\|Y\|\le 1}\\
&+  \Big(\frac{X}{\|X\|^2}+\frac{Y}{\|Y\|^2}\Big) \cdot Z \|Z\|^{\gamma -2}\indiq_{\|Z\|\le 1},
\end{align*}
where we have set $X=x-y$, $Y=y-z$ and $Z=z-x$. By furthermore setting $G(x,y,z) = \|X\|^{\gamma-2} \indiq_{\|X\|\le 1} + \|Y\|^{\gamma-2} \indiq_{\|Y\|\le 1}+\|Z\|^{\gamma-2} \indiq_{\|Z\|\le 1}$, we get that $G(x,y,z)+F(x,y,z)$ equals
\begin{align*}
 \Big( \frac{X}{\|X\|^2}+\frac{Y}{\|Y\|^2}+\frac{Z}{\|Z\|^2}\Big)\cdot \Big(  X\|X\|^{\gamma-2} \indiq_{\|X\|\le 1} + Y\|Y\|^{\gamma-2} \indiq_{\|Y\|\le 1}+Z\|Z\|^{\gamma-2} \indiq_{\|Z\|\le 1} \Big),
\end{align*}
which is positive according to Lemma \ref{inegalitebarycentre}. Injecting in \eqref{uuu}, we get 
\begin{align*}
 \frac{\theta}{N}\int_0^t\E\Big[ \indiq_{\|X^{1,N}_s-X^{2,N}_s\|\le 1}I^2_s& \Big]\dd s \ge -\frac{\gamma \theta (N-2)}{3N} \int_0^t \E [ G(X^{1,N}_s, X^{2,N}_s, X^{3,N}_s) ] \dd s\\
 =& -\frac{\gamma \theta (N-2)}{N} \int_0^t \E [ \indiq_{\|X^{1,N}_s-X^{2,N}_s\|\le 1} \|X^{1,N}_s-X^{2,N}_s\|^{\gamma-2} ] \dd s.
\end{align*}
Gathering this and \eqref{rrr} we get 
\begin{align*}
S_t \ge S_0 + \gamma(\gamma-\theta)\int_0^t \E [ \indiq_{\|X^{1,N}_s - X^{2,N}_s \|\le 1 } \|X^{1,N}_s - X^{2,N}_s \|^{\gamma-2} ]\dd s.
\end{align*}
Since $S_t$ is bounded by $1$ (because $\|X^{1,N}_t-X^{2,N}_t\|^\gamma \wedge 1 \le 1$ a.s), $\gamma \in (\theta,2)$, and $S_0$ is nonnegative, we get that 
$$
\E\Big[\int_0^t  \indiq_{\|X^{1,N}_s - X^{2,N}_s \|\le 1 } \|X^{1,N}_s - X^{2,N}_s \|^{\gamma-2} \dd s \Big]\leq \frac{1}{\gamma(\gamma-\theta)}.
$$
This provides some bound saying that uniformly in $N$, the particles do not spend too much time
close to each other. As we will see, this is sufficient to conclude in the subcritical case.
The critical case is of course more complicated.

\subsection{Plan of the paper}
In Section \ref{basicproperties} we show some basic formulas verified by a $KS$-process. In Section \ref{sectcompact} we will prove Theorem \ref{thm1}-(i) and Theorem \ref{thm2}-(i) and we give the complete proof of Theorem \ref{thm1}-(ii) in Section \ref{subcritical}. Finally, in Section \ref{building} we show some probabilistic results on the behavior of collisions in the critical case which is crucial for the proof of Theorem \ref{thm2}-(ii) in the critical case that is presented in Section \ref{critical}.

\section{The particle system and some basic properties}\label{basicproperties}

This section is devoted to proving the following preliminary results about $KS$-processes.
\begin{lemma}\label{ff}
Let $N\geq 2$, $\theta>0$ and $(\cF_t)_{t\ge 0} $ be a filtration. If $\tau$ is a stopping time for the filtration $(\cF_t)_{t\ge 0} $, then for all $KS(\theta,N)$-process $(X^{i,N}_t)_{t\in [0,\tau), i\in \ig 1,N \id}$ on $[0,\tau)$, for all sequence of increasing $(\cF_t)_{t\ge 0}$-stopping times $(\tau_n)_{n\in \nn }$ such that $\lim_{n\to \infty}\tau_n = \tau$ a.s., all $t\ge 0$, all $n\in \nn$, we have the following identities.

\vip

(i) For all $i \in \ig 1,N\id$, all $\varphi \in C^2(\rr_+)$,
\begin{align*}
\varphi (\| X^{i,N}_{t\wedge \tau_n} \|^2) =& \varphi ( \|X^{i,N}_0 \|^2 ) + 2\int_0^{t\wedge \tau_n} \varphi ' (\|X^{i,N}_s\|^2) X^{i,N}_s \cdot \dd B^i_s \\
&+ 2 \int_0^{t\wedge \tau_n} [\varphi ' (\|X^{i,N}_s\|^2) + \|X^{i,N}_s\|^2 \varphi ''( \|X^{i,N}_s\|^2)] \dd s \nonumber \\
&+\!\! \frac{2\theta}{N}\!\! \int_0^{t\wedge \tau_n}\!\! \sum_{j=1 }^N K(X^{i,N}_s-X^{j,N}_s)\! \cdot\! X^{j,N}_s\! \varphi '(\| X^{j,N}_s\|^2)\dd s.
\end{align*}
(ii) For all $i,j\in \ig 1,N \id $ such that $i\ne j$, all $\varphi \in C^2(\rr_+)$,
\begin{align*}
\varphi ( \|X^{i,N}_{t\wedge \tau_n}-X^{j,N}_{t\wedge \tau_n}\|^2 ) =& \varphi ( \|X^{i,N}_0-X^{j,N}_0 \|^2) + M_t^{N,n} + I^{N,n}_t + \frac{2\theta}N J^{N,n}_t,
\end{align*}
where 
\begin{gather*}
 M^{N,n}_t =  2\int_0^{t\wedge \tau_n} \varphi '(\|X^{i,N}_s-X^{j,N}_s\|^2)(X^{i,N}_s-X^{j,N}_s) \cdot \dd ( B^i_s-B^j_s), \\
I^{N,n}_t = 4\int_0^{t\wedge \tau_n} \Big[\varphi '(\| X^{i,N}_s-X^{j,N}_s\|^2) +\| X^{i,N}_s-X^{j,N}_s\|^2 \varphi ''(\| X^{i,N}_s-X^{j,N}_s\|^2)\Big]   \dd s, \nonumber 
\end{gather*}
and
\begin{align*}
J^{N,n}_t \!\!= \!\!\int_0^{t\wedge \tau_n} \!\!\!\sum_{k=1}^N (K(X^{i,N}_s-X^{k,N}_s)\! &+\! K(X^{k,N}_s-X^{j,N}_s))\\
&\cdot (X^{i,N}_s-X^{j,N}_s)\varphi ' ( \|X^{i,N}_s-X^{j,N}_s \|^2) \dd s.
\end{align*}
(iii) Setting $\mu^N_t = N^{-1} \sum_{i=1}^N \delta_{X^{i,N}_t}$ for all $t\in [0,T)$, we have all $\varphi \in C^2(\rr^2)$,
\begin{align*}
\int_{\rr^2} \varphi (x) \mu^{N}_{t\wedge \tau_n}(\dd x)=& \int_{\rr^2} \varphi (x) \mu^N_0(\dd x) + \frac{1}{N} \int_0^{t\wedge \tau_n} \sum_{i=1}^N\nabla \varphi (X^{i,N}_s)\cdot\dd B^i_s\\
& + \frac12 \int_0^{t\wedge \tau_n} \int_{\rr^2} \Delta \varphi (x) \mu^{N}_s(\dd x) \dd s \nonumber \\
&+ \frac\theta 2 \int_0^{t\wedge \tau_n} \int_{\rr^2} \int_{\rr^2}  K(x-y)\cdot [\nabla \varphi (x) - \nabla \varphi (y)] \mu ^{N}_s(\dd x) \mu^{N}_s(\dd y) \dd s  .
\end{align*}
\end{lemma}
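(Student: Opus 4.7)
All three formulas follow from direct applications of It\^o's formula on $[0,t\wedge\tau_n]$. This is legitimate because, by the very definition of a $KS(\theta,N)$-process on $[0,\tau)$, \eqref{EDS} holds on each $[0,\tau_n]$ and the singular drift is integrable there: $\int_0^{t\wedge\tau_n}\|K(X^{1,N}_s-X^{2,N}_s)\|\,ds<\infty$ almost surely, as supplied by Theorem~\ref{fj1} (and by \cite[Theorem~7]{fj} in the setting of Theorem~\ref{thm1}). Combined with exchangeability this is enough to invoke It\^o and Fubini without worrying about the singularity of $K$ at $0$. The antisymmetry $K(x-y)=-K(y-x)$ is what then produces the symmetric presentations of the drifts displayed in the statement.

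For (i), I would compose $F:x\mapsto\varphi(\|x\|^2)$ with $X^{i,N}_t$, using $\nabla F(x)=2\varphi'(\|x\|^2)\,x$ and, in $\rr^2$, $\Delta F(x)=4\varphi'(\|x\|^2)+4\|x\|^2\varphi''(\|x\|^2)$. The $dB^i$ part of \eqref{EDS} gives the stated stochastic integral, the $\tfrac12\Delta F$ correction gives the announced $2[\varphi'+\|x\|^2\varphi'']\,ds$ integrand, and the drift piece of \eqref{EDS} produces the interaction term, the antisymmetry of $K$ being used to put it in the symmetric shape displayed. For (ii), I would write $Y_t:=X^{i,N}_t-X^{j,N}_t$; then $Y$ is driven by $B^i-B^j$, whose quadratic covariation is $2\,\mathrm{Id}\,dt$, which doubles both the It\^o correction (giving the factor $4$ in $I^{N,n}$) and the drift prefactor (giving $\tfrac{2\theta}{N}$ in front of $J^{N,n}$). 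The drift of $Y_t$ itself is $\tfrac\theta N\sum_k[K(X^{i,N}_s-X^{k,N}_s)-K(X^{j,N}_s-X^{k,N}_s)]\,ds$, and the identity $-K(X^{j,N}_s-X^{k,N}_s)=K(X^{k,N}_s-X^{j,N}_s)$ produces precisely the integrand of $J^{N,n}$.

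For (iii), I would apply It\^o to $\varphi(X^{i,N}_t)$ for an arbitrary $\varphi\in C^2(\rr^2)$, sum over $i\in\ig1,N\id$, and divide by $N$. The martingale parts assemble into the announced stochastic integral, the trace correction gives the stated $\tfrac12\int\Delta\varphi\,d\mu^N_s\,ds$, and the drift term reads $\tfrac\theta{N^2}\sum_{i,j}\nabla\varphi(X^{i,N}_s)\cdot K(X^{i,N}_s-X^{j,N}_s)$. Swapping the dummy indices $(i,j)\leftrightarrow(j,i)$ and using antisymmetry of $K$ converts this into $\tfrac{\theta}{2N^2}\sum_{i,j}K(X^{i,N}_s-X^{j,N}_s)\cdot[\nabla\varphi(X^{i,N}_s)-\nabla\varphi(X^{j,N}_s)]$, i.e.\ exactly $\tfrac\theta2\iint K(x-y)\cdot[\nabla\varphi(x)-\nabla\varphi(y)]\mu^N_s(dx)\mu^N_s(dy)$, the diagonal $i=j$ contributing nothing since $K(0)=0$. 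The only real difficulty anywhere is ensuring the singular double integrals are finite on $[0,t\wedge\tau_n]$, which is precisely what the integrability bound cited above provides; beyond that the argument is a routine symmetrisation.
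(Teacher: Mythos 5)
Parts (ii) and (iii) of your argument are correct and coincide with the paper's proof: in (ii) you apply It\^o to $\varphi(\|X^{i,N}_t-X^{j,N}_t\|^2)$ directly (no symmetrisation is needed there), and in (iii) you sum It\^o's formula over $i$ and symmetrise the double sum by swapping the dummy indices and using $K(-x)=-K(x)$, the diagonal vanishing since $K(0)=0$; the justification of It\^o via the integrability of the drift on $[0,t\wedge\tau_n]$ is also in line with the paper. (A small slip in (ii): the factor $2$ in $\frac{2\theta}{N}J^{N,n}_t$ comes from the chain rule $\nabla\big(\varphi(\|\cdot\|^2)\big)(x)=2x\varphi'(\|x\|^2)$ together with the drift of $X^{i,N}-X^{j,N}$, not from the quadratic variation of $B^i-B^j$, which only accounts for the passage from $2[\varphi'+r\varphi'']$ to $4[\varphi'+r\varphi'']$ in $I^{N,n}_t$.)

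For (i), however, your justification of the interaction term has a genuine gap. For a fixed index $i$, It\^o's formula yields the drift contribution $\frac{2\theta}{N}\int_0^{t\wedge\tau_n}\sum_{j=1}^N K(X^{i,N}_s-X^{j,N}_s)\cdot X^{i,N}_s\varphi'(\|X^{i,N}_s\|^2)\,\dd s$, and the antisymmetry of $K$ alone cannot rewrite this in the displayed symmetric form $\frac{\theta}{N}\int_0^{t\wedge\tau_n}\sum_j K(X^{i,N}_s-X^{j,N}_s)\cdot[X^{i,N}_s\varphi'(\|X^{i,N}_s\|^2)-X^{j,N}_s\varphi'(\|X^{j,N}_s\|^2)]\,\dd s$: since $i$ is not summed over, there is no dummy index to exchange, and the two expressions differ pathwise in general (already for $N=2$ the claimed rewriting would force $K(X^{1,N}-X^{2,N})\cdot[X^{1,N}\varphi'(\|X^{1,N}\|^2)+X^{2,N}\varphi'(\|X^{2,N}\|^2)]=0$, which has no reason to hold). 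The missing ingredient — what the paper's proof calls ``a symmetry argument'', as opposed to (ii) where it is not needed — is the exchangeability of $(X^{1,N},\dots,X^{N,N})$: for $j\ne i$, exchanging the labels $i$ and $j$ and using $K(-x)=-K(x)$ gives $\E[K(X^{i,N}_s-X^{j,N}_s)\cdot X^{i,N}_s\varphi'(\|X^{i,N}_s\|^2)]=-\E[K(X^{i,N}_s-X^{j,N}_s)\cdot X^{j,N}_s\varphi'(\|X^{j,N}_s\|^2)]$, so that the unsymmetrised and symmetrised drifts agree after taking expectations (equivalently, in law), which is how (i) is exploited later (Proposition \ref{ascolici}, Lemma \ref{compactfini}). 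Your proposal should either state the It\^o output in its unsymmetrised form, or add this exchangeability step; symmetrisation by antisymmetry of $K$ alone is only available when both indices are summed, as in your treatment of (iii).
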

\begin{proof}
First, (i) follows directly from the It\^ o formula, knowing that setting $\psi : x \mapsto \varphi (\|x\|^2)$, we have $\nabla \psi (x) = 2x \varphi '(\|x\|^2)$ and $\Delta \psi (x) = 4 \Big( \varphi '(\|x\|^2) +  \|x\|^2 \varphi ''(\|x\|^2)\Big)$. Moreover (ii) is true for the same reason so it only remains to show (iii).

\vip

 We fix $\varphi \in C^2(\rr^2)$. Applying the It\^ o formula to $\varphi (X^{i,N}_{t\wedge \tau_n})$ and summing over $i \in \ig 1,N \id$, we get
\begin{align}\label{aaa}
\frac1N \sum_{i=1}^N\varphi ( X^{i,N}_{t\wedge \tau_n}) =& \frac{1}{N} \sum_{i=1}^N \varphi ( X^{i,N}_0) + \frac{1}N \sum_{i=1}^N \int_0^{t\wedge \tau_n} \nabla \varphi (X^{i,N}_s)\cdot \dd B^i_s\\
& + \frac1{2N} \sum_{i=1}^N \int_0^{t\wedge \tau_n}  \Delta \varphi (X^{i,N}_{s})  \dd s \nonumber \\
&+ \frac\theta {N^2} \int_0^{t\wedge \tau_n} \sum_{i,j =1}^N K(X^{i,N}_{s}-X^{j,N}_{s})\cdot \nabla \varphi (X^{i,N}_{s})  \dd s. \nonumber
\end{align}
By symmetrizing, we get that for all $s\in [0,t\wedge \tau_n]$,
$$
 \sum_{i,j =1}^N K(X^{i,N}_{s}-X^{j,N}_{s})\cdot \nabla \varphi (X^{i,N}_{s}) = \frac12  \sum_{i,j =1}^N K(X^{i,N}_{s}-X^{j,N}_{s})\cdot [\nabla \varphi (X^{i,N}_{s})-\nabla \varphi (X^{i,N}_{s})],
$$
which together with \eqref{aaa} imply the result.
\end{proof}

\section{Compactness}\label{sectcompact}

Only in this section, $\theta$ will take any positive values, without any other restriction.
We consider $f_0\in \cP (\rr^2)$. We consider also for all $N\ge 5$, $F^N_0 \in \cP_{sym,1}^* ((\rr^2)^N)$ with associated random variable $(X^{1,N}_0,\dots, X^{N,N}_0)$ such that $ \mu_0^N $ goes weakly to $ f_0$ in probability as $N\to \infty$ where $\mu_0^N = N^{-1}\sum_{i=1}^N \delta_{X^{i,N}_0}$. Applying Theorem \ref{fj1}, there exists a $KS(\theta,N)$-process on $[0,\tau_3^N)$ with initial law $F^N_0$ which is denoted $(X^{i,N}_t)_{t\in [0,\tau_3^N), i\in \ig 1,N\id}$. We recall that $N_0= (1+\lceil 2/(2-\theta)\rceil)\lor 5$ 
if $\theta \in (0,2)$ and $N_0=5$ if $\theta\geq 2$.
We define the empirical measure for all $N\ge N_0$ and all $t\geq 0$
$$
\mu^{N,\beta_N}_t = \mu^N_{t\land \beta_N}=\frac{1}{N}\sum_{i=1}^N \delta_{X^{i,N}_{t\land \beta_N}} ,
$$
where we set 
$$
\beta_N = \infty \quad \hbox{if}\quad \theta\in (0,2) \qquad \hbox{and}\qquad \beta_N = \tau_3^{N,\ell_N}
\quad \hbox{if}\quad \theta\geq 2,
$$
with $(\ell_N)_{N\ge 5}$ some increasing sequence of integers. 
It is clear that $(\mu^{N,\beta_N}_t)_{t\ge 0} \in C([0,\infty),\cP(\rr^2))$ a.s. 
because $(X^{i,N}_t)_{t\in [0,\beta_N), i\in \ig 1,N \id}$ is a.s. continuous.   
This section is devoted to showing the following Theorem \ref{thm1}-(i) and Theorem \ref{thm2}-(i). To this end, we need to prove some preliminary estimates.
\begin{prop}\label{ascolici}
(i) There exists some nondecreasing $\psi \in C(\rr_+)$ such that $\psi (0) = 1$, 
$\psi (2r) \le C\psi (r)$ for some constant $C>0$, $\lim_{r\to \infty } \psi (r) = \infty$ and 
\begin{align*}
\mbox{ for all } T\ge 0, \qquad 
M_T:=\sup_{N\ge N_0} \E \Big[ \sup_{t\in [0,T]}\frac{1}{N}\sum_{i=1}^N\psi(\|X_{t\wedge\beta_N}^{i,N}\|^2) 
\Big] <\infty .
\end{align*}
\vip
(ii) For all $T>0$, $\e>0$, there exists $K_{\e,T}$ a compact set of $\cP (\rr^2)$ such that, 
$$
\mbox{ for all } N\ge N_0, \quad \PP( \forall t\in [0,T], \mu^{N,\beta_N}_t \in K_{\e,T} ) \ge 1-\e.
$$
\end{prop}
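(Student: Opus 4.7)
The plan is to establish (i) by a single-particle Itô estimate via Lemma \ref{ff}(i), and then to deduce (ii) from (i) by a Markov--Prokhorov argument. Since $\mu^N_0\to f_0$ weakly in probability and $F^N_0$ is exchangeable, the marginal law of $X^{1,N}_0$ converges weakly to $f_0$, so $\{X^{1,N}_0\}_{N\ge N_0}$ is tight on $\rr^2$. Applying the de la Vall\'ee--Poussin characterization of tight families, followed by a routine $C^2$-smoothing and slowing if needed, I produce a nondecreasing $\psi\in C^2(\rr_+)$ with $\psi(0)=1$, $\psi(r)\to\infty$, $\psi(2r)\le C\psi(r)$, and $\sup_{N\ge N_0}\E[\psi(\|X^{1,N}_0\|^2)]<\infty$, and satisfying the two It\^o-friendly conditions: (a) the vector field $\Phi(x):=x\psi'(\|x\|^2)$ is bounded on $\rr^2$ and globally $L$-Lipschitz; (b) $r\mapsto \psi'(r)+r\psi''(r)$ is bounded. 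A concrete choice compatible with (a)--(b) is anything dominated by $\log(e+r)$; if the Vall\'ee--Poussin function grows faster, one replaces it by its minimum with $\log(e+r)$ and smooths.

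The heart of (i) is to apply Lemma \ref{ff}(i) with $\varphi=\psi$ and pass to the limit along the localising stopping times $\tau_n\nearrow\beta_N$. This decomposes $\psi(\|X^{1,N}_{t\wedge\tau_n}\|^2)$ as the initial value plus a local martingale $M^{N,n}_t$ (with integrand $2\Phi(X^{1,N}_s)$, hence in $L^\infty$) plus a drift $\int_0^{t\wedge\tau_n}(A_s+B_s)\dd s$. By (b) the second-order term $A_s$ is pathwise bounded. The interaction term can be rewritten, using $K(x)=-x/\|x\|^2$, as
\[
B_s \,=\, -\frac{\theta}{N}\sum_{j=1}^N\frac{(X^{1,N}_s-X^{j,N}_s)\cdot\bigl[\Phi(X^{1,N}_s)-\Phi(X^{j,N}_s)\bigr]}{\|X^{1,N}_s-X^{j,N}_s\|^2},
\]
and the Lipschitz property (a) immediately yields $|B_s|\le \theta L$ pathwise, \emph{uniformly in $N$}. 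Doob's $L^2$ maximal inequality applied with the $L^\infty$ bound on $\Phi$ gives $\E[\sup_{t\le T}|M^{N,n}_t|]\le C\sqrt T$. Taking the supremum over $t\in[0,T]$ and expectation then yields a uniform-in-$(N,n)$ bound on $\E[\sup_{t\le T}\psi(\|X^{1,N}_{t\wedge\tau_n}\|^2)]$. Monotone convergence in $n$ (since $\sup_{t\le T}\psi(\|X^{1,N}_{t\wedge\tau_n}\|^2)=\sup_{s\le T\wedge\tau_n}\psi(\|X^{1,N}_s\|^2)$ is nondecreasing in $n$) upgrades this to the bound with $\beta_N$, proving (i).

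For (ii), the set $K_M:=\{\nu\in\cP(\rr^2):\int\psi(\|x\|^2)\dd\nu\le M\}$ is compact in $\cP(\rr^2)$ by Prokhorov's theorem since $\psi(\|x\|^2)\to\infty$ as $\|x\|\to\infty$. By Markov's inequality, together with the elementary estimate $\sup_t\frac{1}{N}\sum_i f^i_t\le\frac{1}{N}\sum_i\sup_t f^i_t$ and the exchangeability of the particle system,
\[
\PP\bigl(\exists t\in[0,T],\ \mu^{N,\beta_N}_t\notin K_M\bigr) \,\le\, \frac{1}{M}\E\Big[\sup_{t\le T}\psi(\|X^{1,N}_{t\wedge\beta_N}\|^2)\Big] \,\le\, \frac{C}{M}
\]
by part (i); choosing $M=C/\e$ and $K_{\e,T}:=K_M$ concludes. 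The main obstacle is the \emph{simultaneous} construction of a single $\psi$ that both controls the initial moment (which demands slow growth because no moment assumption is available on $f_0$) and makes $\Phi$ Lipschitz; once this construction is done carefully, the rest is a routine combination of pathwise bounds and a martingale estimate.
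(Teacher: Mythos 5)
Your proof is correct, and part (i) follows essentially the paper's own route: tightness of $X^{1,N}_0$ from the convergence of $\mu^N_0$, a de la Vall\'ee Poussin function slowed down so that $\Phi(x)=x\psi'(\|x\|^2)$ is bounded and Lipschitz and $\psi'+r\psi''$ is bounded, then Lemma \ref{ff}-(i) with the interaction term killed pathwise by the Lipschitz bound (this is exactly the paper's ``$g$ has bounded differential'' step, since $\|K(x-y)\|\le\|x-y\|^{-1}$) and the martingale handled by Doob/Cauchy--Schwarz; your explicit monotone-convergence passage along $\tau_n\nearrow\beta_N$ is if anything more careful than the paper's. Where you genuinely diverge is part (ii): you take $K_M=\{\nu:\int\psi(\|x\|^2)\nu(\dd x)\le M\}$ and conclude by Markov plus $\sup_t N^{-1}\sum_i\le N^{-1}\sum_i\sup_t$ and exchangeability, whereas the paper forms the auxiliary empirical measure $\nu^{N,T}=N^{-1}\sum_i\delta_{(\sup_{t\le T\wedge\beta_N}\|X^{i,N}_t\|,0)}$, invokes Sznitman's Proposition 2.2 for its tightness, and builds compact sets as countable intersections of ball-mass constraints. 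Your variant is more elementary and avoids the external compactness criterion, at the price of two small points you should make explicit: (1) $K_M$ is compact because it is tight \emph{and} closed, the latter since $\nu\mapsto\int\psi(\|x\|^2)\nu(\dd x)$ is weakly lower semicontinuous (Prokhorov alone only gives relative compactness); (2) the recipe ``take the minimum with $\log(e+r)$ and smooth'' does not by itself yield the derivative bounds (a)--(b), since on the region where the minimum equals the de la Vall\'ee Poussin function its slope is uncontrolled; one must actually construct a $C^2$, nondecreasing, slowly varying minorant-type function below (a constant times) it that still tends to infinity. This is the same classical fact the paper also invokes without proof, so it is a presentational imprecision rather than a gap.
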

\begin{proof}
We divide the proof in three steps.
\vip
{\it Step 1.} We first show that $(\|X^{1,N}_0\|)_{N\ge N_0}$ is tight. Since 
$\mu^N_0$ goes weakly to $f_0$ in probability, we conclude that 
$\limsup_{N\to \infty} \E[\mu^N_0(\{ x \in \rr^2 : ||x||\geq K\}] \leq f_0(\{x \in \rr^2 : ||x||\geq K\})$
for all $K\geq 1$.
Since it holds that $\E[\mu^N_0(\{ x \in \rr^2 : ||x||\geq K\}]=\PP(||X^{1,N}_0||\geq K)$, we conclude that
$\limsup_{K\to \infty} \limsup_{N\to \infty}\PP(||X^{1,N}_0||\geq K)=0$.

\vip

{\it Step 2.}
We conclude the proof of (i). Since $(X^{1,N}_0)_{N\ge N_0}$ is tight, by Lemma \ref{vallepoussin} there exists some nondecreasing $\psi \in C^2 (\rr_+)$ such that $\lim_{r\to \infty} \psi (r) = \infty $, $\psi (0) =1$, 
$\psi (2r)\le C \psi (r)$ and $(1+r)|\psi'(r)| + r|\psi''(r)|\le C$ for some constant $C>0$, and such that
\begin{align}\label{cifini}
\sup_{N\ge N_0} \E [\psi (\|X^{1,N}_0\|^2)] < \infty.
\end{align}
 We fix $T>0$. Applying Lemma \ref{ff}-(iii) and taking the supremum we get,
\begin{align}\label{tttt}
\sup_{t\in [0,T]}\frac1N \sum_{i=1}^N\psi ( \|X^{i,N}_{t\wedge \beta_N}\|^2 ) \le & \frac1N \sum_{i=1}^N\psi ( \|X^{i,N}_0\|^2 )+ I^1_T + I^2_T + I^3_T,
\end{align}
where
\begin{gather*}
I^1_T = \frac2N \sum_{i=1}^N \int_0^{T\wedge \beta_N} 
(|\psi ' (\|X^{i,N}_s\|^2)| + \|X^{i,N}_s\|^2 |\psi ''( \|X^{i,N}_s\|^2) )|\dd s, \\
I^2_T = 2\sup_{t\in [0,T]} \Big(\frac1N \sum_{i=1}^N \int_0^{t\wedge \beta_N} \psi'(\|X^{i,N}_s\|^2) X^{i,N}_s \cdot \dd B^{i}_s \Big),\\
I^3_T = \frac{\theta}{N^2} \sup_{t\in [0,T]}\int_0^{t\wedge \beta_N} \!\sum_{i,j=1}^N K(X^{i,N}_s-X^{j,N}_s) 
\cdot [X^{i,N}_s \psi '(\| X^{i,N}_s\|^2)-X^{j,N}_s \psi '(\| X^{j,N}_s\|^2)] \dd s.
\end{gather*}
where $I^3_T$ was obtained thanks to a symmetrization argument.
Thanks to the hypothesis we made on $\psi$, we have $I^1_T \le 2 C (T\wedge \beta_N)$ a.s. Moreover, since $||K(x)||\leq ||x||^{-1}$, and since $g : x \in \rr^2 \mapsto x \psi' (\|x\|^2)$ is differentiable with a bounded differential according to the properties of $\psi$, we get that $I^3_T \le \theta \|\nabla g\|_{\infty} (T\wedge \beta_N)$ a.s. Finally, using the Cauchy-Schwarz and the Doob inequalities, we find that there exists a universal constant $ C'>0$, of which the value is allowed to vary, such that 
$$
\E[I^2_T] \le C' \Big(\frac1{N^2} \sum_{i=1}^N\E \Big[ \int_0^{T\wedge \beta_N} (\psi'(\|X^{i,N}_s\|^2))^2 \|X^{i,N}_s\|^2 \dd s \Big]\Big)^{1/2} \le C' T^{1/2},
$$ where we used the properties of $\psi$.
This implies, by taking the expectation in \eqref{tttt} that 
\begin{align*}
\E\Big[\sup_{t\in [0,T]}\frac1N \sum_{i=1}^N\psi ( \|X^{i,N}_{t\wedge \beta_N}\|^2 )\Big] \le & \E\Big[\sup_{N\ge N_0} \psi ( \|X^{1,N}_0\|^2 )\Big] + (2 C +\theta \|\nabla g\|_{\infty})T + C' T^{1/2}.
\end{align*}
Combining with \eqref{cifini}, we get the result.

\vip

{\it Step 3.} We fix $T>0$ and $\e>0$ and, for some positive sequences 
$(A^\e_k)_{k\ge 0}$ and $(\eta^\e_k)_{k\ge 0}$ such that $\lim_{k\to \infty} A^\e_k=\infty$ and 
$\lim_{k\to \infty} \eta^\e_k=0$, to be chosen later, we introduce the compact
$$
K_{\e,T}=\bigcap_{k\geq 0} \Big\{\mu \in \cP(\rr^2) : \mu(B(0,A_k^\e)^c)\leq\eta^\e_k\Big\}
$$
of $\cP(\rr^2)$. Now for $N\geq N_0$,
\begin{align*}
\PP ( \exists t \in [0,T] : \mu^{N,\beta_N}_t \notin K_{\e,T}) 
&\le \sum_{k\ge 0} \PP \Big( \exists t\le T, \mu_t^{N,\beta_N}(B(0,A^\e_k)^c)>\eta^\e_k \Big)\\
&= \sum_{k\ge 0} \PP \Big( \exists t\le T,\frac1N \sum_{i=1}^N \indiq_{\|X^{i,N}_{t\wedge \beta_N}\| > A^\e_k} > \eta^\e_k \Big)\\
&\le \sum_{k\ge 0} \PP \Big( \exists t\le T,\frac1N \sum_{i=1}^N \indiq_{\psi (\|X^{i,N}_{t\wedge \beta_N}\|^2) > \psi ((A^\e_k)^2)} > \eta^\e_k \Big)\\
&\le \sum_{k\ge 0} \PP \Big( \exists t\le T,\frac1N \sum_{i=1}^N \psi (\|X^{i,N}_{t\wedge \beta_N}\|^2)  > \eta^\e_k \psi ( (A^\e_k)^2) \Big)\\
&\le \sum_{k\ge 0}\frac{\E \Big(\sup_{t\in [0,T]} \frac1N \sum_{i=1}^N \psi (\|X^{i,N}_{t\wedge \beta_N}\|^2) \Big) }{\eta^\e_k \psi ( (A^\e_k)^2)}\\
&\leq  M_T \sum_{k\ge 0}\frac1{\eta^\e_k \psi ( (A^\e_k)^2)}
\end{align*} 
by (i). We now choose $A^\e_k$ such that $\sum_{k\ge 0} 1/\sqrt{ \psi ( (A^\e_k)^2)} \le \e/M_T$ and 
$\eta^\e_k = 1/ \sqrt{ \psi ( (A^\e_k)^2)}$. We find that 
$\PP ( \exists t \in [0,T] : \mu^{N,\beta_N}_t \notin K_{\e,T}) \leq \e$ as desired.

\end{proof}
\begin{proof}[Proof of Theorem \ref{thm1}-(i) and Theorem \ref{thm2}-(i)]
We recall that one can find a sequence $(\varphi_n)_{n\in \nn}$ of elements of $C^2(\rr^2)$ such that 
$$
\|\varphi_n\|_\infty + \|\nabla \varphi_n\|_\infty + \|\nabla^2\varphi_n\|_\infty \le 1 \quad \mbox{ for all } n\ge 0,
$$
and such that the distance $\delta$ defined for all $f$, $g\in \cP (\rr^2)$ by
$$
\delta (f,g) = \sum_{n\ge 0} 2^{-n} \Big|\int \varphi_n (x) f (\dd x) - \int \varphi_n (x) g (\dd x) \Big|,
$$
metrizes the weak convergence topology in $\cP (\rr^2)$. Since $C([0,\infty),\cP (\rr^2))$ is endowed with the topology of the uniform convergence on compact sets, it is sufficient to prove the tightness of $((\mu_t^N)_{t\ge 0})_{N\ge N_0}$ on $C([0,T],\cP (\rr^2))$ for every $T>0$. We fix $T>0$ and set the following compact set of $C([0,T], \cP (\rr^2))$,
 $$
\cK _{A,K} := \Big\{ (f_t)_{t\ge 0} \in C([0,T],\cP(\rr^2)) :\forall t \in [0,T], f_t\in K \mbox{ and }  \sup_{\substack{s,t \in [0,T]\\ s\ne t}}\frac{\delta (f(t),f(s))}{|t-s|^{1/4}} \le A\Big\},
$$
for all $A>0$ and for all $K$ compact set of $\cP (\rr^2)$. It suffices to show
\begin{align}\label{cond}
\sup_{N\ge N_0}\E \Big[\sup_{\substack{s, t \in [0,T]\\ s\ne t}} \frac{\delta (\mu^{N,\beta_N}_t,\mu^{N,\beta_N}_s)}{|t-s|^{1/4}}\Big]< \infty.
\end{align}
Indeed, if it is the case, thanks to the Markov inequality, for all $\e >0$, there exists $A_\e >0$ such that 
$$
\mbox{ for all } N\ge N_0, \quad \PP \Big( \sup_{\substack{s,t \in [0,T]\\ s\ne t}}\frac{\delta (\mu^{N,\beta_N}_t,\mu^{N,\beta_N}_s)}{|t-s|^{1/4}} \le A_\e \Big)\ge 1-\e,
$$ 
and using $(K_{\e,T})_{\e >0}$ defined in Proposition \ref{ascolici}-(ii), we have for all $N\ge N_0$,
$$
\PP ( (\mu^{N,\beta_N}_t)_{t\ge 0} \in \cK_{A_\e,K_{\e,T}}) \ge 2(1-\e) - 1 =1-2\e,
$$
which ends the proof. Let us show \eqref{cond}. Using Lemma \ref{ff}-(iii) and the fact that $\|K(x)\| \le \|x\|^{-1}$ for $x\ne 0$, we have for all $n\ge 0$, all $T>0$, all $s,t\in [0,T]$ such that $s\ne t$,

\begin{align*}
\frac1N\Big|\sum_{i=1}^N (\varphi_n (X^{i,N}_{t\wedge \beta_N})- \varphi_n (X^{i,N}_{s\wedge \beta_N}))\Big|&\le S^1 + S^2,
\end{align*}
where
\begin{gather*}
S^1 = \frac1N \Big|\sum_{i=1}^N\int_{s\wedge \beta_N}^{t\wedge \beta_N} \nabla \varphi_n (X^{i,N}_u)\dd B^{i}_u\Big|,\\
S^2 = \! \frac1N \sum_{i=1}^N\int_{s\wedge \beta_N}^{t\wedge \beta_N} \frac12 |\Delta \varphi_n (X^{i,N}_u)|\dd u +\!\!\frac\theta {2N} \sum_{\substack{i,j=1\\j\ne i}}^N \int_{s\wedge \beta_N}^{t\wedge \beta_N} \frac{\|\nabla \varphi_n(X^{i,N}_u) -\nabla \varphi_n(X^{j,N}_u)\|}{\|X^{i,N}_u-X^{j,N}_u\|} \dd u .
\end{gather*}
Since $\nabla^2 \varphi_n$ is bounded, $S^2$ is bounded by $C(t\wedge \beta_N-s\wedge \beta_N) \le C(t-s)$ with 
$C>0$ a constant. Moreover, by the Dubins-Schwarz theorem, even if it means to enlarge the probability space, there exists a Brownian motion $(\tilde{B}_t)_{t\ge 0}$ such that 
$$
\sum_{i=1}^N\int_0^{t\wedge \beta_N} \nabla \varphi_n(X^{i,N}_u) \dd B^i_u = \tilde B _{\int_0^{t\wedge \beta_N}\sum_{i=1}^N\|\nabla \varphi_n(X^{i,N}_u)\|^2\dd u }.
$$
Recalling that $||\nabla \varphi_n||_\infty \leq 1$ and setting $C_{\tilde B} : =\sup_{u,v\in [0,T], u\ne v} (\tilde B_u-\tilde B_v)/|u-v|^{1/4}$, this implies that 
\begin{align*}
S^1 &\le N^{-1}C_{\tilde B} \Big(\sum_{i=1}^N \int_{s\wedge \beta_N}^{t\wedge \beta_N} \| \nabla \varphi_n( X^{i,N}_u)\|^2 \dd u\Big)^{1/4}\\
&\le C_{\tilde B}N^{-3/4}(t\wedge\beta_N-s\wedge \beta_N)^{1/4}\\
&\le C_{\tilde B} N^{-3/4}(t-s)^{1/4}.
\end{align*}

But $\E[C_{\tilde B}]<\infty$ thanks to the Kolmogorov criterion, 
so that recalling the definition of $\delta$,
\begin{align*}
\E\Big[\sup_{\substack{s,t\in [0,T]\\s\ne t}}\frac{\delta ( \mu^{N,\beta_N}_t, \mu^{N,\beta_N}_s )}{|t-s|^{1/4}} \Big]
&\le \sum_{n\geq 0} 2^{-n} \E\Big[\frac1N\Big|\sum_{i=1}^N(\varphi_n (X^{i,N}_{t\wedge \beta_N})- \varphi_n (X^{i,N}_{s\wedge \beta_N}))\Big|\Big]\\
&\le \E[2(CT^{3/4}+N^{-3/4}C_{\tilde B})].
\end{align*}
This proves \eqref{cond}. 
\end{proof}

\section{The subcritical case}\label{subcritical}

In this section, we prove Theorem \ref{thm1}-(ii). We recall that in this case $\theta \in (0,2)$. 
We begin with a simple geometrical result which is crucial for our purpose, that we proved in \cite{ft}
but that we recall for completeness.
\begin{lemma}\label{inegalitebarycentre}
For all pair of nonincreasing functions $\varphi, \psi : (0,\infty) \to \rr_+$, for all $X,Y,Z \in \rr^2\setminus \{0\} $ 
such that $X+Y+Z=0$, we have 
$$
\Delta = [ \varphi (\|X\|)X + \varphi (\|Y\|)Y + \varphi(\|Z\|)Z ] \cdot [\psi (\|X\|)X 
+ \psi (\|Y\|)Y + \psi(\|Z\|)Z ] \ge 0.
$$
More precisely, if e.g. $\|X\|\leq\|Y\|\leq\|Z\|$, we have
$$
\Delta \geq [\varphi (\|X\|)-\varphi (\|Y\|)][\psi(\|X\|)-\psi(\|Y\|)]\|X\|^2.
$$
\end{lemma}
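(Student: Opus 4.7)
The plan is to exploit the linear constraint $X+Y+Z=0$ to eliminate $Z$, and then decompose the dot product so that both the nonnegativity and the sharper bound drop out at once. Since $\Delta$ is invariant under permutations of $(X,Y,Z)$, I may relabel so that $\|X\|\le\|Y\|\le\|Z\|$. Writing $a=\varphi(\|X\|),\,b=\varphi(\|Y\|),\,c=\varphi(\|Z\|)$ and $a',b',c'$ for the corresponding values of $\psi$, monotonicity and nonnegativity give $a\ge b\ge c\ge 0$ and $a'\ge b'\ge c'\ge 0$, so that the differences $\alpha:=a-c,\ \beta:=b-c,\ \alpha':=a'-c',\ \beta':=b'-c'$ are all nonnegative with $\alpha\ge\beta$ and $\alpha'\ge\beta'$.

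Substituting $Z=-(X+Y)$ reduces each factor to a linear combination of $X$ and $Y$ alone:
\begin{gather*}
\varphi(\|X\|)X+\varphi(\|Y\|)Y+\varphi(\|Z\|)Z = \alpha X+\beta Y,\\
\psi(\|X\|)X+\psi(\|Y\|)Y+\psi(\|Z\|)Z = \alpha' X+\beta' Y,
\end{gather*}
and hence $\Delta = \alpha\alpha'\|X\|^2+\beta\beta'\|Y\|^2+(\alpha\beta'+\beta\alpha')\,X\cdot Y$.

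The heart of the argument is the algebraic identity
\begin{align*}
\Delta-(\alpha-\beta)(\alpha'-\beta')\|X\|^2 \;=\; (\alpha\beta'+\beta\alpha')\bigl(\|X\|^2+X\cdot Y\bigr)+\beta\beta'\bigl(\|Y\|^2-\|X\|^2\bigr),
\end{align*}
which I would obtain by direct expansion. Both terms on the right are manifestly nonnegative: the second one by the ordering $\|X\|\le\|Y\|$; the first one because $\|X\|^2+X\cdot Y=X\cdot(X+Y)=-X\cdot Z$, while polarization together with $X+Z=-Y$ gives $2X\cdot Z=\|Y\|^2-\|X\|^2-\|Z\|^2\le 0$ under the ordering $\|Y\|\le\|Z\|$. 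This simultaneously yields the sharp bound
\[
\Delta\ge(a-b)(a'-b')\|X\|^2=[\varphi(\|X\|)-\varphi(\|Y\|)][\psi(\|X\|)-\psi(\|Y\|)]\|X\|^2,
\]
and a fortiori the nonnegativity $\Delta\ge 0$.

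The main difficulty — more a matter of finding the right regrouping than a genuine obstacle — is spotting the decomposition above. A naive estimate of the cross term by $-(\alpha\beta'+\beta\alpha')\|X\|\|Y\|$ is too lossy to close the inequality, since $X\cdot Y$ can be negative when the triangle spanned by $X,Y,Z$ is obtuse. One must instead trade $X\cdot Y$ for $-X\cdot Z$ via the constraint, after which the chosen ordering $\|X\|\le\|Y\|\le\|Z\|$ pins down the signs.
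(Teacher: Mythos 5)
Your proof is correct. It follows the same basic strategy as the paper -- order the vectors, use $X+Y+Z=0$ to eliminate one of them, regroup the coefficients into nonnegative differences of values of $\varphi$ and $\psi$, and invoke the elementary geometric fact $X\cdot Z\le 0$ -- but with a different choice of which vector to eliminate, and this changes the amount of work needed. The paper eliminates the \emph{middle} vector $Y$, writing both factors as $\lambda X-\mu Z$ and $\lambda'X-\mu'Z$ with $\lambda=\varphi(\|X\|)-\varphi(\|Y\|)$, $\mu=\varphi(\|Y\|)-\varphi(\|Z\|)$ (and similarly for $\psi$); then $\Delta=\lambda\lambda'\|X\|^2+\mu\mu'\|Z\|^2-(\lambda\mu'+\lambda'\mu)X\cdot Z$, and since $X\cdot Z\le 0$ every term is already nonnegative, so both conclusions follow by simply dropping terms. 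You eliminate the \emph{largest} vector $Z$, which leaves the cross term $(\alpha\beta'+\beta\alpha')X\cdot Y$ of indeterminate sign, and you must compensate with your regrouping identity; note that this identity ultimately trades $\|X\|^2+X\cdot Y$ for $-X\cdot Z$, i.e.\ it reuses exactly the paper's geometric input, just one step later. So your route is a correct but slightly longer variant; the paper's choice of basis $(X,Z)$ is what makes the decomposition sign-definite term by term and renders the extra identity unnecessary.
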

\begin{proof}
We may study only the case where $\|X\| \le \|Y\| \le \|Z\|$. Since $Y=-X-Z$,
\begin{gather*}
\varphi (\|X\|)X + \varphi (\|Y\|)Y + \varphi(\|Z\|)Z = \lambda X - \mu Z, \\
\psi (\|X\|)X + \psi (\|Y\|)Y + \psi(\|Z\|)Z = \lambda' X - \mu' Z,
\end{gather*}
where $\lambda = \varphi (\|X\|)-\varphi (\|Y\|) \ge 0$, 
$\mu = \varphi (\|Y\|)-\varphi (\|Z\|) \ge 0$, 
$\lambda' = \psi (\|X\|)-\psi (\|Y\|) \ge 0$ and $\mu' = \psi (\|Y\|)-\psi (\|Z\|) \ge 0$. Therefore,
$$
\Delta = \lambda \lambda' \|X\|^2 + \mu \mu' \|Z\|^2 
- (\lambda \mu' + \lambda' \mu)X\cdot Z\geq \lambda \lambda' \|X\|^2
$$
as desired, because $X\cdot Z\leq 0$. Indeed,
if $X\cdot Z >0$, then
$\|Y\|^2=\|Z+X\|^2 = \|Z\|^2 + \|X\|^2 + 2X\cdot Z > \|Z\|^2\geq \|Y\|^2$, which is absurd. 
\end{proof}

 The next result follows the proof of \cite[Proposition 5]{ft}.
\begin{prop}\label{estimgamma}
For all $\gamma \in (\theta,2)$, there exists a constant $C_{\theta,\gamma} >0$ such that for all $t\ge 0$,
\begin{align*}
\sup_{N\ge N_0}\E \Big[ \int_0^{t}  \|X^{1,N}_s-X^{2,N}_s\|^{\gamma-2} \dd s\Big]\le 
C_{\theta,\gamma} (1+t).
\end{align*}
\end{prop}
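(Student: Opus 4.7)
Following the ``Main a priori estimate'' heuristic from the introduction, the plan is to apply It\^{o}'s formula in the form of Lemma \ref{ff}(ii) to $\varphi(\|X^{1,N}_t - X^{2,N}_t\|^2)$, where $\varphi \in C^2(\rr_+)$ is a regularization of $u \mapsto u^{\gamma/2}\wedge 1$: nonnegative, bounded by $1$, nondecreasing, equal to $u^{\gamma/2}$ on a neighborhood of $0$ and equal to $1$ for $u \geq 2$ (so that $\varphi'$ is supported on $[0,2]$, which is what produces the indicators $\indiq_{\|\cdot\|\leq 1}$ below). Since $N \geq N_0$ implies $\theta \leq 2(N-2)/(N-1)$, Theorem \ref{fj1} yields $\tau_3^N = \infty$, so the particle system is globally defined; we use localizing stopping times increasing to $\infty$, and the martingale part vanishes in expectation.

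On the singular part where $\varphi(u) = u^{\gamma/2}$, the identity $4[\varphi'(u) + u\varphi''(u)] = \gamma^2 u^{\gamma/2 - 1}$ gives a diffusion integrand $\gamma^2 \|X^{1,N}_s - X^{2,N}_s\|^{\gamma - 2}$. In $J^{N,n}$, the terms $k\in\{1,2\}$ combine via $K(0)=0$ and $K(x)\cdot x = -1$ into $-(2\gamma\theta/N)\|X^{1,N}_s - X^{2,N}_s\|^{\gamma - 2}$, while the sum $k \geq 3$ reduces by exchangeability to $\frac{N-2}{3}\E[F(X^{1,N}_s,X^{2,N}_s,X^{3,N}_s)]$, with $F$ as in the heuristic. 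Lemma \ref{inegalitebarycentre} applied to the nonincreasing pair $r\mapsto 1/r^2$ and $r\mapsto r^{\gamma-2}\indiq_{r\leq 1}$ gives $F + G \geq 0$ pointwise, and a final use of exchangeability rewrites $\E[G]$ as $3\E[\|X^{1,N}_s - X^{2,N}_s\|^{\gamma-2}\indiq_{\|X^{1,N}_s - X^{2,N}_s\|\leq 1}]$. The three $N$-dependent prefactors then combine into the $N$-independent value
$$
\gamma^2 - \frac{2\gamma\theta}{N} - \frac{\gamma\theta(N-2)}{N} = \gamma(\gamma-\theta) > 0.
$$
Since $0 \leq \varphi \leq 1$, this yields
$$
\gamma(\gamma-\theta)\,\E\Big[\int_0^t \indiq_{\|X^{1,N}_s - X^{2,N}_s\|\leq 1}\,\|X^{1,N}_s - X^{2,N}_s\|^{\gamma - 2}\,\dd s\Big] \leq 1,
$$
and on $\{\|X^{1,N}_s - X^{2,N}_s\|>1\}$ the integrand is at most $1$ (because $\gamma < 2$), contributing at most $t$; the claim then follows with $C_{\theta,\gamma} = 1 + [\gamma(\gamma-\theta)]^{-1}$.

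The main obstacle is the rigorous application of It\^{o}'s formula, since $u \mapsto u^{\gamma/2}\wedge 1$ is not $C^2$ and, for $\gamma < 2$, $(u^{\gamma/2})'$ blows up at $0$. The cap at $u = 1$ is smoothed by a $C^2$ interpolation on $[1/2,2]$, and the singularity at the origin is handled by first working with $\varphi(u+\delta)$ and passing to $\delta\to 0^+$. The integrability needed to transfer the resulting inequality to the limit — in particular of $\|X^{1,N}_s - X^{2,N}_s\|^{\gamma-2}$ in the diffusion term and of $\|X^{1,N}_s - X^{3,N}_s\|^{-1}$ in the three-particle term — is provided by the a priori estimate $\E[\int_0^t \|X^{1,N}_s - X^{2,N}_s\|^{-1}\,\dd s] < \infty$ of Theorem \ref{fj1}, together with monotone/dominated convergence.
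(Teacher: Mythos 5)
Your proposal is correct and is essentially the paper's own proof: the paper implements the same heuristic with the single regularization $\varphi_a(r)=(r+a)^{\gamma/2}/(1+(r+a)^{\gamma/2})$ in place of your smoothed cap plus $\delta$-shift, and otherwise follows the identical steps (Lemma \ref{ff}-(ii), exchangeability reducing the interaction to three particles, Lemma \ref{inegalitebarycentre} for the sign, the combined coefficient $\gamma(\gamma-\theta)>0$, boundedness of the test function, and monotone convergence in the regularization parameter). The only detail to make explicit is that your cap-smoothing must be chosen concave, so that $\varphi'$ is nonincreasing and Lemma \ref{inegalitebarycentre} applies to the actual regularized integrand (not just the idealized pair $r\mapsto 1/r^2$, $r\mapsto r^{\gamma-2}\indiq_{r\le 1}$), exactly as the paper uses that $\varphi_a'$ is nonincreasing.
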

\begin{proof}
We fix $a>0$ and set $\varphi_a (r) =  (r+a)^{\gamma/2}/(1+(r+a)^{\gamma/2})$. We first show that
for some constant $C_1>0$, we have
\begin{align}\label{gggg}
\varphi_a'(r) + r \varphi_a''(r)\ge c\frac{\gamma}{2}\varphi_a'(r) - C_1, \qquad \hbox{where}\quad  
c= \frac 12\Big(1 + \frac\theta  \gamma\Big) \in \Big(\frac\theta\gamma,1\Big).
\end{align}
We start with
$$
\varphi_a'(r) = \frac\gamma 2 \frac{(r+a)^{\gamma/2-1}}{(1+(r+a)^{\gamma/2})^2}\,\, \mbox{ and }\,\, \varphi_a''(r)\! = \frac{\gamma}{2}\frac{(r+a)^{\gamma/2-2}}{(1+(r+a)^{\gamma/2})^2} \Big(\frac\gamma 2 -1 -\gamma \frac{(r+a)^{\gamma/2}}{1+(r+a)^{\gamma/2}}\Big).
$$
This implies since $a\ge (\gamma/2) a$,
\begin{align*}
\varphi_a'(r) + r \varphi_a''(r) &= \varphi_a'(r) \Big(   \frac{\frac{\gamma}{2}r+a}{r+a} - \gamma \frac{r(r+a)^{\gamma/2-1}}{1+(r+a)^{\gamma/2}}\Big)\\
&\ge \frac{\gamma} 2 \varphi_a'(r) \Big( 1 - 2\frac{r(r+a)^{\gamma/2-1}}{1+(r+a)^{\gamma/2}}\Big)\\
&\ge \frac{\gamma} 2 \varphi_a'(r) \Big( 1 - 2\frac{(r+a)^{\gamma/2}}{1+(r+a)^{\gamma/2}}\Big).
\end{align*}
We now set $u = (r+a)^{\gamma/2}$. When $0\le u \le (1-c)/(3+c)$, it holds that $(1-c) - 2u/(1+u) \ge (1-c)/2$,
whence
$$
\varphi_a'(r) + r\varphi_a''(r) \ge \frac\gamma 2 \varphi_a'(r) \Big( c+\frac{1-c}{2}\Big) \ge \frac{c\gamma} 2 \varphi_a'(r).
$$
Otherwise if $u> (1-c)/(3+c)$ i.e. $(r+a) > v = ((1-c)/(3+c))^{2/\gamma}$, we use the crude estimate
$$
1-c - 2\frac{u}{1+u} \ge -c-1,
$$
so that
$$
\varphi_a'(r) + r\varphi_a''(r) \ge - (1+c)\frac{\gamma}{2}\varphi_a'(r).
$$
Since $\psi : x\mapsto x^{\gamma/2-1}/(1+x^{\gamma/2})^2$ is bounded on every compact set excluding $0$, there exists a constant $C_1>0$ only depending on $\theta$ and $\gamma$ such that, still when $u> (1-c)/(3+c)$,
$$
\varphi_a'(r) + r\varphi_a''(r) \ge c\frac\gamma 2\varphi'_a(r)- C_1.
$$
We have proved \eqref{gggg}.
\vip
Applying Lemma \ref{ff}-(ii) to $\varphi_a(\|X^{1,N}_t-X^{2,N}_t\|^2)$, using \eqref{gggg} and taking the expectation, we find
\begin{align}\label{s123}
\E ( \varphi_a( \| X^{1,N}_{t} -X^{2,N}_{t}\|^2)) &\ge \E ( \varphi_a(\| X^{1,N}_0-X^{2,N}_0\|^2)) + \Big(2c\gamma -\frac{4\theta}{N} \Big) S^1_t  -\frac{2\theta}{N} S^2_t -4C_1t,
\end{align}
where
\begin{align}\label{aabb}
S^1_t = \E\Big[ \int_0^{t}   \varphi_a'(\|X^{1,N}_s-X^{2,N}_s\|^2) \dd s\Big]
\end{align}
and
\begin{align*}
S^2_t = \E \Big[\!\int_0^{t} \!\!\!\varphi_a'(\|X^{1,N}_s-X^{2,N}_s\|^2)& (X^{1,N}_s-X^{2,N}_s)\\
&\cdot \!\!\sum_{\substack{i=1\\i\ne 1,2}}^N\!\!\Big( \frac{X^{1,N}_s-X^{i,N}_s}{\|X^{1,N}_s-X^{i,N}_s\|^2} \!+\!\frac{X^{i,N}_s-X^{2,N}_s}{\|X^{i,N}_s-X^{2,N}_s\|^2}\Big)\dd s \Big].
\end{align*}
By exchangeability, we get that $S^2_t/(N-2)$ equals
\begin{align*}
 \E \Big[\int_0^{t} \!\!\!\varphi_a'(\|X^{1,N}_s-X^{2,N}_s\|^2 )(X^{1,N}_s-X^{2,N}_s)\!\cdot\! \Big( \frac{X^{1,N}_s-X^{3,N}_s}{\|X^{1,N}_s-X^{3,N}_s\|^2}\!+\!\!\frac{X^{3,N}_s-X^{2,N}_s}{\|X^{3,N}_s-X^{2,N}_s\|^2}\Big) \dd s \Big]\nonumber\\
&
\end{align*}
so that by exchangeability again,
\begin{align}
\frac{S^2_t}{N-2} =-\frac{1}{3} \E \Big[\int_0^{t} F_a(X^{1,N}_s,X^{2,N}_s,X^{3,N}_s)\dd s \Big], \label{www}
\end{align}

where 
\begin{align*}
F_a (x,y,z) =& \varphi_a'(\|X\|^2)X\cdot \Big(\frac{Z}{\|Z\|^2} + \frac{Y}{\|Y\|^2}\Big)\\
& + \varphi_a'(\|Y\|^2)Y\cdot \Big(\frac{X}{\|X\|^2} + \frac{Z}{\|Z\|^2}\Big) \\
&+ \varphi_a'(\|Z\|^2)Z\cdot \Big(\frac{Y}{\|Y\|^2} + \frac{X}{\|X\|^2}\Big),
\end{align*}
with $X=x-y$, $Y=y-z$ and $Z=z-x$. We now introduce $G_a(x,y,z)=\varphi_a'(\|X\|^2)+ \varphi_a'(\|Y\|^2) + \varphi_a'(\|Z\|^2)$ and observe that for all $X$, $Y$, $Z \in \rr^2\setminus \{ 0 \}$,
$$
G_a(x,y,z) =\varphi_a'(\|X\|^2)X\cdot\frac{X}{\|X\|^2}
+\varphi_a'(\|Y\|^2)Y\cdot\frac{Y}{\|Y\|^2}
+\varphi_a'(\|Z\|^2)Z\cdot\frac{Z}{\|Z\|^2}.
$$
Hence for all $X$, $Y$, $Z\in \rr^2\setminus \{0\}$, $G_a(x,y,z)+F_a(x,y,z)$ equals
\begin{align*}
\Big(\varphi_a'(\|X\|^2)X
+\varphi_a'(\|Y\|^2)Y+\varphi_a'(\|Z\|^2)Z\Big)\cdot \Big(\frac{X}{\|X\|^2}+\frac{Y}{\|Y\|^2}+\frac{Z}{\|Z\|^2}&\Big),
\end{align*}
which is nonnegative according to Lemma \ref{inegalitebarycentre}, since $r\to r^{-2}$
and $r\to \varphi_a'(r)$ are both nonnegative and nonincreasing on $(0,\infty)$
and $X+Y+Z=0$. Thus $F_a(x,y,z)\geq -G_a(x,y,z)$, which injected in \eqref{www} gives
\begin{align}\label{xyz}
S^2_t &\le \frac{N-2 }{3} \E \Big[\int_0^{t} G_a(X^{1,N}_s,X^{2,N}_s,X^{3,N}_s)\dd s \Big]\nonumber\\
&=(N-2) \E \Big[\int_0^{t} \varphi_a'(\|X^{1,N}_s-X^{2,N}_s\|^2)\dd s \Big],
\end{align}
by exchangeability and since $\int_0^{t} \indiq_{\{X^{1,N}_s=X^{2,N}_s\}} \dd s = 0$ a.s. according to Theorem \ref{fj1}. Gathering \eqref{s123}, \eqref{aabb} and \eqref{xyz}, the fact that $\varphi_a$ is nonnegative and bounded by $1$ and recalling the expression  of $\varphi_a'$, we get
\begin{align*}
1 + 4C_1t \ge& 2 (c\gamma - \theta)\E \Big[\int_0^{t} \frac{(\|X^{1,N}_s-X^{2,N}_s\|^2+a)^{\gamma/2-1}}{(1+ \|X^{1,N}_s-X^{2,N}_s\|^2+a)^{\gamma/2})^2} \dd s \Big].
\end{align*}
Recalling that $c=(1+\theta/\gamma)/2$ we get that $c\gamma - \theta =(\gamma -\theta)/2>0,$ so that
\begin{align*}
\E \Big[\int_0^{t} \frac{(\|X^{1,N}_s-X^{2,N}_s\|^2+a)^{\gamma/2-1}}{(1+ \|X^{1,N}_s-X^{2,N}_s\|^2+a)^{\gamma/2})^2} \dd s \Big] \le \frac{1+4C_1t}{\gamma - \theta}. 
\end{align*}
Recalling that $\gamma/2-1 <  0$ and  $\gamma >0$, we find  
$x^{\gamma /2 -1} \le 2 x^{\gamma/2-1}/( 1+x^{\gamma/2})^2 + 1$ (distinguish the cases where $x>1$ and $x\le 1$). 
This implies that
\begin{align*}
\E \Big[\int_0^{t} (\|X^{1,N}_s-X^{2,N}_s\|^2+a)^{\gamma/2-1} \dd s \Big] \le \frac{2(1+4C_1t)}{\gamma - \theta} + t,
\end{align*}
so that the monotone convergence theorem  (letting $a\to 0$)  completes the proof.
\end{proof}
\begin{proof}[Proof of Theorem \ref{thm1}-(ii)]
Recall that $\theta \in (0,2)$, $f_0 \in \cP(\rr^2)$ and that $(\mu^{N}_t)_{t\geq 0}$ is the empirical process
associated to a solution $(X^{i,N}_t)_{t\geq 0,i\in \ig 1,N\id}$ to \eqref{EDS}, for each $N\geq N_0$.
By Theorem \ref{thm1}-(i), we know that the family $((\mu^{N}_t)_{t\geq 0},N\geq N_0)$ is tight in
$C([0,\infty),\cP(\rr^2))$. We now consider a sequence $(N_k)_{k\ge 0}$ and a random variable
$(\mu_t)_{t\ge 0}\in C([0,\infty),\cP(\rr^2))$ such that $\lim_k N_k=\infty$ and
$(\mu^{N_k}_t)_{t\ge 0}$ goes to $(\mu_t)_{t\ge 0}$ in law as $k\to \infty$. 
We have $\mu_0=f_0$ since by hypothesis, $\mu^N_0$ goes weakly to $f_0$ in probability as $N\to \infty$.

\vip

{\it Step 1.}  We prove  here  
that $\E[\int_0^t \|x-y\|^{\gamma -2} \mu_s(\dd x) \mu_s(\dd y)\dd s ]<\infty$.
First, by exchangeability, we get that for all $M>0$,
\begin{align*}
 \E \Big(\int_0^{t}\!\int_{\rr^2} \!\int_{\rr^2} \!(\|x-y\|^{\gamma-2}\wedge\! M)\mu^{N_k}_s(\dd x)& \mu^{N_k}_s(\dd y) \dd s \Big)\! \leq \frac{M t}{N_k}\! \\
 &+\!  \frac{N_k-1}{N_k} \E \Big(\int_0^{t} \|X^{1,N_k}_s-X^{2,N_k}_s\|^{\gamma-2}\dd s \Big).
\end{align*}
According to Proposition \ref{estimgamma}, since $(\mu^{N_k}_t)_{t\ge 0}$ goes to $(\mu_t)_{t\ge 0}$ in law as $k\to \infty$, there exists  a constant $C_{\theta,\gamma}>0$ only depending on $\theta$ and $\gamma$  such that 
$$
\E \Big(\int_0^t\int_{\rr^2} \int_{\rr^2} (\|x-y\|^{\gamma-2}\wedge M)\mu_s(\dd x) \mu_s(\dd y)\dd s \Big) \le C_{\theta,\gamma}(1+T).
$$
Since this holds true for every $M>0$, the monotone convergence theorem gives us the result by letting $M\to \infty$.

\vip

{\it Step 2.} It only remains to check that $\mu$ is a.s. a weak solution to \eqref{EDP}. 
We apply Lemma \ref{ff}-(iii) to $\varphi \in C^2_b(\rr^2)$   and get 
$I_1(\mu^{N_k}) -I_2(\mu^{N_k})= M_k(t)$, where 
\begin{align*}
M_k(t) = \frac1{N_k} \int_0^t \sum_{i=1}^{N_k} \nabla \varphi (X^{i,N_k}_s)\cdot\dd B^i_s,
\end{align*}
and for all $\nu \in C([0,\infty), \cP(\rr^2))$, 
\begin{gather*}
I^1_t(\nu) = \int_{\rr^2} \varphi (x) \nu_t(\dd x)-\int_{\rr^2} \varphi (x) \nu_0(\dd x)-\frac12 \int_0^t \int_{\rr^2} \Delta \varphi (x) \nu_s(\dd x)\dd s,\\
I^2_t(\nu)=\frac\theta2\int_0^t \int_{\rr^2}\int_{\rr^2}  K(x-y)\cdot [\nabla\varphi (x)-\nabla \varphi(y)] 
\nu_s(\dd x) \nu_s (\dd y) \dd s.
\end{gather*}
Since $\varphi$ and $\Delta \varphi$ are continuous  and bounded, $I^1_t(\nu)$ is clearly continuous with respect to $\nu$ and bounded. Since $g:(x,y)\in (\rr^2)^2 \mapsto K(x-y) \cdot [\nabla \varphi (x)-\nabla \varphi (y)] \indiq_{x\ne y}$ is continuous and bounded on $(\rr^2)^2\setminus D$ where $D=\{(x,y)\in (\rr^2)^2 : x= y \}$, $I^2_t(\nu)$ is also continuous $\cL ( (\mu_t)_{t\ge 0})$-a.e, where $\cL ( (\mu_t)_{t\ge 0})$ is the law of $(\mu_t)_{t\ge 0}$. Indeed, Step 1 implies that for all $t\ge 0$, $\int_0^t(\mu_s\otimes \mu_s) ( D )\dd s = 0$ a.s. Thus, $I^1_t(\mu^{N_k}) -I^2_t(\mu^{N_k})$ goes in law to $I^1_t(\mu) -I^2_t(\mu)$ as $k\to \infty$. 

\vip

Moreover, $\lim_{k\to \infty}\E [ (I^1_t(\mu^{N_k}) -I^2_t(\mu^{N_k}))^2] = \lim_{k\to \infty}\E [M_k(t)^2] =0$, which implies $I^1_t(\mu^{N_k}) -I^2_t(\mu^{N_k})$ goes to $0$ in probability as $k\to \infty$. We conclude that for each $t\geq 0$,
$$
I^1_t(\mu) = I^2_t(\mu) \quad \mbox{ a.s. }
$$ 
This implies the result, i.e. that a.s., for all $t\geq 0$, $I^1_t(\mu) = I^2_t(\mu)$ 
by continuity,
since $(\mu_t)_{t\ge 0} \in C([0,\infty), \cP (\rr^2))$, since $ \varphi \in C^2_b$ and since
$g$ is bounded and for all $s\ge 0$, $\mu_s \in \cP (\rr^2)$.
\end{proof}

\section{Estimation of the first triple collision time}\label{building}

Since the existence of the particle system is not guaranteed after the time of the first triple collision, meaning the first collision between  at least  three particles (which a.s. occurs in the case where $\theta > 2(N-2)/(N-1)$ according to Theorem \ref{fj1}), we need to show that this time goes to the infinite as the number of particle increases to the infinite. This work has been more or less done  in the paper of Fournier-Tardy \cite{ftc},  but since the interest in \cite{ftc} was to study the $KS$-process near the blow-up instant, and  since  there is no solution to \eqref{EDS} in the classical sense in this region of the time, the Dirichlet forms theory  was  used  to provide a sense to such a process.  
This makes the proofs of the paper \cite{ftc} very complicated. 

\vip

We define for all $\theta >0$, all $N\ge 2$,

$$ 
 d_{\theta,N}(k) = (k-1)\Big( 2 - \frac{k\theta}{N}\Big) \mbox{ for all } k\in \ig 2,N\id \quad \mbox{ and } \quad k_2 = \min \{ k\ge 3 : d_{2,N}(k) <2\}.
$$

This section is devoted to proving the following result. Recall the definitions of $\tau_k^N$ and $\tau_k^{N,\ell}$ for $N\ge 2$, $k\in \ig 2,N \id$ and $\ell \ge 1$, see Subsection \ref{criandsupercri}. 
\begin{prop}\label{fj2}
If $\theta = 2$, we consider for all $N\ge N_0$,  $F^N_0 \in \cP_{sym,1}^*(\rr^2)$ and a $KS(\theta,N)$-process $(X^{i,N}_t)_{t\in [0,\tau_3^N), i\in \ig 1,N \id}$ on $[0,\tau_3^N)$ with initial law $F^N_0$, then 
 
 \vip
 
(i) $ \tau_3^N = \tau_{k_2}^N $ a.s.

\vip

(ii) There exists an increasing sequence of deterministic integers $(\ell_N)_{N\ge 5}$ such that setting $\beta_N = \tau_{3}^{N,\ell_N}$, then for all $t\ge 0$, $\PP (\beta_N \le t ) \to _{N\to \infty} 0$.
\end{prop}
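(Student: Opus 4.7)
The plan is to analyze, for each subset $K\subset\{1,\ldots,N\}$, the Itô decomposition of the cluster variance $R_K(X_t)=\sum_{i\in K}\|X^i_t-S_K(X_t)\|^2$. Applying Lemma \ref{ff} in the spirit of the two-particles introductory sketch, one obtains, on any interval on which the $KS$-process is defined,
\begin{align*}
dR_K(X_t) = d_{2,N}(|K|)\,dt + E^K_t\,dt + dM^K_t,
\end{align*}
where $M^K$ is a continuous martingale with $d\langle M^K\rangle_t = 4R_K(X_t)\,dt$, the internal drift $d_{2,N}(|K|)=(|K|-1)(2-2|K|/N)$ arises from the Brownian part $2(|K|-1)$ minus the symmetrized internal attraction $2|K|(|K|-1)/N$, and $E^K_t$ collects the interactions between particles in $K$ and particles outside $K$. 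Thanks to $\|K(x)\|\le \|x\|^{-1}$ and the geometric cancellation of Lemma \ref{inegalitebarycentre} (applied to suitable tuples formed of cluster and non-cluster particles), $E^K_t$ can be controlled on appropriate localization events.

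For part (i), fix $k\in\{3,\ldots,k_2-1\}$ and a subset $K$ of size $k$. By definition of $k_2$, $d_{2,N}(k)\ge 2$, so $R_K$ dominates a squared Bessel of dimension $\ge 2$ up to the external term, which cannot reach $0$ from a positive start. Rigorously, apply Itô to $-\log R_K$ between stopping times localizing $R_K$ away from $0$ and before $\tau_3^N$: the resulting drift equals $(2-d_{2,N}(k))/R_K\le 0$ plus an external piece absorbed via Lemma \ref{inegalitebarycentre} (applied to $K$ together with one external particle). Hence $-\log R_K$ is a local supermartingale and cannot diverge to $+\infty$ in finite time, so $R_K>0$ on $[0,\tau_3^N)$ almost surely. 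As this holds for every such $K$, the cluster collapsing at $\tau_3^N$ must have size at least $k_2$, giving $\tau_3^N\ge \tau_{k_2}^N$; combined with the trivial reverse inequality, this yields (i).

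For part (ii), fix $T>0$. Mimicking the subcritical Proposition \ref{estimgamma}, for each $K$ of size $k_2$ apply Itô to a bounded concave test function of the form $\phi_a(r)=(r+a)^{\alpha}/(1+(r+a)^{\alpha})$ with $\alpha\in(0,1)$ to be chosen, and absorb the external drift via the $k_2$-tuple analogue of Lemma \ref{inegalitebarycentre}. Letting $a\downarrow 0$ and exploiting the positive critical gap $2-d_{2,N}(k_2)=4/N$, one derives an a priori moment estimate on $R_K$ whose Markov/Doob consequence is a bound $\PP(\tau_{k_2}^{N,\ell}\le T)\le g(N,\ell,T)$ with $g(N,\ell,T)\to 0$ as $\ell\to\infty$ for each fixed $(N,T)$, after a polynomial-in-$N$ union bound over the $\binom{N}{k_2}$ subsets of size $k_2$ (recall that $k_2\in\{N-1,N\}$ for $N\ge 7$, so this combinatorial factor is at most linear in $N$). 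A diagonal extraction over a countable dense family of times then produces the desired sequence $(\ell_N)$ with $\PP(\beta_N\le T)\to 0$ for every $T>0$. The main obstacle is the correct construction of $\phi_a$ and the adaptation of Lemma \ref{inegalitebarycentre} from triples to $k_2$-tuples needed to cancel the external drift, so that all constants remain uniform in $N$ despite the vanishing critical gap $4/N$.
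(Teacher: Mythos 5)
Your sketch for (i) breaks down at the treatment of the external drift. For $i\in K$, $j\notin K$, the contribution of the pair $(i,j)$ to the drift of $R_K(X^N_t)$ is of the form $-\tfrac{2\theta}{N}K(X^{i,N}_t-X^{j,N}_t)\cdot(X^{i,N}_t-S_K(X^N_t))$: it has no sign, and it blows up whenever an outside particle collides with a cluster particle — binary collisions between $K$ and $K^c$ are not excluded before $\tau_3^N$. Lemma \ref{inegalitebarycentre} cannot absorb such terms: it requires three vectors summing to zero with both vector fields evaluated on that same triangle, a structure the paper obtains only after taking expectations and symmetrizing over exchangeable triples; there is no pathwise version for ``cluster plus one outsider'', so the claim that $-\log R_K$ is a local supermartingale is unsubstantiated. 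Note also that $R_K>0$ on $[0,\tau_3^N)$ holds by the very definition of $\tau_3^N$; the real issue is to rule out $\liminf_{t\to\tau_3^N}R_K(X^N_t)=0$ for $3\le |K|<k_2$, including oscillatory approaches to $0$. This is precisely why the paper isolates the cluster with the stopping times of Lemma \ref{girsanov}, removes the (then bounded) external drift by a Girsanov change of measure so that $R_K$ becomes a genuine squared Bessel process of dimension $d_{2,N}(|K|)\ge 2$ only on those excursions, and needs the dichotomy of Proposition \ref{alternative}, proved by an upcrossing argument with i.i.d.\ Bessel excursions.

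For (ii) the target you set yourself is false: at $\theta=2>2(N-2)/(N-1)$ and fixed $N$, Theorem \ref{fj1} gives $\tau_3^N<\infty$ a.s., and by (i) $\tau_{k_2}^N=\tau_3^N$, so $\lim_{\ell\to\infty}\PP(\tau_{k_2}^{N,\ell}\le T)=\PP(\tau_{k_2}^{N}\le T)>0$; no bound $g(N,\ell,T)\to 0$ as $\ell\to\infty$ at fixed $N$ can exist, and the whole content of (ii) is the $N\to\infty$ asymptotics. That asymptotics rests on the hypothesis $\max_{x}f_0(\{x\})<1$, which is absent from your argument: the paper shows that $R_{\ig 1,N\id}(X^N_0)\ge N^{2/3}$ with high probability, that the total dispersion (a squared Bessel process by Lemma \ref{bessel}) then stays above $N^{1/2}$ up to any fixed time with probability tending to one, and that a collapse of a cluster of size $k_2\ge N-2$ while the total dispersion exceeds $N^{1/2}$ forces some particle to be at distance of order $N^{1/4}$ from the bulk, which is excluded via the sixth-moment bound of Lemma \ref{compactfini} (after a scaling reduction of the initial law) and a union bound; finally $\ell_N$ is chosen using (i) so that $\tau_3^{N,\ell_N}$ and $\tau_{k_2}^{N,\ell_N}$ differ by at most $1$ with high probability. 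Moreover, even granting a $k_2$-tuple analogue of Lemma \ref{inegalitebarycentre} (which the paper never uses and which is not obvious), an occupation-time estimate in the spirit of Proposition \ref{estimgamma} controls $\E\int_0^T\Phi(R_K(X^N_s))\dd s$, i.e.\ the time spent near $0$, but not the probability of hitting the level $1/\ell$ at some time before $T$, so the Markov/Doob step you invoke does not produce the claimed hitting bound. (Minor point: Lemma \ref{dimension} gives $k_2\in\{N-2,N-1\}$, not $\{N-1,N\}$.)
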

Point (i) explains that the first collision involving strictly more than two particles is a collision between at least $k_2$ particles. Moreover, point (ii) means that the instant of this collision tends to infinity in probability as the number of particles in the system tends to infinity.

\vip

We begin with some preliminary results about the behavior of the empirical variance of subsets of the particles. The following has been essentially treated  in the paper of Fournier-Jourdain \cite{fj}. Recall the definition of $R_{\ig 1,N \id }$, see Subsection \ref{criandsupercri}, the definition of a squared Bessel process with dimension $\delta \in \rr$  and the fact that if $(Z_t)_{t\ge 0}$ is a squared Bessel process with dimension $\delta$,
\begin{itemize}
\item if $\delta \ge 2$, then a.s. for all $t>0$, $Z_t >0$,
\item if $\delta \in (0,2)$, then a.s. $(Z_t)_{t\ge 0}$ hits $0$ in finite time and is instantaneously reflected,
\item if $\delta \le 0$, then a.s. there exists $T>0$ such that for all $t\ge T$, $Z_t = 0$,  
\end{itemize}
see  the book of Revuz-Yor  \cite[Chapter XI]{ry}.

\begin{lemma}\label{bessel}
Let $(\cF_t)_{t\ge 0}$ be a filtration, $\tau$ a $(\cF_t)_{t\ge 0} $-stopping time such that $\tau \le \tau^N_N$ a.s. and $(\tau_n)_{n\ge 0}$ a sequence of increasing $(\cF_t)_{t\ge 0}$-stopping times such that $\lim_{n\to \infty} \tau_n = \tau$. If $\theta >0$ and $N\ge 5$, then for all $KS(\theta,N)$-process $(X^{i,N}_t)_{ t\in [0,\tau), i\in \ig 1,N \id}$ on $[0,\tau)$, $(R_{\ig 1,N \id }(X^N_t))_{t\in [0,\tau)}$ is a $(\cF_t)_{t\ge 0}$-measurable squared Bessel process with dimension $d_{\theta,N}(N)$ restricted to $[0,\tau)$.
\end{lemma}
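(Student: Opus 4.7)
The plan is to apply Itô's formula on each interval $[0,\tau_n]$ to derive an SDE for $R_t := R_{\ig 1,N\id}(X^N_t)$ and recognize it as the SDE of a squared Bessel process of dimension $d_{\theta,N}(N)=(N-1)(2-\theta)$. The key decomposition is
\[
R_t = \sum_{i=1}^N \|X^{i,N}_t\|^2 - N\|S_t\|^2,\qquad S_t := \frac{1}{N}\sum_{i=1}^N X^{i,N}_t,
\]
so it suffices to compute the SDE of each summand separately and subtract.

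I would first analyse the center of mass $S_t$. Summing \eqref{EDS} over $i$ and using antisymmetry of $K$ to kill the double sum $\sum_{i,j}K(X^{i,N}_s-X^{j,N}_s)=0$, one gets $S_{t\wedge\tau_n}=S_0+N^{-1}\sum_{i=1}^N B^i_{t\wedge\tau_n}$, a scaled Brownian motion with total quadratic variation $\frac{2}{N}\,\dd t$. Itô's formula then yields
\[
N\|S_{t\wedge\tau_n}\|^2 = N\|S_0\|^2 + 2\int_0^{t\wedge\tau_n}S_s\cdot\sum_{i=1}^N \dd B^i_s + 2(t\wedge\tau_n).
\]
Second, I apply Lemma~\ref{ff}-(i) with $\varphi(r)=r$ to each $\|X^{i,N}\|^2$ and sum. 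Since $K(x)\cdot x = -1$ for $x\ne 0$ and the pairwise-collision set on $[0,\tau_n]$ has Lebesgue measure zero a.s.\ (which follows from the $L^1$-integrability of $\|X^{1,N}_s-X^{2,N}_s\|^{-1}$ inherent to the $KS$-process construction of Theorem~\ref{fj1}, combined with exchangeability), the interaction contribution collapses to $-\theta(N-1)(t\wedge\tau_n)$, giving
\[
\sum_{i=1}^N \|X^{i,N}_{t\wedge\tau_n}\|^2 = \sum_{i=1}^N \|X^{i,N}_0\|^2 + 2\int_0^{t\wedge\tau_n}\sum_{i=1}^N X^{i,N}_s\cdot \dd B^i_s + \bigl(2N-\theta(N-1)\bigr)(t\wedge\tau_n).
\]

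Subtracting and setting $Y^i_s:=X^{i,N}_s-S_s$, the drifts collapse to $(N-1)(2-\theta)(t\wedge\tau_n)$ and the martingale parts combine to
\[
M_{t\wedge\tau_n} := 2\int_0^{t\wedge\tau_n}\sum_{i=1}^N Y^i_s\cdot \dd B^i_s,
\]
which, by independence of the $B^i$, satisfies $\langle M\rangle_{t\wedge\tau_n} = 4\int_0^{t\wedge\tau_n}\sum_i\|Y^i_s\|^2\,\dd s = 4\int_0^{t\wedge\tau_n}R_s\,\dd s$. Letting $n\to\infty$ extends this identity to $[0,\tau)$. Since $\tau\le\tau^N_N$ forces $R_s>0$ on $[0,\tau)$, the process $W_t := \int_0^t (2\sqrt{R_s})^{-1}\,\dd M_s$ is a well-defined continuous $(\cF_t)$-local martingale with $\langle W\rangle_t=t$, hence a Brownian motion by Lévy's characterization, and
\[
\dd R_t = 2\sqrt{R_t}\,\dd W_t + (N-1)(2-\theta)\,\dd t\qquad\text{on }[0,\tau),
\]
which is exactly the defining SDE of a squared Bessel process of dimension $d_{\theta,N}(N)$.

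The main (rather mild) obstacle is the measure-zero argument for the pairwise-collision set on each $[0,\tau_n]$, needed to reduce the interaction drift $\frac{\theta}{N}\int\sum_{i,j}K(X^{i,N}_s-X^{j,N}_s)\cdot(X^{i,N}_s-X^{j,N}_s)\,\dd s$ cleanly to $-\theta(N-1)(t\wedge\tau_n)$; everything else is a routine Itô expansion followed by Lévy's characterization.
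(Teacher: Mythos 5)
Your proof is correct, and it takes a genuinely different route from the paper. The paper writes $R_{\ig 1,N\id}(x)=(2N)^{-1}\sum_{i\ne j}\|x^i-x^j\|^2$, applies Lemma \ref{ff}-(ii) with $\varphi=\mathrm{Id}$ to every pair, and then has to evaluate two triple sums over distinct indices (showing $S(x)=N(N-1)(N-2)$ for the drift and $\tilde S(x)=N(N-2)R_{\ig 1,N\id}(x)$ for the bracket of the driving martingale, the latter via a case-by-case Kronecker computation), before concluding with L\'evy's characterization; you instead use the orthogonal decomposition $R_{\ig 1,N\id}(X^N_t)=\sum_i\|X^{i,N}_t\|^2-N\|S_t\|^2$, where the centre of mass $S_t$ is a scaled Brownian motion because the interaction cancels by oddness of $K$, so the drift is a two-line cancellation $2N-\theta(N-1)-2=(N-1)(2-\theta)=d_{\theta,N}(N)$ and the bracket identity $\langle M\rangle_t=4\int_0^t R_{\ig 1,N\id}(X^N_s)\,\dd s$ is immediate since the $Y^i=X^{i,N}-S$ sit directly against the independent $B^i$. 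This buys you a proof free of the combinatorial identities and of the paper's construction of $W$ as a limit of localized integrals $W^n$; what the paper's pairwise route buys is that it runs entirely through Lemma \ref{ff}-(ii), the same formula that is reused verbatim for subsets $K$ in Lemma \ref{girsanov}. Note also that both arguments rely on $\int_0^{t\wedge\tau_n}\indiq_{\{X^{i,N}_s=X^{j,N}_s\}}\dd s=0$ a.s.: the paper uses it silently when it writes the drift $4(1-\theta/N)(t\wedge\tau_n)$ and when it sets $K(x)\cdot x=-1$ inside the triple sums (citing Theorem \ref{fj1} for this fact only later, in Proposition \ref{estimgamma}), while you flag it explicitly and justify it the same way, which is if anything slightly more careful than the original; the only residual caveat, shared equally by the paper's proof, is that this integrability statement is established in Theorem \ref{fj1} for the constructed process up to $\tau_3^{N,\ell}$, whereas the lemma is phrased for an arbitrary $KS(\theta,N)$-process on $[0,\tau)$ with $\tau\le\tau^N_N$.
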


\begin{proof}
We apply Lemma \ref{ff}-(ii) with $\varphi = I_d$ and we get for all $i\ne j\in \ig 1,N \id $, all $n\ge 0$,
\begin{align}\label{xcv}
 \|X^{i,N}_{t\wedge \tau_n}-X^{j,N}_{t\wedge \tau_n}\|^2  =&\|X^{i,N}_0-X^{j,N}_0 \|^2 + 2 \int_0^{t\wedge \tau_n} (X^{i,N}_s - X^{j,N}_s) \cdot \dd (B^i_s-B^j_s)\notag \\
 &+ 4\Big(1-\frac{\theta}{N}\Big)(t\wedge \tau_n) - \frac{2\theta}{N}J_t^n,
\end{align}
where 
\begin{align*}
J_t^n =  \sum_{\substack{k=1 \\ k\ne i,j}}^N \int_0^{t\wedge \tau_n} \Big( \frac{X^{i,N}_s - X^{k,N}_s}{\|X^{i,N}_s - X^{k,N}_s\|^2} + \frac{X^{k,N}_s - X^{j,N}_s}{\|X^{k,N}_s - X^{j,N}_s\|^2} \Big) \cdot (X^{i,N}_s - X^{j,N}_s) \dd s.
\end{align*}
Using \eqref{xcv} together with the fact that $R_{\ig 1,N \id}(x) = (2N)^{-1} \sum_{1\le i\ne j\le N} \|x^i-x^j\|^2$ imply by summing the last equality over $i \ne j \in \ig 1,N \id$, and dividing  it  by $2N$, 
\begin{align*}
R_{\ig 1,N \id} (X^N_{t\wedge \tau_n}) =&R_{\ig 1,N \id} (X^N_0) +  2\int_0^{t\wedge \tau_n}\frac{1}{2N} \sum_{1\le i\ne j\le N} (X^{i,N}_s - X^{j,N}_s) \cdot \dd (B^i_s-B^j_s) \\
&+ (N-1)\Big(2-\frac{2\theta} N \Big)(t\wedge \tau_n) - \frac{\theta}{N^2} \int_0^{t\wedge \tau_n} S(X^N_s) \dd s,
\end{align*}
where for all  $x = (x^1,\dots, x^N)\in E_2 := \{ x\in (\rr^2)^N : \mbox{ for all } i,j\in \ig 1,N\id, x^i\ne x^j\}$ ,
$$
S(x) = \sum_{\substack{1\le i,j,k \le N \\ \textit{ distinct}}}  \Big( \frac{x^i - x^k}{\|x^i - x^k\|^2} + \frac{x^k - x^j}{\|x^k - x^j\|^2} \Big) \cdot (x^i - x^j).
$$
 This is well defined since a.s., $ \int_0^{\tau_n}\indiq_{X^N_s \notin E_2}\dd s = 0$ according to Theorem \ref{fj1}.  By oddness, we have for all $x\in  E_2$, 
\begin{align*}
S(x) =& 2 \sum_{\substack{1\le i,j,k \le N \\ \textit{ distinct}}} \frac{x^i - x^k}{\|x^i - x^k\|^2}  \cdot (x^i - x^j) \\
=& 2 \sum_{\substack{1\le i,j,k \le N \\ \textit{ distinct}}} \Big(1 + \frac{x^i - x^k}{\|x^i - x^k\|^2}  \cdot (x^k - x^j)\Big) = 2N(N-1)(N-2) -S(x),
\end{align*}

so that $S(x) = N(N-1)(N-2)$. Since $(N-1)(2-2\theta /N) - \theta (N-1)(N-2)/N = (N-1)(2-\theta)$, we deduce that we can write for all $n\ge 0$,
\begin{align}\label{qqqq}
R_{\ig 1,N \id} (X^N_{t\wedge \tau_n}) =&R_{\ig 1,N \id} (X^N_0) + 2 \int_0^{t\wedge \tau_n} R_{\ig 1,N \id}(X^N_s)^{1/2} \dd W^n_s + (N-1)(2-\theta)(t\wedge \tau_n),
\end{align}
 with
$$
(W^n_t)_{t\ge 0} = \Big(\frac1{2N}\sum_{1\le i\ne j \le N}\int_0^{t\wedge \tau_n}  \frac{X^{i,N}_s-X^{j,N}_s}{R_{\ig 1,N \id}(X^N_s)^{1/2}} \cdot \dd (B^i_s - B^j_s)\Big)_{t\ge 0},
$$
which is well defined because $R_{\ig 1,N \id}(X^N_s)>0$ for all $s \in [0, t\wedge \tau_n]$ since $\tau_n < \tau^N_N$ a.s., 

\vip

According to \eqref{qqqq}, the definition of a squared Bessel process (see  the book of  Revuz-Yor \cite[Chapter XI]{ry}) and since $d_{\theta,N}(N) = (N-1)(2-\theta)$, it only remains to show that for all $n\ge 0$, $(W^n_t)_{t\in [0,\tau_n)}$ is a $1$-dimensional Brownian motion restricted to $[0,\tau_n)$ which we now do. For all $n\ge 0$, $\langle W^n \rangle _t$ equals
\begin{align*}
& \frac{1}{4N^2}\!\! \sum_{\substack{1\le i\ne j\le N\\1\le k\ne \ell\le N }} \Big\langle   \int_0^{\cdot} \frac{(X^{i,N}_s-X^{j,N}_s)}{ R_{\ig 1,N\id}(X^N_s)^{1/2} }   \dd ( B^i_s-B^j_s) , \int_0^{\cdot} \frac{ (X^{k,N}_s-X^{\ell ,N}_s)}{R_{\ig 1,N\id}(X^N_s)^{1/2}} \cdot \dd ( B^k_s-B^\ell _s) \Big\rangle_{t\wedge \tau_n}
\end{align*}
so that
\begin{align*}
\langle W^n \rangle _t=&\frac{1}{4N^2} \sum_{\substack{1\le i\ne j\le N\\1\le k\ne \ell\le N }}( \delta_{i,k} - \delta_{i,\ell}- \delta_{j,k} + \delta_{j,\ell}  ) \int_0^{t\wedge \tau_n} \frac{(X^{i,N}_s-X^{j,N}_s) \cdot (X^{k,N}_s-X^{\ell ,N}_s)}{R_{\ig 1,N\id}(X^N_s)} \dd s.
\end{align*}
By distinguishing cases according to whether one or two of Kronecker's symbols in the sum equals $1$, we get
\begin{align*}
\langle W^n \rangle _t=& \frac{1}{N^2} \sum_{1\le i\ne j \le N} \int_0^{t\wedge \tau_n} \frac{\|X^{i,N}_s-X^{j,N}_s\|^2}{R_{\ig 1,N\id}(X^N_s)}  \dd s + \frac{1}{N^2}  \int_0^{t\wedge \tau_n} \frac{\tilde{S}(X^N_s)}{R_{\ig 1,N\id}(X^N_s)}\dd s,
\end{align*}
where 
\begin{align*}
\tilde{S}(x):=&  \sum_{\substack{1\le i,j,k \le N \\ \textit{ distinct}}} (x^i-x^j) \cdot (x^i-x^k).
\end{align*}

By oddness we get

\begin{align*}
\tilde S (x) =&  \sum_{\substack{1\le i,j,k \le N \\ \textit{  distinct }}}\big( \|x^i-x^j\|^2 + (x^i-x^j) \cdot (x^j-x^k)\big)\\
&= 2N(N-2) R_{\ig 1,N \id}(x) -\tilde{S}(x),
\end{align*}
so that $\tilde{S}(x) = N(N-2) R_{\ig 1,N \id}(x)$ which implies that 
\begin{align*}
\langle W^n \rangle _t = \frac{2}{N}( t\wedge \tau_n )+ \frac{N-2}{N} (t \wedge \tau_n) = t \wedge \tau_n .
\end{align*}
The conclusion easily follows thanks to the L\' evy characterization.
\end{proof}

In the following Lemma, we show that if particles indexed by $K\subset \ig 1,N \id$ are far enough away from the other, then $(R_K(X^N_t))_{t\ge 0}$ behaves like a squared Bessel process with dimension $d_{\theta,N}(|K|)$. Indeed, we will explain in which sense  we can neglect the interactions between particles indexed by $K$ and particles indexed by $K^c$.

\begin{lemma}\label{girsanov}
For all $T>0$, $\alpha>0$, $\theta  >0$, $N\ge 3$ and $K\subset \ig 1,N \id$ with $|K| \in \ig 2,N-1 \id$, if $\tau$ is a stopping time for a filtration $(\cF_t)_{t\ge 0}$ and $(X^{i,N}_t)_{t\in [0,\tau), i\in \ig 1,N \id}$ is a $KS(\theta, N)$-process on $[0,\tau)$, then the family $\Big((R_K(X^N_{\sigma^k_\alpha\wedge T+t}))_{t\in [0, \tilde\sigma^k_{\alpha,K}\wedge T -\sigma^k_{\alpha,K}\wedge T)}\Big)_{k\ge 1}$ is a family of squared Bessel process with dimension $d_{\theta ,N}(|K|)$ restricted to $[0,\tilde\sigma^k_{\alpha,K}\wedge T  - \sigma^k_{\alpha,K}\wedge T)$ driven by  independent Brownian 
motions  under $\QQ^{\alpha, T}_K := \cE ( L^{\alpha, T}_K)\cdot \PP$ where  $$
L^{\alpha,T}_K =  \frac{\theta}{N}\sum_{i\in K} \int_{0}^ {T} \indiq_{s\in \cup_{k\ge 0} [\sigma_{\alpha,K}^k,\tilde\sigma_{\alpha,K}^k)}\sum_{j\notin K} \frac{X^{i,N}_s-X^{j,N}_s}{\|X^{i,N}_s-X^{j,N}_s\|^2}\dd B^i_s,
$$
with $(\sigma_{\alpha,K}^k)_{k\ge 1}$ and $(\tilde\sigma_{\alpha,K}^k)_{k\ge 0}$ defined by induction by setting $\tilde\sigma _{\alpha,K}^0 = 0$ and for all $k\ge 1$, 
\begin{gather*}
\sigma_{\alpha,K}^k = \inf \{ t\ge \tilde\sigma_{\alpha,K}^{k-1} : \min_{i\in K, j\notin K} \|X^{i,N}_t - X^{j,N}_t\| \ge \alpha \} \\
 \tilde\sigma_{\alpha,K}^k = \inf \{ t\ge \sigma_{\alpha,K}^{k} : \min_{i\in K, j\notin K} \|X^{i,N}_t-X^{j,N}_t\| \le \alpha /2 \},
\end{gather*}
with the convention $\inf \emptyset = \tau$.
\end{lemma}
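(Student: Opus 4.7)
The plan is to use Girsanov's theorem to kill the interactions between $K$ and $K^c$ on the good intervals $[\sigma^k_{\alpha,K},\tilde\sigma^k_{\alpha,K})$, and then to reproduce the computation of Lemma \ref{bessel} for the decoupled $K$-subsystem. First I would check that $\cE(L^{\alpha,T}_K)$ is a true $\PP$-martingale on $[0,T]$: on the good intervals every distance $\|X^{i,N}_s-X^{j,N}_s\|$ with $i\in K$ and $j\notin K$ is at least $\alpha/2$, so the integrand of $L^{\alpha,T}_K$ is bounded deterministically, $\langle L^{\alpha,T}_K\rangle_T$ is bounded by a constant depending only on $\theta, N, T, \alpha, |K|$, and Novikov's criterion applies.

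By Girsanov, $\tilde B^i_t := B^i_t - \langle B^i, L^{\alpha,T}_K\rangle_t$, $i\in \ig 1,N\id$, are independent $\QQ^{\alpha,T}_K$-Brownian motions on $[0,T]$. Since $K(x)=-x/\|x\|^2$, we have for $i\in K$ that
$$\dd\langle B^i, L^{\alpha,T}_K\rangle_t = -\frac{\theta}{N}\indiq_{t\in\bigcup_{k\ge 1}[\sigma^k_{\alpha,K},\tilde\sigma^k_{\alpha,K})}\sum_{j\notin K}K(X^{i,N}_t-X^{j,N}_t)\,\dd t.$$
Substituting $\dd B^i=\dd\tilde B^i + \dd\langle B^i,L^{\alpha,T}_K\rangle$ in \eqref{EDS} shows that, for $i\in K$ and $t\in[\sigma^k_{\alpha,K}\wedge T,\tilde\sigma^k_{\alpha,K}\wedge T)$,
$$\dd X^{i,N}_t = \dd\tilde B^i_t + \frac{\theta}{N}\sum_{l\in K}K(X^{i,N}_t-X^{l,N}_t)\,\dd t,$$
so that on each good interval, under $\QQ^{\alpha,T}_K$, the subsystem $(X^{i,N})_{i\in K}$ is exactly decoupled from the outside particles.

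I would then reproduce the proof of Lemma \ref{bessel} with $K$ playing the role of $\ig 1,N\id$ while keeping the original $\theta/N$ normalization. Applying Lemma \ref{ff}-(ii) with $\varphi=\mathrm{Id}$ to $\|X^{i,N}-X^{j,N}\|^2$ for each pair $i\ne j\in K$, summing over such pairs and dividing by $2|K|$, and using the oddness identity
$$\sum_{\substack{i,j,k\in K \\ \text{distinct}}}\Big(\frac{x^i-x^k}{\|x^i-x^k\|^2}+\frac{x^k-x^j}{\|x^k-x^j\|^2}\Big)\cdot (x^i-x^j) = |K|(|K|-1)(|K|-2),$$
one obtains on each good interval
$$\dd R_K(X^N_t) = 2\sqrt{R_K(X^N_t)}\,\dd \tilde W_t + (|K|-1)\Big(2-\frac{|K|\theta}{N}\Big)\,\dd t,$$
where $\tilde W$ is a one-dimensional $\QQ^{\alpha,T}_K$-Brownian motion obtained via L\'evy's characterization from the martingale part of $R_K$, exactly as in the proof of Lemma \ref{bessel}. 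This is the squared Bessel SDE of dimension $d_{\theta,N}(|K|)$; independence of the successive pieces as $k\ge 1$ varies then follows from the strong Markov property, since $\sigma^k_{\alpha,K}$ and $\tilde\sigma^k_{\alpha,K}$ are $(\cF_t)$-stopping times and the pieces are driven by increments of $\tilde W$ on disjoint intervals.

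The main difficulty I expect is the bookkeeping of the localization: the Girsanov shift only cancels the $K$-$K^c$ drift inside the good intervals, and one must verify carefully that no spurious boundary contributions appear at $\sigma^k_{\alpha,K}$ or $\tilde\sigma^k_{\alpha,K}$, and that the drift cancellation is exact on each individual piece so that the clean squared Bessel SDE of dimension $d_{\theta,N}(|K|)$ really holds.
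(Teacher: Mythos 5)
Your proposal is correct and follows essentially the same route as the paper: a Novikov bound using the $\alpha/2$ lower bound on the $K$--$K^c$ distances on the good intervals, Girsanov to cancel exactly the $K$--$K^c$ drift there so that $(X^{i,N})_{i\in K}$ becomes a Keller--Segel subsystem with intra-$K$ interaction $\theta/N$ under $\QQ^{\alpha,T}_K$, and then the squared-Bessel identification of $R_K$ with dimension $d_{\theta,N}(|K|)$ together with independence of the pieces coming from the disjoint Brownian increments. The only cosmetic difference is that you re-derive the Bessel SDE for the subsystem, whereas the paper simply invokes Lemma \ref{bessel} applied to the resulting $KS(|K|\theta/N,|K|)$-process; the computations are identical.
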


\begin{proof}
We fix $T>0$, $\alpha>0$, $N\ge 3$ and $K\subset \ig 1,N \id$ with $|K|\in \ig 1,N-1\id$. We compute 
\begin{align*}
\langle L^{\alpha,\cdot}_K\rangle _T = & \frac{\theta^2}{N^2} \sum_{i\in K} \int_0^T \indiq_{s\in \cup_{k\ge 0} [\sigma_{\alpha,K}^k,\tilde\sigma_{\alpha,K}^k)} \Big\| \sum_{j\notin K} \frac{X^{i,N}_s-X^{j,N}_s}{\|X^{i,N}_s-X^{j,N}_s\|^2}\Big\|^2 \dd s\\
\le & \frac{4\theta^2|K^c|^2}{\alpha^2N^2} \sum_{i\in K} \int_0^T \indiq_{s\in \cup_{k\ge 0} [\sigma_{\alpha,K}^k,\tilde\sigma_{\alpha,K}^k)} \dd s\\
\le & \frac{4\theta^2|K^c|^2|K|T}{\alpha^2N^2},
\end{align*}
so that the Novikov condition is  satisfied. Applying  Girsanov's theorem, we get that under $\QQ^{\alpha,T}_K$, there exists a $2N$-dimensional Brownian motion $(\tilde B ^i_t)_{t\ge 0, i\in \ig 1,N\id}$ such that for all $k\ge 1$, all $t\in [\sigma^k_{\alpha,K}\wedge T, \tilde\sigma^k_{\alpha,K}\wedge T)$, all $i\in \ig 1,N \id$, 

$$
X^{i,N}_t = X^{i,N}_{\sigma^k_{\alpha,K}\wedge T} + \tilde B^i_t- \tilde B^i_{\sigma^k_{\alpha,K}\wedge T} - \frac{\theta |K| /N}{|K|}\sum_{j\in K \setminus \{i\}} \int_{\sigma^k_{\alpha,K}\wedge T}^t \frac{X^{i,N}_s-X^{j,N}_s}{\|X^{i,N}_s-X^{j,N}_s\|^2}\dd s.
$$
 Thus $((X^{i,N}_{\sigma^k_{\alpha,K}\wedge T+t})_{t\in [0, \tilde\sigma^k_{\alpha,K}\wedge T -\sigma^k_{\alpha,K}\wedge T ), i\in K}, k\ge 0 )$ is a family of $KS(|K|\!,\! |K|\theta /N)$-processes  restricted to $[0, \tilde\sigma^k_{\alpha,K}\wedge T -\sigma^k_{\alpha,K}\wedge T)$ driven by independent Brownian motions.  We used the fact that the sequence of processes $((\tilde B_{\sigma^k_{\alpha,K}\wedge T + t} - \tilde B_{\sigma^k_{\alpha,K}\wedge T})_{t\in [0, \tilde \sigma^k_{\alpha,K} \wedge T - \sigma^k_{\alpha,K}\wedge T)}))_{k\ge 1}$ is a family of  independent  restricted $2N$-dimensional Brownian 
motions  under $\QQ^{\alpha,T}_K$.

\vip
 Lemma \ref{bessel} concludes the proof. 
\end{proof}

The following Proposition shows regularity in some sense of the process at $\tau_3^N$.

\begin{prop}\label{alternative}
If $\theta =2$, $N\ge 2$, and $((X^{i,N}_t)_{t\in [0,\tau_3^N)})_{i\in \ig 1,N\id}$ is a $KS(\theta,N)$-process on $[0,\tau_3^N)$, then for all $K\subset \ig 1,N \id$ such that $|K|\ge 2$, we have a.s. the alternative
$$
\lim_{t\to \tau^N_{3}} R_K(X^N_t) = 0 \quad \mbox{ or } \quad \liminf_{t\to \tau^N_3} R_{K}(X^N_t) >0
$$
\end{prop}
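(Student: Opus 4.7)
I argue by contradiction, assuming that the event
$$A := \Big\{\liminf_{t\to\tau_3^N}R_K(X^N_t)=0\Big\}\cap\Big\{\limsup_{t\to\tau_3^N}R_K(X^N_t)>0\Big\}$$
has positive probability, and reduce the analysis to the continuity of squared Bessel processes via Lemma \ref{bessel} (if $|K|=N$) or Lemma \ref{girsanov} (if $|K|<N$). The first ingredient is a geometric separation lemma: for every $\delta>0$, there exists an integer $\ell_\delta$ such that, for every $t<\tau_3^{N,\ell_\delta}$,
$$R_K(X^N_t)<\delta^2 \ \Longrightarrow\ \min_{i\in K,\,j\notin K}\|X^{i,N}_t-X^{j,N}_t\|>\delta.$$
Indeed, $R_K<\delta^2$ forces each particle of $K$ within distance $\delta$ of $S_K(X^N_t)$, so if some $X^{j,N}$ with $j\notin K$ were within distance $\delta$ of this cluster, then $R_{K\cup\{j\}}\leq C_{|K|}\delta^2$ with $|K\cup\{j\}|\geq 3$, contradicting $t<\tau_3^{N,\ell_\delta}$ for $\ell_\delta$ chosen sufficiently large.

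When $|K|=N$ the proof is immediate: since $\tau_3^N\leq\tau_N^N$, Lemma \ref{bessel} identifies $(R_K(X^N_t))_{t\in[0,\tau_3^N)}$ with a squared Bessel process of dimension $d_{2,N}(N)=0$, which is continuous and nonexploding on $[0,\infty)$, so $\lim_{t\to\tau_3^N}R_K(X^N_t)$ exists in $[0,\infty)$ and the alternative holds. When $2\leq|K|<N$, I pick $\eta>0$ with $\PP(A\cap\{\limsup R_K>\eta\})>0$, choose $\delta\in(0,\sqrt\eta)$ and a deterministic horizon $T$ with $\PP(\tau_3^N\leq T)>0$, and apply Lemma \ref{girsanov} with $\alpha=\delta$. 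Under $\QQ=\QQ^{\delta,T}_K\sim\PP$, the process $(R_K(X^N_\cdot))$ is, on each isolation interval $[\sigma^k_{\delta,K}\wedge T,\tilde\sigma^k_{\delta,K}\wedge T)$, a squared Bessel process of positive dimension $d_{2,N}(|K|)$, while the separation lemma forces $R_K\geq\delta^2$ on every close interval $[\tilde\sigma^k_{\delta,K},\sigma^{k+1}_{\delta,K})$.

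Two subcases then remain. If, from some random index onwards, isolation persists all the way up to $\tau_3^N$, then on the last isolation interval $R_K$ is a continuous squared Bessel process under $\QQ$; its limit at $\tau_3^N$ exists under $\QQ$, hence under $\PP$ by equivalence, contradicting $\liminf<\limsup$ on $A$. Otherwise isolation breaks infinitely often, and continuity of $R_K$ together with $R_K\geq\delta^2$ on each preceding close interval yield $R_K(\sigma^k_{\delta,K})\geq\delta^2$ at the start of every isolation interval. I would then rule out infinitely many Bessel excursions starting from $\delta^2$ and descending below $\delta^2/4$ within the random isolation intervals accumulating at $\tau_3^N$, by combining the strong Markov property of squared Bessel processes at the stopping times $\sigma^k_{\delta,K}$, tail bounds on the downcrossing times from $\delta^2$ to $\delta^2/4$, and a Borel--Cantelli argument carried out under $\QQ$ and transferred to $\PP$ via the equivalence; this would give $\liminf R_K\geq \delta^2/4$, contradicting $\liminf R_K=0$ on $A$. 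The main obstacle is this last quantitative step: one needs Bessel downcrossing estimates that are uniform in the random length of the isolation intervals, and the density $\cE(L^{\delta,T}_K)$ of $\QQ$ with respect to $\PP$ must be handled carefully as it is $L^1(\PP)$-bounded on any fixed horizon but not uniformly integrable as $\delta\to 0$.
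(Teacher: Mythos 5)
Your base case $|K|=N$ and the general strategy (Girsanov via Lemma \ref{girsanov}, Bessel excursions, Borel--Cantelli on crossing durations) are in the right spirit, but the proof has a genuine gap at its very foundation: the separation lemma. You derive the implication ``$R_K(X^N_t)<\delta^2 \Rightarrow$ the cluster $K$ is at distance $>\delta$ from the other particles'' from the condition $t<\tau_3^{N,\ell_\delta}$. First, the quantifier is backwards: before $\tau_3^{N,\ell}$ every $3$-subset $K'$ satisfies $R_{K'}>1/\ell$, so to get a contradiction from $R_{K\cup\{j\}}\le C_{|K|}\delta^2$ you need $1/\ell_\delta\ge C_{|K|}\delta^2$, i.e.\ $\ell_\delta$ \emph{not too large}, not ``sufficiently large''. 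Much more seriously, the implication is only valid on $[0,\tau_3^{N,\ell_\delta})$, while the whole contradiction argument must be run as $t\to\tau_3^N$, i.e.\ on $[\tau_3^{N,\ell},\tau_3^N)$ for every $\ell$, where your lemma says nothing. On the event you consider, it is perfectly possible (and is exactly the picture of an approaching multiple collision) that an outside particle comes within $\delta$ of the cluster precisely when $R_K<\delta^2$, at times close to $\tau_3^N$. Then your claims that $R_K\ge\delta^2$ on every ``close'' interval and that $R_K(\,\cdot\,)\ge\delta^2$ at the start of each isolation interval are unjustified, and during such approaches the Girsanov/Bessel identification of Lemma \ref{girsanov} (which requires the outside particles to stay at distance $\ge\alpha/2$) is unavailable. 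So the dichotomy ``isolation persists / isolation breaks'' does not yield the contradiction as written.

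The paper closes exactly this hole by a reverse induction on $|K|$. Having the alternative for all sets of cardinality $|K|+1$, one observes that $R_K\le R_{K\cup\{i\}}$, so on the event where some $\liminf_{t\to\tau_3^N}R_{K\cup\{i\}}=0$ one gets $R_{K\cup\{i\}}\to0$ and hence $R_K\to0$, and the alternative holds trivially; it therefore suffices to work on $\{\liminf_{t\to\tau_3^N}\min_{i\notin K}R_{K\cup\{i\}}(X^N_t)>0\}$. It is this condition --- not being before some $\tau_3^{N,\ell}$ --- that guarantees, uniformly up to $\tau_3^N$, that whenever $R_K\le 2\e$ the cluster is at distance $\ge\alpha$ from the other particles, which legitimizes the use of Lemma \ref{girsanov} on the crossing intervals. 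Your final quantitative step (strong Markov plus downcrossing tail bounds within random isolation windows), which you flag as the main obstacle, is handled in the paper by gluing the true excursions of $R_K$ with auxiliary independent squared Bessel pieces so as to produce genuinely i.i.d.\ crossing durations with positive mean, whose infinite sum contradicts $\tau_3^N\le T$; that part of your sketch could likely be repaired along the same lines, but without the induction step (or an equivalent control of $\min_{i\notin K}R_{K\cup\{i\}}$ near $\tau_3^N$) the argument does not go through.
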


\begin{proof}
We proceed by reverse induction on $|K|$. First, if $|K|=N$, then the stochastic process $(R_{\ig 1,N\id}(X^N_t))_{t\in [0,\tau_3^N)}$ is a squared Bessel process with dimension $d_{\theta,N}(N)$ restricted to $[0,\tau_3^N)$ according to Lemma \ref{bessel}, so that $\lim_{t\to \tau_3^N}R_{\ig 1,N\id}(X^N_t)$ exists a.s.

\vip

We fix $k\in \ig 3,N\id$ and suppose that we proved the result for all subsets of $ \ig 1,N\id$ with cardinal $ k $, we fix $K\subset \ig 1,N\id$ such that $|K| = k-1$ and we prove that the result is true for $K$. If there exists $i \notin K$ such that $ R_{K\cup \{ i\}} (x) = 0$ then it is clear that $ R_K(x)=0$. This,  the  continuity of $R_K$ for all $K\subset \ig 1,N \id$ and the induction hypothesis gives us that it suffices to handle the study on the event $\{\liminf_{t\to \tau^N_3} \min_{i\notin K} R_{K\cup \{ i\}}(X^N_t) >0\}$. We set

$$
\cA = \Big\{\liminf_{t\to \tau^N_3} R_{K}(X^N_t)=0,\,\, \limsup_{t\to \tau^N_3} R_{K}(X^N_t)>0, \,\, \liminf_{t\to \tau^N_3} \min_{i\notin K} R_{K\cup \{ i\}}(X^N_t) >0 \Big\} 
$$

and assume by contradiction that $\PP ( \cA ) >0.$ Since $\tau^N_3<\infty$ a.s., there exists $T>0$ such that

$$
\PP(\cA \cap \{ \tau_3^N\le T\}) >0.
$$

{\it Step 1.} There exists $\e \in \QQ_+^*$, $\eta \in \QQ_+^*$ small enough and $c \in \QQ_+^*$, $\alpha \in \QQ_+^*$, $T>0$ such that with positive probability $ \liminf_{t\to \tau_3^N}\min_{i\notin K} R_{K\cup \{ i\}}(X^N_t)\ge 2c$, $R_K(X^N_t)$ crosses the interval $[\e, 2\e]$ an infinite amount of time during $[0,\tau_3^N)$, $\tau_3^N\le T$ and if $R_K(x) \le 2\e$ and $\min_{i\notin K} R_{K\cup \{ i \} } (x) \ge c$, then $\|x^i-x^j\|\ge \alpha$ for all $i\in K$, $j\notin K$.

\vip 

{\it Step 2.} We build in this step a sequence of i.i.d squared Bessel processes from the empirical dispersion $(R_K(X^N_t))_{t\ge 0}$. Take $T$, $\alpha$, $c$ and $\e$ defined in Step 1 and we define the sequences of stopping times $(\sigma_{c,\e}^k)_{k\ge 1}$ and $(\tilde\sigma_{c,\e}^k)_{\ge 0}$ by induction by setting $\tilde\sigma _{c,\e}^0 = 0$ and for all $k\ge 1$, 
\begin{gather*}
\sigma_{c,\e}^k = \inf \{ t\ge \tilde\sigma_{c,\e}^{k-1} : R_K(X^N_t) \le \e \quad\mbox{ and } \quad \min_{i\notin K} R_{K\cup\{i\}}( X^N_t) \ge 2c \} \\
 \tilde\sigma_{c,\e}^k = \inf \{ t\ge \sigma_{c,\e}^{k} : R_K(X^N_t) \ge 2\e \quad\mbox{ or }\quad \min_{i\notin K} R_{K\cup\{i\}}(X^N_t) \le c \},
\end{gather*}
with the convention $\inf \emptyset = \tau_3^N$. 

\vip

According to Lemma \ref{girsanov}, there exists a probability $\QQ^{\alpha,T}_K$ absolutely continuous with respect to $\PP$ such that $\Big((R_K(X^N_{\sigma^k_{c,\e}\wedge T +t}))_{t\in [0, \tilde\sigma^k_{c,\e}\wedge T -\sigma^k_{c,\e}\wedge T)}\Big)_{k\ge 1}$ is a family of squared Bessel processes with dimension $d_{\theta,N}(|K|)$ driven by  independent 
Brownian motions.  Indeed, one has to observe that a.s., for all $k\ge 1$, there exists $k_0 \ge 1$ such that $[\sigma^k_{c,\e}\wedge T, \tilde\sigma^k_{c ,\e}\wedge T] \subset [\sigma^{k_0}_{\alpha,K}\wedge T, \tilde\sigma^{k_0}_{\alpha,K }\wedge T]$, where $\sigma^{k_0}_{\alpha,K}$ and $\sigma^{k_0}_{\alpha,K}$ have been defined in Lemma \ref{girsanov}, because if $t\in [\sigma^k_{c,\e}\wedge T, \tilde\sigma^k_{c ,\e}\wedge T]$, then $R_K(X^N_t) \le 2\e$ and $\min_{i\notin K} R_{K\cup \{ i\}} (X^N_t) \ge c$, so that $\min _{i\in K, j\notin K} \|X^{i,N}_t - X^{j,N}_t\|^2 \ge \alpha$ according to Step 1.

\vip

 We consider a sequence $((W^{k}_t)_{t\ge 1})_{k\ge 0}$ of  independent  $1$-dimensional $\QQ^{\alpha,T}_K$-Brownian motions  independent  of all  the  other variables we have considered and we define for all $k\ge 1$,  
 \begin{align*}
 Z^k_t =& \indiq_{t\in [0, \tilde\sigma^k_{c,\e}\wedge T -\eta^{k,1})} R_K(X^N_{\eta^{k,1}+t})+ \indiq_{t\in  [\tilde\sigma^k_{c,\e}\wedge T -\eta^{k,1}, \tilde\sigma^k_{c,\e}\wedge T -\eta^{k,1} + \eta^{k,2}]} Z^{k,2}_t
 \end{align*} 
if $\sigma^k_{c,\e}\wedge T <\tau_3^N$ where 
\begin{itemize}
\item  $\eta^{k,1} = \inf \{ t\ge \sigma^k_{c,\e} : R_K(X^N_{t}) \ge \e \}\wedge T$,
\item $(Z^{k,2}_t)_{t\ge 0}$ is a $\QQ^{\alpha,T}_K$-squared Bessel process with dimension $d_{\theta,N}(|K|)$ driven by the Brownian motion $(W^{k,2}_t)_{t\ge 0}$ starting at $R_K(X^N_{\tilde\sigma^k_{c,\e}\wedge T })$, (observe that if $\tilde\sigma^k_{c,\e}\wedge T = \tau_3^N$, this has a sense since a squared Bessel process is continuous),
\item  $\eta^{k,2} = \inf \{ t\ge 0 : Z^{k,2}_t\ge 2\e \}$,
\end{itemize}
 
and $Z^k_t = \indiq_{t\in [0,\eta^{k,2}]}Z^{k,2}_t$ if $\sigma^k_{c,\e}\wedge T = \tau_3^N$.

\vip
 
 Under $\QQ^{\alpha,T}_K$, $((Z^k_t)_{t\ge 0})_{k\ge 0}$ is a family of i.i.d squared Bessel processes with dimension $d_{\theta,N}(|K|)$ starting at $\e$ and killed when reaching $2\e$. 

\vip

{\it Step 3.} We conclude. According to Step 2, $(\eta^{k,2})_{k\ge 0}$ is a i.i.d family of random variable under $\QQ^{\alpha,T}_K$ such that $\E_{\QQ^{\alpha,T}_K}( \eta^{k,2})>0 $, which together with the Borel-Cantelli Lemma imply
\begin{align}\label{xxxx}
\sum_{k\ge 1} \eta ^{k,2} = \infty \mbox{ a.s.}
\end{align}
However, we deduce from Step 1 that with positive probability, $(R_K(X^N_t))_{t\in [0, \tau_3^N)}$ up crosses $[\e,2\e]$ an infinite number of times and there exists $t_0 \in [0,\tau_3^N)$ such that for all $t\ge t_0$, $\min_{i\notin K} R_{K\cup \{ i\}}( X^N_t) \ge 2c$, so that for all $n\ge 0$, $\sigma^n_{c,\e}\wedge T<\tau_3^N$ and there exists $n_0\ge 1$ such that for all $n\ge n_0$,

$$
R_K( X^N_{\sigma^n_{c,\e}}) = \e \qquad \mbox{ and } \qquad R_K( X^N_{\tilde\sigma^n_{c,\e}}) = 2\e .
$$
Thus, with positive probability, there exists $n_0\ge 0$ such that for all $n\ge n_0$,
$$
(Z^{n}_t)_{t\in [0,\eta^{n,2})} = (R_K(X^N_t))_{t\in [ \sigma^n_{c,\e}, \tilde\sigma^n_{c,\e})},
$$ 
so that $\sum_{k\ge 1} \eta^{k,2} \le (\sum_{k=1}^{k_0-1} \eta^{k,2}) + \tau_3^N <\infty$. Indeed, $\tau^N_3 <\infty$ according to Theorem \ref{fj1} and $\sum_{k=1}^{k_0-1}\eta^{k,2}<\infty $ a.s. because $d_{\theta,N}(|K|) > 0$ (see  the book of  Revuz-Yor \cite[Chapter XI]{ry}), since $|K|<N$ and $\theta =2$. This is contradictory with \eqref{xxxx}.

\end{proof}

The following Lemma will be helpful to determine which kind of collision occurs. In particular, this gives us information on the dimension of the squared Bessel processes linked with the empirical variances.

\begin{lemma}\label{dimension}
If $N\ge 5$, then $k_2 \in \{ N-2,N-1 \}$.
\end{lemma}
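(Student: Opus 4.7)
The plan is to unfold the definitions and then do an elementary monotonicity argument on a concave polynomial in $k$. Setting $\theta = 2$ gives $d_{2,N}(k) = 2(k-1)(N-k)/N$, so the condition $d_{2,N}(k) < 2$ is equivalent to $g(k) := (k-1)(N-k) < N$. Two facts about $g$ will drive the whole argument: (a) $g(N-1) = N - 2 < N$, so $k_2 \leq N-1$ automatically; (b) $g$ is symmetric about $(N+1)/2$, since $g(N+1-k) = g(k)$, and concave, so on any interval symmetric around $(N+1)/2$ its minimum is achieved at the endpoints.

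First I would handle the degenerate case $N = 5$ directly: there, $d_{2,5}(3) = 8/5 < 2$, so since by definition $k_2 \geq 3$ and $N - 2 = 3$, we immediately get $k_2 = 3 = N - 2$.

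For $N \geq 6$, the interval $[3, N-2]$ is symmetric about $(N+1)/2$, so by (b),
\begin{equation*}
\min_{k \in \{3,\dots,N-2\}} g(k) = g(3) = g(N-2) = 2(N-3).
\end{equation*}
The inequality $2(N-3) \geq N$ is equivalent to $N \geq 6$, which holds. Therefore $g(k) \geq N$ for every $k \in \{3,\dots, N-2\}$, i.e.\ $d_{2,N}(k) \geq 2$ on that range, which together with (a) forces $k_2 = N - 1$. Combining both cases yields $k_2 \in \{N-2, N-1\}$ for all $N \geq 5$.

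There is no real obstacle here, as the statement reduces to an algebraic inequality about a quadratic in $k$; the only slightly subtle point is spotting the symmetry $g(k) = g(N+1-k)$ to avoid analyzing the two endpoints separately, and being careful to treat $N = 5$ as a boundary case since the relevant interval $[3, N-2]$ collapses to a single point.
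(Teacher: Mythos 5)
Your proof is correct and follows essentially the same route as the paper: exploit that $d_{2,N}(k)=2(k-1)(N-k)/N$ is a concave quadratic symmetric about $(N+1)/2$, check the endpoint value at $k=3$ (equivalently $k=N-2$), note $d_{2,N}(N-1)<2$ to get $k_2\le N-1$, and treat $N=5$ separately. The only cosmetic difference is that you evaluate on the symmetric range $\{3,\dots,N-2\}$, which even pins down $k_2=N-1$ for $N\ge 6$, whereas the paper only needs the range $\{3,\dots,N-3\}$ to conclude $k_2\ge N-2$.
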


\begin{proof}

We first have $d_{2,N}(N-1) = 2(N-2)/N <2$, so that $k_2 \le N-1$. Thus it suffices to show that $k_2 \ge N-2$. 

\vip

We can restrict our study to the case where $N\ge 6$ since the result is clear if $N=5$. It suffices to show that $2\le\min (d_{2,N}(3),d_{2,N}(N-3))$ since $d_{2,N}(k)$ is increasing on $(-\infty ,(N+1)/2]$ and decreasing on $[(N+1)/2, \infty )$. Indeed, $d_{2,N}(k) = 2(k-1)(N-k)/N$ is a polynomial expression with roots $1$ and $N$.

\vip

Moreover, since $d_{2,N}(3)\le d_{2,N}(N-3)$ because $|(N+1)/2-3| \ge |(N+1)/2-(N-3)|$, $d_{2,N}(k)$ is symmetric with respect to the line of equation $y=(N+1)/2$ and $d_{2,N}$ is increasing on $(-\infty, (N+1)/2)$, it suffices to show that $d_{2,N}(3)\ge 2$, but this is clear since $d_{2,N}(3) = 2(2-6/N) \ge 2$. 
\end{proof}

We are now ready to give the 

\begin{proof}[Proof of Proposition \ref{fj2}-(i)]
Let $(X^{i,N}_t)_{t\in [0,\tau_3^N), i\in \ig 1,N \id }$ be the $KS(\theta,N)$-process on $[0,\tau_3^N)$ built in Theorem \ref{fj1}. It suffices to show that $ \tau^N_3 = \tau^N_{k_2} $ a.s. We reason by the absurd and suppose that with positive probability, $\tau^N_{k_2} >\tau^N_3$, which implies that with positive probability there exists $\ell \ge 1$ such that $\tau^{N,\ell}_{k_2} >\tau_3^N$. Indeed, if for all $\ell \ge 1$, $\tau_{k_2}^{N,\ell}\le \tau^N_3$, then $\tau_{k_2}^N = \lim_{\ell \to \infty} \tau^{N,\ell}_3 \le \tau_3^N$. We conclude that under this assumption, there exists $T >0$ such that 
\begin{align}\label{hyp}
 \mbox{ with positive probability, }\quad \tau_3^N \le T \quad \mbox{ and there exists } \ell \ge 1 \mbox{ such that }\quad \tau^{N,\ell}_{k_2} \ge \tau^N_3.
 \end{align} 
Observe that on the event $\{ \tau^{N,\ell}_{k_2} \ge \tau_3^N\}$, for all $K\subset \ig 1,N \id$ with $|K| = k_2$, we have that $\min_{t\in [0,\tau_3^N)} R_K(X^N_t) \ge 1/\ell$.

\vip

{\it Step 1.}
 We show that there exists $\alpha >0$ and $K\subset \ig 1,N \id$ with $|K|\in \ig 3, k_2 -1 \id$ such that with positive probability,
$$
 \lim_{t\to \tau_3^N} R_K(X^N_t) = 0 \quad \mbox{ and } \quad \liminf_{t\to \tau_3^N} \min_{i\in K, j\notin K} \|X^{i,N}_t-X^{j,N}_t\| \ge \alpha.
$$
According to Proposition \ref{alternative}, the definition of $\tau_3^N$ and \eqref{hyp}, we have with positive probability,
$$
\cS := \Big\{ K\subset \ig 1,N \id : |K|\in  \ig 3,k_2-1\id \quad \mbox{ and } \quad \lim_{t\to \tau_3^N} R_K(X^N_t) = 0\Big\}\ne \emptyset,
$$
so that according to Proposition \ref{alternative}  and \eqref{hyp} again, with positive probability there exists $K \subset \ig 1,N \id $ with $|K|\in \ig3, k_2-1\id$ (which is for example a set of $\cS$ which is maximal for the inclusion)  and $c >0$ such that 
$$
\lim_{t\to \tau_3^N} R_K(X^N_t) = 0 \quad \mbox{ and }\quad \liminf_{t\to\tau_3^N} \min_{i\notin K} R_{K\cup \{ i\} } (X^N_t) \ge c.
$$ 
 We conclude observing that for any fixed $c'>0$, there exists $\e >0$ and $\alpha >0$ such that if $R_K(x) \le \e$ and $\min_{i\notin K} R_{K\cup \{i\}}(x) \ge c'$, then $\min_{i\in K, j\notin K} \|x^i-x^j\| \ge \alpha$. 

\vip

{\it Step 2.}
We conclude. We apply Lemma \ref{girsanov} with $\alpha $, $T$ and $K$ defined in Step 1, so that under $\QQ^{\alpha,T}_K$ with positive probability, a squared Bessel process with dimension $d_{\theta,N}(|K|) \ge 2$ (because $|K| \in \ig 3,k_2-1 \id$ and by definition of $k_2$) tends to $0$ in finite time, which is absurd.  

\end{proof}

The issue now is to prove Proposition \ref{fj2}-(ii). According to Proposition \ref{fj2}-(i), for all $N\ge 5$ we have $\tau_3^{N,\ell}-\tau_{k_2}^{N,\ell} \to 0$ in probability as $\ell \to \infty$, so that we can find a deterministic increasing sequence $(\ell_N)_{N\ge 5}$ such that 
\begin{align}\label{ttt}
\lim_{N\to \infty}\PP ( |\tau_3^{N,\ell_N}-\tau_{k_2}^{N,\ell_N}|\ge 1)=0.
\end{align} 

Until now we set for all $N\ge 5$, $\kappa_N := \tau_{k_2}^{N,\ell_N}$ and $\beta_N = \tau_3^{N,\ell_N}$. We need to prove first a technical lemma.

\begin{lemma}\label{compactfini}
 If $\sup_{N\ge 5} \E[\|X^{1,N}_0\|^6]<\infty$, then 
$$
\mbox{ for all } N\ge 5, \mbox{ all } t\ge 0 \qquad \E [\| X^{1,N}_{t\wedge \kappa_N}\|^6 ] \le \Big( \sup_{N\ge 5} \E[\|X^{1,N}_0\|^6]+18t\Big)e^{18t}
$$
\end{lemma}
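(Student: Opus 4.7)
The plan is to apply Lemma~\ref{ff}-(i) with the test function $\varphi(r) = r^3$, so that $\varphi(\|X^{i,N}_t\|^2) = \|X^{i,N}_t\|^6$. Since $\varphi'(r) = 3r^2$ and $\varphi''(r) = 6r$, one has $\varphi'(\|X\|^2) + \|X\|^2\varphi''(\|X\|^2) = 9\|X\|^4$, and Lemma~\ref{ff}-(i) applied for $i=1$ gives, for any sequence of stopping times $\tau_n \uparrow \tau_3^N$,
\begin{align*}
\|X^{1,N}_{t\wedge \tau_n}\|^6 =&\ \|X^{1,N}_0\|^6 + M_t + 18\int_0^{t\wedge\tau_n}\|X^{1,N}_s\|^4\dd s \\
&+ \frac{3\theta}{N}\int_0^{t\wedge\tau_n}\sum_{j=1}^N K(X^{1,N}_s - X^{j,N}_s)\cdot \bigl[X^{1,N}_s\|X^{1,N}_s\|^4 - X^{j,N}_s\|X^{j,N}_s\|^4\bigr]\dd s,
\end{align*}
where $M_t$ denotes the stochastic integral against $B^1$, a local martingale.

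The crucial observation is that the interaction term is pathwise nonpositive. Since $K(x-y) = -(x-y)/\|x-y\|^2$, each summand equals $-(x-y)\cdot\bigl(f(x)-f(y)\bigr)/\|x-y\|^2$ with $f(x) := x\|x\|^4$, and a direct computation gives $Df(x) = \|x\|^4 I + 4\|x\|^2 xx^T \succeq 0$. Hence $f$ is monotone, so $(x-y)\cdot(f(x)-f(y)) \ge 0$ and the interaction sum is $\le 0$ pointwise.

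To justify taking expectations I would also localize by $\sigma_R := \inf\{t\ge 0 : \|X^{1,N}_t\| \ge R\}$, so that up to $t \wedge \tau_n \wedge \sigma_R$ the martingale $M_\cdot$ is bounded and has zero mean. Choosing $\tau_n \uparrow \tau_3^N$ and using that $\kappa_N < \tau_3^N$ a.s.\ (so that $t \wedge \tau_n \wedge \sigma_R \ge t \wedge \kappa_N \wedge \sigma_R$ eventually), discarding the nonpositive interaction, and setting $u_R(t) := \E\bigl[\|X^{1,N}_{t\wedge\kappa_N\wedge\sigma_R}\|^6\bigr]$, I obtain using $r^4 \le 1 + r^6$ and Fubini
$$u_R(t) \le M_0 + 18\int_0^t \bigl(1 + u_R(s)\bigr)\dd s = M_0 + 18t + 18\int_0^t u_R(s)\dd s,$$
where $M_0 := \sup_{N\ge 5}\E[\|X^{1,N}_0\|^6]$. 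Gronwall's inequality then gives $u_R(t) \le (M_0 + 18t)e^{18t}$, and Fatou's lemma as $R \to \infty$ yields the claim.

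The only real obstacle is the bookkeeping for the double localization (by $\tau_n$ and by $\sigma_R$, together with the fact that $\kappa_N$ need not belong to the sequence $(\tau_n)$); the substantive mechanism is the elementary monotonicity of $x \mapsto x\|x\|^4$, which makes the Coulombian attraction \emph{decrease} the sixth moment in expectation, leaving only the benign Brownian drift $18\|X\|^4$ to be controlled by Gronwall.
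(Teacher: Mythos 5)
Your proposal is correct and follows essentially the same route as the paper: It\^o's formula via Lemma \ref{ff}-(i) with $\varphi(r)=r^3$, elimination of the attractive interaction term through the monotonicity (convexity) of $x\mapsto x\|x\|^4$, the bound $r^4\le 1+r^6$, then Gronwall and Fatou after localization. The only minor difference is that you discard the interaction pathwise (so no exchangeability is needed) and localize with the exit time of particle $1$ alone, whereas the paper localizes with a permutation-invariant stopping time and uses exchangeability plus symmetrization to obtain the same sign in expectation.
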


Let us point out that the power $6$ in this result is arbitrary and does not play any specific role. 
 
\begin{proof}
We fix $M>0$ and set the stopping time $\sigma_M = \inf \{ t\ge 0 :  \sum_{i=1}^N\|X^{i,N}_{t\wedge \kappa_N}\|^6 \le M\}$ with the convention $\inf \emptyset = \kappa_N$. We apply Lemma \ref{ff}-(i) with $\varphi(x) = x^3$ and taking the expectation so that we get
\begin{align*}
\E[ \| X^{1,N}_{t\wedge \sigma_M}\|^6]   =&  \| X^{1,N}_0\|^6 + 18\E\Big[\int_0^{t\wedge \sigma_M}\|X^{1,N}_s\|^4\dd s \Big] \\
& +\frac{6\theta}{N} \E\Big[\sum_{j=2}^N \int_0^{t\wedge \sigma_M} K(X^{1,N}_s - X^{j,N}_s)\cdot  \|X^{1,N}_s\|^4 X^{1,N}_s\dd s\Big].
\end{align*}
Using exchangeability of the family $\{(X^{i,N}_{t\wedge \sigma_M})_{t\ge 0}, i\in \ig 1,N \id\}$ which is given by Theorem \ref{fj1} and the fact that $\sigma_M$ is invariant by permutation of the family $((X^{i,N}_t)_{t\in [0,\tau_3^N)})_{i\in \ig 1,N \id}$, we get
\begin{align*}
\E[ \| X^{1,N}_{t\wedge \sigma_M}\|^6 ]   =& \E[ \| X^{1,N}_0\|^6 ]+ 18\E\Big[\int_0^{t\wedge \sigma_M}\|X^{1,N}_s\|^4\dd s \Big] \\
& +\!\frac{6\theta}{N^2} \E\Big[ \sum_{1\le i\ne j\le N} \int_0^{t\wedge \sigma_M}\!\!\! K(X^{i,N}_s - X^{j,N}_s)\!\cdot\! \|X^{i,N}_s\|^4 X^{i,N}_s\dd s\Big]\\
\le & \E[\| X^{1,N}_0\|^6] + 18 \E\Big[\int_0^{t\wedge \sigma_M}\|X^{1,N}_s\|^4\dd s \Big],
\end{align*}
since by oddness of $K$ and convexity of $x\mapsto \|x\|^6$, and recalling the definition of $K$,
$$
\sum_{1\le i\ne j \le N} K(x^i-x^j)\cdot \|x^i\|^4x^i = \frac{1}{2} \sum_{1\le i\ne j \le N} K(x^i-x^j)\cdot [ \|x^i\|^4x^i-\|x^j\|^4x^j] \le 0.
$$
Since $u\le 1 + u^{3/2}$ for all $u\ge 0$ and $t\wedge \sigma_M \le t$, we get 
\begin{align*}
\E[ \| X^{1,N}_{t\wedge \sigma_M}\|^6 ]   =& \sup_{N\ge 5}\E[ \| X^{1,N}_0\|^6 ]+ 18t + 18 \int_0^t \E [ \|X^{1,N}_{s\wedge \sigma_M}\|^6 \dd s,
\end{align*}
which implies the result thanks to Gronwall's Lemma and Fatou's Lemma because $\sigma_M$ goes to $\kappa_N$ a.s. as $M\to \infty$.

\end{proof}

We can finally give the

\begin{proof}[Proof of Proposition \ref{fj2}-(ii)]
{\it Step 1.} We first show that it is sufficient to consider $F^N_0$ such that $\sup_{N\ge 5} \E [ \|X^{1,N}_0\|^6] < \infty$. Indeed, suppose that we have shown the result for these initial conditions and take some $F^N_0 \in \cP_{sym,1}^*$ and the associated random variable $(X^{i,N}_0)_{i\in \ig 1,N\id}$. We set the $\sigma ( X^{i,N}_0, N\ge 5, i\in \ig 1,N\id )$-measurable random variable $\Lambda = 1+\max_{j\in \ig 1,N\id}\|X^{j,N}_0\|$ and define
\begin{align*}
\mbox{ for all } i\in \ig 1,N\id, \quad \tilde{X}^{i,N}_0 = \frac{X^{i,N}_0}{\Lambda}.
\end{align*}
 We denote by $\tilde{F}^N_0$ the law of $(\tilde{X}^{1,N}_0,\cdots, \tilde{X}^{N,N}_0)$ and we clearly have   $\sup_{N\ge 5}\E(\|\tilde{X}^{1,N}_0\|^6) \le 1$ and $\tilde{F}^N_0 \in \cP_{sym ,1}^*$. We set $
\tilde{X}^{N}_t = X^{N}_{\Lambda^2t}/\Lambda$ and we see thanks to a change of variable and thanks to the scaling property of the Brownian motion that $(\tilde{X}^N_t)_{t\ge 0}$ is a $KS(2,N)$-process with initial law $\tilde{F}^N_0$. By hypothesis, we have that $\tilde{\tau}_{3}^{N,\ell_N} \to _{N\to \infty} \infty$ in probability with obvious notations. But since a.s. $\Lambda \ge 1$, we have a.s.
$$
\mbox{ for } N \mbox{ large enough, }\quad \beta_N=\tau_{3}^{N,\ell_N} \ge \Lambda^2\tilde\tau_{3}^{N,\lfloor \Lambda^2\ell_N \rfloor } \ge \tilde{\tau}_{3}^{N,\ell_N},
$$
which proves the result.

\vip

{\it Step 2.} We are reduced to show the result when $\sup_{N\ge 5} \E[\|X^{1,N}_0\|^6]<\infty$.

\vip

{\it Step 2.1.} We set $\rho_N := \inf \{ t\ge 0: R_{\ig 1,N\id} (X^N_t) \le N^{1/2} \}$ and show $\PP( \rho_N \le t) \to 0$ as $N\to \infty$ for all $t> 0$. Considering $(Y_t)_{t\in [0,\tau_3^N)} = (R_{\ig 1,N \id}(X^N_t)^{1/2})_{t\in [0,\tau_3^N)}$, Lemma \ref{bessel} and the It\^o's formula imply that for all $t\ge 0$,
\begin{align*}
Y_{t\wedge \rho_N} &= R_{\ig 1,N \id}(X^N_0)^{1/2}+ W_{t\wedge \rho_N} -\frac12 \int_0^{t\wedge \rho_N}\frac{\dd s}{Y_s} \\
&\ge R_{\ig 1,N \id}(X^N_0)^{1/2} + W_{t\wedge \rho_N} -\frac{N^{-1/4}}{2} (t\wedge \rho_N) ,
\end{align*}
where $(W_t)_{t\ge 0}$ is a $1$-dimensional Brownian motion. For $N$ large enough, we get
\begin{align*}
\PP ( \rho_N \le t )\le & \PP\Big(  R_{\ig 1,N \id}(X^N_0)^{1/2}+ \inf_{s\in [0,t]}( W_s - s/(2N^{1/4}) ) \le N^{1/4} \Big) \\
= &\PP\Big( \sup_{s\in [0,t]}( -W_s + s/(2 N^{1/4}) ) \ge R_{\ig 1,N \id}(X^N_0)^{1/2} - N^{1/4} \Big)\\
\le & \PP \Big( \sup_{s\in [0,t]} -W_s \ge R_{\ig 1,N \id}(X^N_0)^{1/2} - N^{1/4}-t/(2N^{1/4}) \Big)\\
\le & \PP \Big( \sup_{s\in [0,t]} -W_s > C_t N^{1/3} \Big) + \PP \Big( R_{\ig 1,N \id}(X^N_0)\le N^{2/3}\Big),
\end{align*}
where $C_t$ is a constant only depending on $t$. Since $(-W_t)_{t\ge 0}$ is a $1$-dimensional Brownian motion, it suffices to show that the second term of the RHS goes to $0$ as $N\to \infty$. Since $f_0$ is not a Dirac mass, we can fix $A>0$ such that
$$
\int_{\rr^2}\psi_A (x) f_0 (\dd x) >0,
$$
 where $\psi_A (y) = \|\varphi_A(y) - \int_{\rr^2} \varphi_A (x) f_0(\dd x) \|^2$ with $\varphi_A \in C_c(\rr^2,\rr^2)$ is a $1$-Lipschitz bounded function such that $\varphi_A(x) = x$ for every $x\in B(0,A)$. Since $\varphi_A$ is $1$-Lipschitz, we have
 \begin{align*}
 \sum_{i=1}^N \Big\|\varphi_A(x^i)-\frac1N \sum_{k=1}^N \varphi_A( x^k)\Big\|^2 =& \frac{1}{2N}\sum_{1\le i\ne j \le N} \|\varphi_A(x^i)-\varphi_A(x^j)\|^2 \\
 \le& \frac{1}{2N}\sum_{1\le i\ne j \le N} \|x^i-x^j\|^2 = R_{\ig 1,N \id}(x),
\end{align*}
so that
\begin{align*}
 \PP ( R_{\ig 1,N \id}(X^N_0)  \le  N^{2/3}) \le & \PP \Big( \sum_{i=1}^N \Big\| \varphi_A(X^{i,N}_0) - \frac{1}{N}\sum_{k=1}^N \varphi_A(X^{k,N}_0)\Big\|^2\le  N^{2/3}\Big)\\
 \le & P_1^N + P_2^N,
\end{align*}
where 
\begin{gather*}
P_1^N = \PP\Big(\Big\| \int_{\rr^2} \varphi_A(x) f_0(\dd x) - \int_{\rr^2} \varphi_A(x) \mu^N_0(\dd x) \Big\| \ge \e \Big),\\
P_2^N = \PP \Big( \frac2N\sum_{i=1}^N \Big\| \varphi_A(X^{i,N}_0) - \int_{\rr^2} \varphi_A(x) f_0(\dd x) \Big\|^2 -2\e^2 \le  N^{-1/3}\Big),
\end{gather*}
 because $\int_{\rr^2} \varphi_A(x) \mu^N_0(\dd x) = N^{-1} \sum_{k=1}^N \varphi_A(X^{k,N}_0)$, with $\e\! \in \!(\!0\!,(1/2)\!\int_{\rr^2} \!\psi_A (x) f_0 (\dd x)]^{1/2}] $.
By hypothesis on $\mu_0^N$, we get that $\lim_{N\to \infty} P_1^N =0$, whence it suffices to show that $\lim_{N\to\infty} P_2^N = 0$. It follows
\begin{align*}
P_2^N = & \PP \Big( 2\int_{\rr^2} \psi_A (y) \mu^N_0(\dd y) -2\e^2 \le N^{-1/3} \Big)\\
\le &  \PP \Big( \Big|\int_{\rr^2} \psi_A (y) \mu^N_0(\dd y) - \int_{\rr^2} \psi_A (y) f_0(\dd y) \Big| \ge \e^2 \Big)\\
& + \PP \Big( 2\int_{\rr^2} \psi_A (y) f_0(\dd y) -4\e^2 \le N^{-1/3} \Big),
\end{align*}
and these both quantity tends to $0$ as $N$ tends to $\infty$ because $2\int_{\rr^2} \psi (y) f_0(\dd y) -4\e^2 >0$, by definition of $\mu^N_0$ and since $\psi _A$ is continuous and bounded.

\vip

{\it Step 2.2.} We show that $\kappa_N$ goes to infinity in probability as $N\to \infty$. We fix $t>0$ and we have
\begin{align*}
\PP (\kappa_N \le t) &\le \PP( \rho_N \le t) + \PP( \rho_N> t, \kappa_N \le t).
\end{align*}
According to Step 2, it is sufficient to show that the last term of the previous inequality tends to $0$. On the event $\{\rho_N > \kappa_N\}$, there exists $K\subset \ig 1,N \id$ with $|K| = k_2$ such that $R_K(X^N_{\kappa_N}) \le 1/\ell_N$ and by definition of $\rho_N$ we have
\begin{align*}
 N^{1/2} &\le R_{\ig 1,N \id}(X^N_{\kappa_N})= \frac1{2N} \sum_{1\le i\ne j \le N} \|X^{i,N}_{\kappa_N}-X^{j,N}_{\kappa_N}\|^2 \\
&= \frac{|K|}{N}R_{K}(X^N_{\kappa_N}) + \frac{1}{N}\sum_{i\in K, j\notin K}\|X^{i,N}_{\kappa_N}-X^{j,N}_{\kappa_N}\|^2 + \frac{1}{2N}\sum_{i,j \notin K} \|X^{i,N}_{\kappa_N}-X^{j,N}_{\kappa_N}\|^2.
\end{align*}
Fixing $i_0 = \min K$, using that $\|x+y\|^2\le 2(\|x\|^2+\|y\|^2)$ in the two last sums and that $R_K(X^N_{\kappa_N}) \le 1/\ell_N$ since $\kappa_N>\rho_N$, we get 
\begin{align*}
N^{1/2} &\le \frac{|K|}{N\ell_N} + \frac{2|K^c|}{N}\sum_{i\in K}\|X^{i,N}_{\kappa_N}-X^{i_0,N}_{\kappa_N}\|^2+ \frac{2|K|+2}N\sum_{j\notin K}\|X^{i_0,N}_{\kappa_N}-X^{j,N}_{\kappa_N}\|^2\\
&\le \frac{|K|}{N\ell_N} + \frac{4|K||K^c|}{N\ell_N} + \frac{2|K|+2}N\sum_{j\notin K}\|X^{i_0,N}_{\kappa_N}-X^{j,N}_{\kappa_N}\|^2.
\end{align*}
Using again that $\|x+y\|^2 \le 2( \|x\|^2 + \|y\|^2)$, we deduce
\begin{align*}
N^{1/2}&\le \frac{|K|}{N\ell_N} + \frac{4|K||K^c|}{N\ell_N} + \frac{8(|K|+1)|K^c|}{N}\max_{i\in K^c \cup\{i_0\}}\|X^{i,N}_{\kappa_N} \|^2,
\end{align*}
which implies, 
\begin{align*}
N^{1/2} &\le  \frac{9}{\ell_N} + 16\max_{i\in K^c \cup\{i_0\}}\|X^{i,N}_{\kappa_N} \|^2,
\end{align*}
since $|K| = k_2 \in \ig N-2,N-1 \id$ according to Lemma \ref{dimension}. Since $\ell_N \ge N-4$ by construction, and by exchangeability, this gives that there exists $C>0$ such that for $N$ large enough

\begin{align*}
\PP( \rho_N > t, \kappa_N \le t) &\le \PP \Big( \bigcup_{i\in \ig 1,N \id} \{\|X^{i,N}_{t\wedge \kappa_N}\|^2 \ge C  N^{1/2}\} \Big) \\
&\le N \PP ( \|X^{1,N}_{t\wedge \kappa_N}\|^2 \ge C  N^{1/2}).
\end{align*}
Using the Markov inequality, this leads us to 
\begin{align*}
\PP( \rho_N > t, \kappa_N \le t)&\le \frac{\E [ \|X^{1,N}_{t\wedge \kappa_N}\|^6 ]}{C^3 N^{1/2}} ,
\end{align*}
which tends to $0$ as $N\to \infty$ thanks to Lemma \ref{compactfini}.

\vip

{\it Step 2.3.}
Here we conclude, but we clearly have $\beta_N \to \infty$ in probability as $N$ goes to infinity because recalling \eqref{ttt} and Step 2.2, for each $t\ge 0$ we get

$$
\PP(\beta_N\le t) \le \PP ( |\kappa_N -\beta_N|\le 1) + \PP ( \kappa_N \le t+1 ) \to_{N\to \infty} 0.
$$

\end{proof}

\section{The critical case}\label{critical}

Here we prove Theorem \ref{thm2}-(ii). We recall that $\theta =2$, the process 
$(X^{i,N}_t)_{t\in [0, \tau_3^N), i\in \ig 1,N \id}$ is a $KS(\theta,N)$-process issued from Theorem \ref{fj1} and 
$\beta_N$ has been defined in Proposition \ref{fj2}. We introduce
$G : (\rr^2)^3 \mapsto \rr$ defined as follows. For $x,y,z\in (\rr^2)^3$, we set $X=x-y$, $Y=y-z$ and $Z=z-x$
and put
$$
G(x,y,z) =  \Big(L(\|X\|^2)X + L(\|Y\|^2)Y +L(\|Z\|^2)Z\Big)\cdot
\Big( \frac{X}{\|X\|^2} + \frac{Y}{\|Y\|^2}+\frac{Z}{\|Z\|^2} \Big),
$$
if $0\notin \{X,Y,Z\}$, where $L(r) = \log(1+1/r) - 1/(1+r)$. If now $0\in \{ X,Y,Z\}$ and $ (X,Y,Z) \ne (0,0,0)$,
we put $G(x,y,z) = \infty$. If finally $X=Y=Z=0$, then $G(x,y,z) = 0$.
\vip
The following result and its proof are highly linked with Proposition \ref{estimgamma},
replacing $\varphi_a$ by $L(\cdot +a)$.  This estimate again indicates that the particles do not collide too much.

\begin{prop}\label{estimeeG}
If $f_0\in \cP(\rr^2)$ then for all $t>0$, 
\begin{gather*}
\sup_{N\ge 5}\E \Big[\int_0^{t\wedge \beta_N} G(X^{1,N}_s,X^{2,N}_s,X^{3,N}_s) \dd s \Big]<\infty, \label{bbb1}
\end{gather*}
\end{prop}

\begin{proof}
We fix $\eta \in (0,1]$ and we introduce $L_\eta(r)= L(\eta +r)$ for all $r\geq 0$.
We apply the It\^ o formula to $\varphi_\eta ( \|X^{1,N}_{t\wedge \beta_N} - X^{2,N}_{t\wedge \beta_N}\|^2)$ 
with $\varphi_\eta (r) = (r+\eta)\log ( 1+1/(\eta+r))$ whence $\varphi'_\eta(r) = L_\eta(r)$ 
and $\varphi_\eta''(r) = -[(\eta+r)(1+\eta +r)^2]^{-1}$. Taking the expectation, we find that
\begin{equation}\label{aaa1}
S^{N,\eta}_t:=\E[\varphi_\eta (\|X^{1,N}_{t\wedge \beta_N} - X^{2,N}_{t\wedge \beta_N}\|^2)]
= S^{N,\eta}_0 + \int_0^{t} \Big(A^{N,\eta}_s + \frac{2\theta}N B^{N,\eta}_s\Big)\dd s,
\end{equation}
where, setting $h_\eta(r)= -r/[(\eta+r)(1+\eta+r)^2]$,
\begin{align*}
A^{N,\eta}_s=&4 \E \Big[ \indiq_{s\le \beta_N} \Big(\Big(1-\frac{\theta}N \Big)L_\eta (\|X^{1,N}_s-X^{2,N}_s\|^2)+ h_\eta(\|X^{1,N}_s-X^{2,N}_s\|^2)\Big) \Big] \\
\geq &4\Big(1-\frac\theta N \Big)\E \Big[ \indiq_{s\le \beta_N} L_\eta (\|X^{1,N}_s-X^{2,N}_s\|^2)\Big]-4
\end{align*}
because $h_\eta$ is bounded by $1$, and where $B^{N,\eta}_s$ equals
\begin{align*}
 \sum_{j=3}^N \E\Big[\!\indiq_{s\le \beta_N}\!\Big(\!K(X^{1,N}_s\!\!-X^{j,N}_s)\! +\! K (X^{j,N}_s\!\!-X^{2,N}_s) )\!\Big)\!\!\cdot\!\! (X^{1,N}_s\!\!-X^{2,N}_s) 
L_\eta (\|X^{1,N}_s\!\!-X^{2,N}_s\|^2)\Big],
\end{align*}
so that by exchangeability, $B^{N,\eta}_s/(N-2)$ equals
\begin{align*}
 \E\Big[\indiq_{s\le \beta_N}\Big(K(X^{1,N}_s\!\!-X^{3,N}_s) + K (X^{3,N}_s\!\!-X^{2,N}_s) )\Big)\!\!\cdot \! (X^{1,N}_s\!\!-X^{2,N}_s)L_\eta (\|X^{1,N}_s\!\!-X^{2,N}_s\|^2)\Big].
\end{align*}
 Symmetrizing by exchangeability and since $\theta =2$, we get
\begin{align}\label{mmm}
A^{N,\eta}_s+\frac{2\theta} N B^{N,\eta}_s \geq \frac{4(N-2)}{3N} 
\E [ \indiq_{s\le \beta_N}F_{N,\eta}(X^{1,N}_s,X^{2,N}_s,X^{3,N}_s) ]  
-4,
\end{align}
where
\begin{align*}
F_{N,\eta}(x,y,z) =&L_\eta (\|x-y\|^2)[1+(x-y)\cdot( K(x-z) + K(z-y)) ] \\
&+ L_\eta (\|y-z\|^2)[1+(y-z)\cdot ( K(y-x) + K(x-z) )] \\
&+ L_\eta (\|z-x\|^2)[1+(z-x)\cdot (K(z-y) + K(y-x)) ].
\end{align*}
Setting $X= x-y$, $Y= y-z$, $Z=z-x$ and recalling that $ K(X) = -\frac{X}{\|X\|^2}\indiq_{X\ne 0}$ we find for $x$, $y$, $z$ distinct,
\begin{align*}
F_{N,\eta}(x,y,z) =& L_\eta (\|X\|^2)\Big[1+X\cdot \Big( \frac{Z}{\|Z\|^2} + \frac{Y}{\|Y\|^2} \Big)\Big] \\
&  L_\eta (\|Y\|^2)\Big[1+Y\cdot\Big( \frac{X}{\|X\|^2} + \frac{Z}{\|Z\|^2} \Big) \Big]\nonumber\\
&+  L_\eta (\|Z\|^2)\Big[1+Z\cdot \Big( \frac{Y}{\|Y\|^2} + \frac{X}{\|X\|^2} \Big)\Big]\nonumber\\
=&  L_\eta (\|X\|^2)X\cdot \Big( \frac{X}{\|X\|^2}+\frac{Y}{\|Y\|^2} + \frac{Z}{\|Z\|^2} \Big) 
\nonumber\\
&+  L_\eta (\|Y\|^2)Y\cdot \Big(\frac{X}{\|X\|^2}+\frac{Y}{\|Y\|^2} + \frac{Z}{\|Z\|^2} \Big) 
\nonumber\\
&+  L_\eta (\|Z\|^2)Z\cdot \Big(\frac{X}{\|X\|^2} + \frac{Y}{\|Y\|^2}+ \frac{Z}{\|Z\|^2} \Big)
\nonumber\\
=:&   G_{\eta}(x,y,z) \nonumber,
\end{align*}
where we have put 
\begin{gather*}
G_{\eta}(x,y,z)= \Big(L_\eta(\|X\|^2)X + L_\eta(\|Y\|^2)Y +L_\eta(\|Z\|^2)Z\Big)\cdot
\Big( \frac{X}{\|X\|^2} + \frac{Y}{\|Y\|^2}+\frac{Z}{\|Z\|^2} \Big).
\end{gather*}
Inserting this into \eqref{aaa1} and \eqref{mmm}, we find
\begin{align*}
S^{N,\eta}_t \geq& S^{N,\eta}_0 - 4 t + \frac{4(N-2)}{3N} \E\Big[ \int_0^{t\wedge \beta_N} G_{\eta}(X^{1,N}_s,X^{2,N}_s,X^{3,N}_s) \dd s \Big].
\end{align*}
 Observing that $ \varphi_\eta$ is bounded, we deduce  that $\sup_{N\ge 5,\eta\in (0,1]}S_t^{N,\eta} <\infty$. 
Since moreover $S_0^{N,\eta}\geq 0$, we conclude that 
\begin{equation}\label{ddd}
\sup_{N\ge 5,\eta\in(0,1]}
\E \Big[\int_0^{t\wedge \beta_N} G_{\eta}(X^{1,N}_s,X^{2,N}_s,X^{3,N}_s)\dd s \Big]<\infty.
\end{equation}
Recall that $G_{\eta}$ is nonnegative thanks to Lemma \ref{inegalitebarycentre} because $L_\eta$ is nonincreasing and positive. Finally,  using Lemma \ref{inegalitebarycentre} and that $r\mapsto L_\eta(r)-L_{\eta'}(r)$ is nonincreasing and nonnegative if $0<\eta<\eta'$, one easily shows that 
$\eta \to G_{\eta}(x,y,z)$ increases to $G(x,y,z)$ as $\eta$ decreases to $0$ for every $x$, $y$, $z\in \rr^2$ distinct. We thus
also deduce \eqref{bbb1} from \eqref{ddd} by monotone convergence because a.s. $X^{1,N}_s$, $X^{2,N}_s$ and $X^{3,N}_s$ are distinct for a.e $s\in [0, \beta_N]$ according to Theorem \ref{fj1}.
\end{proof}

 By Theorem \ref{thm2}-(i), we can find $(N_k)_{k\ge 0}$ and
$(\mu_t)_{t\ge 0}\in C([0,\infty),\cP(\rr^2))$ such that $\lim_k N_k=\infty$ and $ (\mu^{N_k,\beta_{N_k}}_t)_{t\ge 0}$ goes to $(\mu_t)_{t\ge 0 }$ in law as $k\to \infty$, where $C([0,\infty),\cP(\rr^2))$ is
endowed with the uniform convergence on compact time intervals, $\cP(\rr^2)$ being endowed with the
weak convergence topology. We obviously have $\mu_0=f_0$.

\vip

The following Lemma shows that the dispersion of $(\mu_t)_{t\ge 0}$ increases with time, which implies  that if $f_0$ is not equal to a Dirac mass, then for any $t\ge 0$, $\mu_t$ is not equal to a Dirac.

\begin{lemma}\label{monotone}
It holds that
$$
\mbox{a.s. for all }t\ge 0, \mbox{ all } x_0\in \rr^2, \quad  \int_{\rr^2} \|x-x_0\|^2 \mu_t(\dd x) \ge \int_{\rr^2} \|x-x_0\|^2f_0(\dd x).
$$
\end{lemma}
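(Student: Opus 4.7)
My plan is to apply the weak formulation for $(\mu_t)$ granted by Theorem \ref{thm2}-(ii) to a family of smooth bounded approximations of $x\mapsto\|x-x_0\|^2$, exploit the $\theta=2$ structural identity, and pass to the limit in the truncation. By translation invariance of \eqref{EDP}-\eqref{EDP2} and of the statement, I reduce to $x_0=0$. Fix once and for all $g\in C^2(\rr_+)$ concave and nondecreasing with $g(s)=s$ on $[0,1]$ and $g(s)$ constant on $[2,\infty)$, and set $\varphi_A(x):=A^2 g(\|x\|^2/A^2)$. Then $\varphi_A\in C^2_b(\rr^2)$, $\varphi_A\le\|x\|^2$, $\varphi_A(x)=\|x\|^2$ on $B(0,A)$, $\varphi_A$ is nondecreasing in $A$ with $\varphi_A\uparrow\|x\|^2$ pointwise, and most crucially $\|\nabla^2\varphi_A\|_\infty\le C_0$ uniformly in $A$ (direct computation gives $\nabla^2\varphi_A=2g'(s)I+4g''(s)xx^\top/A^2$ with $s=\|x\|^2/A^2$, and both $g'$ and $s\mapsto sg''(s)$ are bounded with compact support).

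Applying the weak formulation to $\varphi_A$ yields, almost surely and for all $t\ge 0$,
\begin{gather*}
\int_{\rr^2}\varphi_A\,\dd\mu_t-\int_{\rr^2}\varphi_A\,\dd f_0=\int_0^t D_A(s)\,\dd s,\\
D_A(s):=\tfrac12\int\Delta\varphi_A \,\dd\mu_s +\int\int K(x-y)\cdot [\nabla\varphi_A(x)-\nabla\varphi_A(y)]\,\dd\mu_s(x)\,\dd\mu_s(y).
\end{gather*}
Setting $B:=B(0,A)$, on $B\times B$ one has $\nabla\varphi_A=2x$ and $\Delta\varphi_A=4$, and the identity $K(x-y)\cdot 2(x-y)=-2\indiq_{x\ne y}$ gives the nonnegative contribution
$$2\mu_s(B)(1-\mu_s(B))+2\sum_{z\in B}\mu_s(\{z\})^2\ge 0.$$
Over $(B\times B)^c$, the uniform bound $\|\nabla^2\varphi_A\|_\infty\le C_0$ dominates the interaction integrand pointwise through $|K(x-y)\cdot[\nabla\varphi_A(x)-\nabla\varphi_A(y)]|\le \|\nabla^2\varphi_A\|_\infty\le C_0$ (thereby neutralizing the $1/\|x-y\|$ singularity of $K$), while $\mu_s\otimes\mu_s((B\times B)^c)\le 2\mu_s(B^c)$ and $|\Delta\varphi_A|\le C_0$ give a lower bound $-C\mu_s(B(0,A)^c)$ for the remainder. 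Therefore $D_A(s)\ge -C\mu_s(B(0,A)^c)$ and after integration,
$$\int_{\rr^2}\varphi_A\,\dd\mu_t\ge\int_{\rr^2}\varphi_A\,\dd f_0-C\int_0^t\mu_s(B(0,A)^c)\,\dd s.$$

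Letting $A\to\infty$ along $\nn$, monotone convergence gives $\int\varphi_A\,\dd\mu_t\uparrow\int\|x\|^2\,\dd\mu_t$ and $\int\varphi_A\,\dd f_0\uparrow\int\|x\|^2\,\dd f_0$, and dominated convergence (integrand bounded by $1$, pointwise $\to 0$) gives $\int_0^t\mu_s(B(0,A)^c)\,\dd s\to 0$ for each $(t,\omega)$. This proves the inequality for $x_0=0$ almost surely for every $t\ge 0$. The extension to all $x_0\in\rr^2$ is obtained by reducing the general $x_0$ case to the case $x_0=0$ for the shifted solution, intersecting over a countable dense set of $x_0$'s, and using continuity in $x_0$ of both sides (the LHS equals $\int\|x\|^2\,\dd\mu_t - 2x_0\cdot\int x\,\dd\mu_t+\|x_0\|^2$ when $\int\|x\|^2\,\dd\mu_t<\infty$, and is constantly $+\infty$ otherwise, and similarly for $f_0$). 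The main obstacle is the uniform-in-$A$ Hessian bound $\|\nabla^2\varphi_A\|_\infty\le C_0$, which is what makes the singular interaction term controllable over the "bad" region $(B\times B)^c$ without any a priori integrability of $K$ against $\mu_s\otimes\mu_s$; the scaling $\varphi_A(x)=A^2 g(\|x\|^2/A^2)$ is precisely what delivers this estimate.
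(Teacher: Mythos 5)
Your computation (the choice of a truncation with a Hessian bound uniform in $A$, the exact cancellation at $\theta=2$ between $\frac12\int\Delta\varphi_A\,\dd\mu_s$ and the interaction term over $B\times B$ via $K(x-y)\cdot 2(x-y)=-2\indiq_{x\ne y}$, and the remainder bound $-C\mu_s(B(0,A)^c)$) is essentially the same algebra as in the paper. The genuine problem is the very first step: you apply ``the weak formulation for $(\mu_t)$ granted by Theorem \ref{thm2}-(ii)''. At the point where Lemma \ref{monotone} sits, it is \emph{not} yet known that the subsequential limit $(\mu_t)_{t\ge 0}$ is a weak solution: the lemma is proved precisely in order to establish Proposition \ref{diffu} (non-atomicity of $\mu_t$, i.e.\ \eqref{ggg}), and Proposition \ref{diffu} is in turn what allows the paper, in the proof of Theorem \ref{thm2}-(ii), to pass to the limit in the singular term $I^2_t$ (the integrand $K(x-y)\cdot[\nabla\varphi(x)-\nabla\varphi(y)]$ is discontinuous on the diagonal, so convergence in law of $\mu^{N_k,\beta_{N_k}}$ only yields $I^2_t(\mu^{N_k,\beta_{N_k}})\to I^2_t(\mu)$ once one knows $\mu_s\otimes\mu_s$ does not charge the diagonal for a.e.\ $s$). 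So your argument is circular: it uses a conclusion whose proof depends on the lemma you are proving.

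The repair is exactly what the paper does: run the same test-function computation at the level of the particle system, i.e.\ apply Lemma \ref{ff}-(iii) with $\psi_A(x)=\varphi_A(\|x-x_0\|^2)$ to $\mu^{N_k,\beta_{N_k}}$, which is legitimate because it is an It\^o identity and requires no limiting PDE. This changes the analysis in two places where your shortcut no longer works. First, a martingale term $M^{A,k}_t$ appears, which forces you to take expectations (in the paper, against indicators $\indiq_B$, after a Skorokhod representation, with Cauchy--Schwarz to kill $M^{A,k}_t$ as $k\to\infty$); you then lose the possibility of treating the truncation error $\int_0^t\mu_s(B(0,A)^c)\,\dd s$ pathwise by dominated convergence. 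Second, since the limit $k\to\infty$ must be taken \emph{before} $A\to\infty$, the error term has to be controlled uniformly in $k$, which is where the uniform moment function $\psi$ of Proposition \ref{ascolici} and the Markov inequality enter, yielding the quantitative bound $Ct\,\sup_{N,s}\E[\psi(\|X^{1,N}_{s\wedge\beta_N}\|^2)]/\psi((A/2-\|x_0\|^2)\vee 0)$ that vanishes as $A\to\infty$. With these two modifications your scheme becomes the paper's proof; as written, the proposal does not prove the lemma.
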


If the RHS is infinite, this result implies that the LHS is also infinite.

\begin{proof}
 By the Skorokhod representation, we can find a probabilistic space $(\tilde \Omega, \tilde \cF, \tilde\PP)$ and some random variables $((\tilde{\mu}^{N_k}_t)_{t\ge 0})_{k\ge 0}$ and $(\tilde \mu_t)_{t\ge 0}$ such that the law of $(\tilde\mu^{N_k}_t)_{t\ge 0}$ equals the law of $(\mu^{N_k,\beta_{N_k}}_t)_{t\ge 0}$ for all $k\ge 0$, the law of $(\tilde \mu_t)_{t\ge 0}$ equals the law of $(\mu_t)_{t\ge 0}$ and $(\tilde\mu^{N_k}_t)_{t\ge 0}$ converges a.s. to $(\tilde\mu_t)_{t\ge 0}$ as $k\to \infty$. Since $(\mu_t)_{t\ge 0}$ and $(\tilde\mu_t)_{t\ge 0}$ have the same law, it suffices to show  the result  with $(\tilde \mu_t)_{t\ge 0}$ instead of $(\mu_t)_{t\ge 0}$. To this end, we divide the proof into several steps. 

\vip

 {\it Step 1.} We show that for all $B\in \tilde\cF$, all $x_0\in \rr^2$ and all $A > 0$, we have
 \begin{align}\label{iii}
\tilde\E\Big[\indiq_B \int_{\rr^2} \varphi_A( \|x-x_0\|^2) &\tilde\mu_t (\dd x)\Big]\ge   
\tilde\PP(B)\int_{\rr^2} \varphi_A ( \|x-x_0\|^2) f_0 (\dd x)\nonumber\\
& -Ct\Big(\Big(\frac{\sup_{N\ge 5, s\in [0,t]} \frac1N\sum_{i=1}^N\E [ \psi ( \|X^i_s-x_0\|^2 )]}{\psi ((A/2-\|x_0\|^2)\vee 0) }\Big) 
\wedge \tilde \PP(B)\Big),
\end{align}
 where  $\varphi_A(r) = \chi (r/A) r$ with $\chi \in C^\infty _c (\rr_+)$ such that $\chi (r) \in [0,1]$ for all $r\ge 0$, $\chi(r) = 1$ for all $r\in [0,1]$, and $\chi (r) = 0$ for all $r\ge 2$ and where we recall that $\psi$ is a nondecreasing function such that $\lim_{r\to \infty} \psi (r) = \infty$ defined in Proposition \ref{ascolici}.

\vip
 
We introduce $\psi_A(x) =\varphi_A(\|x-x_0\|^2)$ and observe that $\psi_A$, $\nabla \psi_A$
and $\nabla^2 \psi_A$ are bounded (uniformly in  $A\ge 1$) and that
$\nabla \psi_A(x) = 2(x-x_0) \varphi_A'(\|x-x_0\|^2)$ and 
$\Delta \psi_A(x) = 4(\varphi_A'(\|x-x_0\|^2) + \|x-x_0\|^2\varphi_A''(\|x-x_0\|^2))$.
We now apply Lemma \eqref{ff}-(iii) with $\psi_A$,
we multiply by $\indiq_B$ with $B\in \tilde\cF$
and we take the expectation in order to get,
\begin{align}\label{ccc}
 &S^{A,k}_t= S^{A,k}_0 + \tilde \E [ \indiq_B M^{A,k}_t ] + 2 \tilde\E \Big[ \indiq_B \int_0^t ( I^{1,A,k}_s + I^{2,A,k}_s )\dd s \Big],
\end{align}
where 
\begin{gather*}
S^{A,k}_t = \tilde\E\Big[\indiq_B \int_{\rr^2} \varphi_A ( \|x-x_0\|^2) \tilde\mu ^{N_k}_t (\dd x)\Big],\\
M^{A,k}_t = \frac{1}{N_k}\sum_{i=1}^{N_k} \int_0^t\nabla \psi_A (X^{i,N_k}_s) \dd B^i _s,\\
I^{1,A,k}_s = 2 \int_{\rr^2} [\varphi_A'(\|x-x_0\|^2) + \|x-x_0\|^2 \varphi_A''(\|x-x_0\|^2)] \tilde\mu_s^{N_k}(\dd x),
\end{gather*}
and $I^{2,A,k}_s$ equals
$$
\theta \int_{\rr^2}\!\int_{\rr^2}\!\!\!  K(x-y)\!\cdot\![(x-x_0)\varphi_A'(\|x-x_0\|^2)\!-\!(y-x_0)\varphi_A'(\|y-x_0\|^2)] \tilde\mu_s^{N_k}(\dd x) \tilde\mu_s^{N_k} (\dd y).
$$

Since $\varphi_A(r) = r$ for all $r \in [0, A]$
and since $\varphi_A'(\|x-x_0\|^2) + \|x-x_0\|^2\varphi_A''(\|x-x_0\|^2)$ is bounded  by a universal constant ,
\begin{align}\label{wwww}
I^{1,A,k}_s \ge &  2 \int_{\|x-x_0\|^2 \le A} \tilde \mu^{N_k}_s(\dd x) - C\int_{\|x-x_0\|^2 \ge A} \tilde \mu_s^{N_k}(\dd x)\nonumber \\
= & 2 \tilde \mu_s^{N_k}(\{x:  \|x-x_0\|^2 \le A \}) - C\tilde \mu_s^{N_k}(\{x:  \|x-x_0\|^2 \ge A \}).
\end{align}
Using again that $\varphi_A(r) = r$ for all $r \in [0, A]$, that $K(z)=-z/||z||^2\indiq_{\{z\neq 0\}}$ 
and that the Jacobian of $x\to (x-x_0)\varphi_A'(\|x-x_0\|^2)=\frac12\nabla \psi_A(x)$ is uniformly bounded,
\begin{align*}
I^{2,A,k}_s \ge & -\theta \int_{\|x-x_0\|^2\le A}\int_{\|y-x_0\|^2\le A} \indiq_{\{x\neq y\}}
\tilde \mu_s^{N_k} (\dd x) \tilde \mu_s^{N_k}(\dd y) \\
&- \! C \!\int_{\rr^2}\!\int_{\rr^2} \!\indiq_{\{ \|x-x_0\|^2\ge A \} \cup \{ \|y-x_0\|^2\ge A \}}\tilde \mu_s^{N_k}(\dd x)\tilde \mu_s^{N_k}(\dd y)\\
\ge & -\theta \tilde \mu_s^{N_k}\otimes \tilde\mu^{N_k}_s( \{x: \|x-x_0\|^2 \le A \}\otimes
\{y : \|y-x_0\|^2 \le A \} ) \\
&- 2C \tilde\mu_s^{N_k}( \{x: \|x-x_0\|^2\ge A \} )
\end{align*}
where we used in the last inequality that $\indiq_{C\cup D} \le \indiq_C + \indiq _D$  for all sets $C$, $D$.

We easily conclude that
\begin{equation}\label{eee}
I^{2,A,k}_s \ge -\theta \tilde \mu_s^{N_k}( \{x: \|x-x_0\|^2 \le A \})
- 2 C \tilde \mu_s^{N_k}( \{x: \|x-x_0\|^2\ge A \} ).
\end{equation}
Putting \eqref{ccc}, \eqref{wwww} and \eqref{eee} together, recalling that $\theta =2$ and using the 
Cauchy-Schwarz inequality, we find 
\begin{align}\label{sss}
S^{A,k}_t \ge &  S^{A,k}_0 - \sqrt{\PP (B)} \sqrt{ \tilde \E [(M^{A,k}_t)^2]}- 3C \E \Big[ \indiq_B\int_0^t \tilde\mu^{N_k}_s(\{ \|x-x_0\|^2\ge A\})\dd s \Big].
\end{align}
Finally, observing that 
\begin{align*}
\tilde \mu_s^{N_k}( \{ \|x-x_0\|^2\ge A) \!&\le\! \tilde \mu_s^{N_k}\!\Big( \Big\{ \|x\|^2\ge \Big(\frac{A}{2} - \|x_0\|^2\Big)\vee 0 \Big\} \Big)\!\\
& = \!\tilde \mu_s^{N_k}\Big( \Big\{ \psi( \|x\|^2)\ge \psi \Big(\Big( \frac{A}{2} - \|x_0\|^2\Big)\vee 0 \Big)\Big\} \Big),
\end{align*}
since $\psi$ is non-decreasing, applying the Markov inequality, using exchangeability of the family $((X^{i,N}_{t\wedge \beta_N})_{t\ge 0}, i\in \ig 1,N \id )$ for all $N\ge 5$ and injecting in \eqref{sss}, we get
\begin{align}\label{aaaax}
S^{A,k}_t\ge & S^{A,k}_0 - \sqrt{\tilde\PP (B)} \sqrt{ \tilde \E [(M^{A,k}_t)^2]}\\
&-Ct\Big(\Big(\frac{\sup_{N\ge 5, s\in [0,t]} \frac1N \sum_{i=1}^N\E [ \psi ( \|X^{i,N}_{s\wedge \beta_{N}}\|^2 )] }{\psi ((A/2-\|x_0\|^2)\vee 0)}\Big) \wedge \tilde\PP(B)\Big).
\end{align}
We easily check that $M^{A,k}_t \to _{k\to \infty} 0$ in $L^2$.
Moreover, since $F : \nu \in C([0,\infty),\cP(\rr^2)) \mapsto  \int_{\rr^2} \varphi_A(\|x-x_0\|^2)\nu _t(\dd x)$ 
is continuous and bounded, it holds that
$$
\lim_k S^{A,k}_t = \tilde \E\Big[\indiq_B \int_0^t \varphi_A(||x-x_0||^2) \tilde \mu_t(\dd x)\Big].
$$
Hence letting $k\to \infty$ in \eqref{aaaax} and using that $\tilde \mu_0=f_0$, we find \eqref{iii}.

\vip

{\it Step 2.} We conclude. We let $A$ going to infinity in \eqref{iii}, the monotone convergence 
theorem gives us that for all $B\in \tilde{\cF}$, all $x_0\in \rr^2$, 
$$
\tilde\E\Big[\indiq_B \int_{\rr^2}  \|x-x_0\|^2 \tilde\mu_t (\dd x)\Big]\ge  \tilde\PP(B)
\int_{\rr^2} \|x-x_0\|^2 f_0 (\dd x).
$$
Choosing $B=B_{t,x_0} = \{ \int_{\rr^2} \|x-x_0\|^2 \tilde{\mu}_t(\dd x) < \int_{\rr^2} \|x-x_0\|^2f_0(\dd x) \}$,
we conclude that $\PP (B_{t,x_0}) = 0$ (else we would have a contradiction).
We conclude by continuity that a.s., for all $t\geq 0$, all $x_0\in \rr^d$,
$\int_{\rr^2} \|x-x_0\|^2 \tilde{\mu}_t(\dd x)  \ge  \int_{\rr^2} \|x-x_0\|^2f_0(\dd x)$.
\end{proof}

\begin{prop}\label{diffu}
If $\max_{x\in \rr^2}f_0 (\{x\}) < 1$ i.e. if $f_0$ is not a Dirac mass, then $a.s.$, for a.e. every $t>0$, $\mu_t$ is a diffuse measure, i.e.
$$
\E \Big[ \int_0^\infty \int_{\rr^2}\int_{\rr^2} \indiq_{\{x=y\}}\mu_t (\dd x) \mu_t(\dd y) \dd t \Big]=0.
$$
\end{prop}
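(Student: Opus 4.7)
The plan is to derive from Lemma \ref{estimeeG} the bound
\begin{align*}
\E\Big[\int_0^T \int_{(\rr^2)^3} G(x,y,z)\,\mu_s(dx)\mu_s(dy)\mu_s(dz)\,ds\Big] \leq C_T \quad\text{for all }T>0,
\end{align*}
and then exploit the fact that $G = +\infty$ precisely on the ``double diagonal'' $D := \{(x,y,z) : \text{exactly two of } x,y,z \text{ coincide}\}$ to rule out atoms of $\mu_s$ of mass in $(0,1)$; Lemma \ref{monotone} will then exclude the remaining possibility $\mu_s = \delta_a$.

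The first preliminary is that $G : (\rr^2)^3 \to [0,\infty]$ is lower semicontinuous. Off the diagonal this holds by continuity, and on the triple diagonal $G = 0$ is automatic since $G \geq 0$ by Lemma \ref{inegalitebarycentre}. Along a sequence $(x_n,y_n,z_n)$ tending to a point of the double diagonal, say with $X_n \to 0$ and $Y_n,Z_n$ bounded away from $0$, applying Lemma \ref{inegalitebarycentre} with $\varphi(r) = L(r^2)$ and $\psi(r) = 1/r^2$ yields
\begin{align*}
G(x_n,y_n,z_n) \geq [L(\|X_n\|^2) - L(\|Y_n\|^2)](1 - \|X_n\|^2/\|Y_n\|^2) \to \infty.
\end{align*}

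For the main bound, fix a bounded continuous $\phi : (\rr^2)^3 \to \rr_+$ with $\phi \leq G$. Splitting the time integral at $T \wedge \beta_N$ and using $\phi \leq G$ on $[0,T\wedge\beta_N]$ and $\phi \leq \|\phi\|_\infty$ on $[T\wedge\beta_N,T]$, Lemma \ref{estimeeG} gives
\begin{align*}
\E\Big[\int_0^T \phi(X^{1,N}_{s\wedge\beta_N}, X^{2,N}_{s\wedge\beta_N}, X^{3,N}_{s\wedge\beta_N})\,ds\Big] \leq C_T + T\|\phi\|_\infty \PP(\beta_N<T),
\end{align*}
whose RHS tends to $C_T$ along $N = N_k \to \infty$ by Proposition \ref{fj2}. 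On the other hand, exchangeability gives, uniformly in $s$,
\begin{align*}
\E\big[\phi(X^{1,N}_{s\wedge\beta_N}, X^{2,N}_{s\wedge\beta_N}, X^{3,N}_{s\wedge\beta_N})\big] = \frac{N^3}{N(N-1)(N-2)} \E\Big[\int \phi\,d(\mu^{N,\beta_N}_s)^{\otimes 3}\Big] + O(\|\phi\|_\infty/N).
\end{align*}
Applying continuous mapping to the weak convergence $(\mu^{N_k,\beta_{N_k}}_t)_{t \geq 0} \to (\mu_t)_{t \geq 0}$ (with the maps $\mu \mapsto \mu^{\otimes 3}$ and $\nu \mapsto \int_0^T \int \phi\,d\nu_s\,ds$), the RHS converges to $\E[\int_0^T \int \phi\,d\mu_s^{\otimes 3}\,ds]$. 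Taking the supremum over all such bounded continuous $\phi \leq G$ and invoking monotone convergence (the lsc function $G$ being the pointwise supremum of such $\phi$) yields the desired bound.

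The bound implies $\int G\,d\mu_s^{\otimes 3} < \infty$ a.s.\ for a.e.\ $s \geq 0$, hence $\mu_s^{\otimes 3}(D) = 0$. Direct integration gives $\mu_s^{\otimes 3}(D) = 3\sum_a \mu_s(\{a\})^2 (1 - \mu_s(\{a\}))$, so every atom of $\mu_s$ must have mass $0$ or $1$. The possibility $\mu_s = \delta_a$ is excluded by Lemma \ref{monotone} applied with $x_0 = a$: it would force $\int \|x-a\|^2 f_0(dx) = 0$, i.e.\ $f_0 = \delta_a$, contradicting $\max_x f_0(\{x\}) < 1$. Hence $\mu_s$ is atomless a.s.\ for a.e.\ $s$, yielding $\int\int \indiq_{x=y}\,\mu_s(dx)\mu_s(dy) = \sum_a\mu_s(\{a\})^2 = 0$ and the proposition by Fubini. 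The main obstacle will be the limit passage above, particularly the careful handling of the stopping time $\beta_N$, establishing the weak convergence of the $3$-particle marginals to $\E[\mu_s^{\otimes 3}]$, and the exchange of supremum with expectation without losing information.
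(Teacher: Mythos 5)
Your proposal is correct and follows essentially the same strategy as the paper: transfer the uniform bound of Lemma \ref{estimeeG} on $G$ to the limiting measure (handling the diagonal terms of the empirical tensor product and the stopping times $\beta_{N_k}$ exactly as the paper does), use that $G=\infty$ when exactly two points coincide to force every atom of $\mu_s$ to have mass $0$ or $1$, and exclude a full Dirac mass via Lemma \ref{monotone}. The only differences are cosmetic: you pass to the limit through bounded continuous minorants of the (correctly verified) lower semicontinuous function $G$, where the paper uses the explicit truncations $\chi_\e(G\wedge M)$, and you compute $\mu_s^{\otimes 3}$ of the double diagonal directly instead of the paper's argument by contradiction.
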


\begin{proof}

{\it Step 1.} We first show that

$$
\E\Big[ \int_0^t \int_{\rr^2}\int_{\rr^2}\int_{\rr^2} G(x,y,z) \mu_s(\dd x) \mu_s(\dd y)\mu_s(\dd z) \dd s \Big] < \infty.
$$
According to Proposition \ref{estimeeG}, there exists $C>0$ such that for all $k\ge 0$, all $M>0$, all $\e>0$, 
\begin{align*}
\E \Big[\int_0^{t\wedge\beta_{N_k}} \chi_\e (x,y,z)(G( X^{1,N_k}_s, X^{2,N_k}_s, X^{3,N_k}_s)\wedge M) \dd s \Big]\le C,
\end{align*}
where $\chi_\e (x,y,z) $ is a smooth approximation of $\indiq_{D_\e}$ with 
$D_\e=\{ (x,y,z): \|X\|^2+\|Y\|^2+\|Z\|^2 \ge  \e \}$ such that $\chi_{2\e} \le \indiq_{D_\e} \le \chi_{\e}$. We recall that we have set $X=x-y$, $Y=y-z$, and $Z=z-x$. This implies by exchangeability
\begin{align*}
& \E \Big[\int_0^{t}\! \int_{\rr^2} \int_{\rr^2} \int_{\rr^2}\chi_\e (x,y,z) ( G(x,y,z)\wedge M )\mu^{N_k,\beta_{N_k}}_s(\dd x) \mu^{N_k,\beta_{N_k}}_s(\dd y)  \mu^{N_k,\beta_{N_k}}_s(\dd z)\dd s \Big] \\
 \le&  \E \Big[\!\int_0^{t\wedge \beta_{N_k}}\!\!\! \int_{\rr^2}\! \int_{\rr^2}\! \int_{\rr^2}\!\!\chi_\e (x,y,z) (G(x,y,z)\wedge M)\mu^{N_k,\beta_{N_k}}_s(\dd x) \mu^{N_k,\beta_{N_k}}_s(\dd y)  \mu^{N_k,\beta_{N_k}}_s(\dd z)\dd s \Big]
 \\
 &+ M \E [ t-t\wedge \beta_{N_k} ] \\
 \le& \frac{(N_k-1)(N_k-2)}{N_k^2}C + M\frac{3N_k-2}{N_k^2} +M \E [ t-t\wedge \beta_{N_k}].
\end{align*}
 The last inequality is established distinguishing if $x$, $y$, $z$ are distinct or not.  Since $(x,y,z) \mapsto  \chi_\e (x,y,z)(G(x,y,z) \wedge M)$ is continuous and bounded, 
sending $k$ to infinity gives
\begin{align*}
 \E \Big[\int_0^{t}\! \int_{\rr^2} \int_{\rr^2} \int_{\rr^2}\chi_\e (x,y,z) ( G(x,y,z)\wedge M )\mu_s(\dd x) \mu_s(\dd y)  \mu_s(\dd z)\dd s \Big] \le C ,
\end{align*}
indeed, $\E[ t-t\wedge \beta_{N_k}] \le t\PP( \beta_{N_k} \le t) \to_{k\to \infty} 0$ according to Proposition \ref{fj2}-(ii). We conclude using the monotone convergence theorem twice with $\e \to 0$ and $M\to \infty$. 

\vip

{\it Step 2.} We show that a.s., for almost every $s\ge0$, all $x\in \rr^2$, $\mu_s(\{x\}) \in \{0,1\}$. If it was not the case, it would exist with positive probability a bounded set $A \in \cB(\rr_+)$ with positive Lebesgue measure such that for all $s\in A$ there exists $a_s \in (0,1)$, $x_s \in \rr^2$ and a  non-trivial  measure $g_s$ such that $\mu_s = a_s \delta_{x_s} + g_s$ and $g_s(\{x_s\})=0$. On this event, we would have for all $s\in A$,
\begin{align*}
\int_{\rr^2} \int_{\rr^2} \int_{\rr^2}G(x,y,z)\mu_s(\dd x) \mu_s(\dd y)  \mu_s(\dd z) \ge  a_s^2\int_{\rr^2}G(x_s,x_s,z)  g_s(\dd z) = \infty,
\end{align*}
Indeed, recall that $G(x,x,z) = \infty$ if $z\ne x$. 
 We get a contradiction with Step 1 by integrating time and taking the expectation. 
 Notice that the measurability of $s\mapsto a_s$, $s\mapsto x_s$, and $s\mapsto g_s$ are not required to 
justify this computation.
 
\vip

{\it Step 3.} By Lemma \ref{monotone} we see that a.s., for all $t>0$, $\mu_t$ is not a (full) Dirac measure. 
Indeed, if with positive probability there exists $t>0$ such that $\mu_t$ is a Dirac measure, 
say $\delta_{x_0}$ with $x_0\in \rr^2$, then we have 
$0=\int_{\rr^2} \|x-x_0\|^2 \mu_t(\dd x) \ge \int_{\rr^2} \|x-x_0\|^2 f_0(\dd x)$ 
which implies $f_0 = \delta_{x_0}$ and this is forbidden
since $\max_{x\in \rr^2} f_0(\{x\})<1$ by assumption.

\vip

{\it Step 4.} Gathering Steps 2 and 3, we conclude that a.s., $\sup_{x\in \rr^2}\mu_t(\{x\})=0$ for a.e. $t\geq 0$,
whence the result. 
\end{proof}

We finally give the 

\begin{proof}[Proof of Theorem \ref{thm2}-(ii)]

Recall that $\theta =2$, $f_0 \in \cP(\rr^2)$ and that $(\mu^{N,\beta_N})_{N\ge 5}$ is the corresponding 
family of empirical processes. We know by Theorem \ref{thm2}-(i), that the family $\{ (\mu^{N,\beta_N}_t)_{t\ge 0}, N\ge 5\}$ is tight, so we can consider $(N_k)_{k\ge 0}$ and a random variable
$(\mu_t)_{t\ge 0}$ belonging to $C([0,\infty),\cP(\rr^2))$ such that $\lim_k N_k=\infty$ and
$(\mu^{N_k,\beta_{N_k}}_t)_{t\ge 0}$ goes to $(\mu_t)_{t\ge 0}$ in law as $k\to \infty$. Moreover we have $\mu_0=f_0$ since by hypothesis, $\mu^N_0$ goes weakly to $f_0$ in probability as $N\to \infty$. It suffices to prove that $(\mu_t)_{t\ge 0}$ satisfies \eqref{ggg}, which we have already seen in Proposition \ref{diffu},
and is a weak solution to \eqref{EDP}-\eqref{EDP2}, which we now do.

\vip

We apply Lemma \ref{ff}-(iii) to $\varphi \in C^2_b(\rr^2)$ and get 
$$
I^1_t(\mu^{N_k,\beta_{N_k}}) -I^2_t(\mu^{N_k,\beta_{N_k}})= M_k(t)+R_{k}(t),
$$
where 
\begin{align*}
M_k(t) = \frac1{N_k} \int_0^t \sum_{i=1}^{N_k} \nabla \varphi (X^{i,N_k}_s)\cdot\dd B^i_s,
\end{align*}
for all $\nu \in C([0,\infty), \cP(\rr^2))$, 
\begin{gather*}
I^1_t(\nu) = \int_{\rr^2} \varphi (x) \nu_t(\dd x)-\int_{\rr^2} \varphi (x) \nu_0(\dd x)-\frac12 \int_0^t \int_{\rr^2} \Delta \varphi (x) \nu_s(\dd x)\dd s,\\
I^2_t(\nu)=\frac\theta2\int_0^t \int_{\rr^2}\int_{\rr^2}  K(x-y)\cdot [\nabla\varphi (x)-\nabla \varphi(y)] 
\nu_s(\dd x) \nu_s (\dd y) \dd s,
\end{gather*}
and
\begin{align*}
R_{k}(t) =& -\frac12 \int_{t\land \beta_{N_k}}^t \int_{\rr^2} \Delta \varphi (x) \mu^{N_k,\beta_{N_k}}_s(\dd x)\dd s,
\\
&+\frac\theta2\int_{t\land \beta_{N_k}}^t \int_{\rr^2}\int_{\rr^2}  K(x-y)\cdot [\nabla\varphi (x)-\nabla \varphi(y)] 
\mu^{N_k,\beta_{N_k}}_s(\dd x) \mu^{N_k,\beta_{N_k}}_s (\dd y) \dd s.
\end{align*}
Using that $||K(x)||\leq ||x||^{-1}\indiq_{\{x\neq 0\}}$ and that $D^2\varphi$ is bounded, one easily checks that
$$
\E[|R_k(t)|]\leq C \E[t-t\land \beta_{N_k}]\leq C t \PP(\beta_{N_k} \leq t),
$$
which tends to $0$ as $k\to \infty$ by Proposition \ref{fj2}. 

\vip

Hence we conclude exactly as in Step 2 of the proof of Theorem \ref{thm1}-(ii) that 
a.s., for each $t\geq 0$, $I^1_t(\mu) = I^2_t(\mu)$, which precisely means that
$\mu$ is a weak solution to \eqref{EDP}-\eqref{EDP2}. Observe that 
we may also use here that a.s., $\mu_s\otimes\mu_s(D)=0$ for a.e. $s\geq 0$,
where $D=\{(x,y)\in (\rr^2)^2 : x=y\}$,  by Proposition \ref{diffu}.
\end{proof}

\begin{appendix}

\section{Appendix}
\setcounter{thm}{0}
\setcounter{equation}{0}

We prove here a de La Vallée Poussin type lemma.
\begin{lemma}\label{vallepoussin}
If $(X_n)_{n\ge 0}$ is a tight sequence of random variables with values in $\rr_+$, then there exists some nondecreasing $\psi \in C^2(\rr_+)$ such that $\lim_{r\to \infty} \psi (r) = \infty$, $\psi (0) = 1$, $\psi (2r) \le C \psi (r)$ and $(1+r)|\psi'(r)| + r |\psi ''(r) |\le C$ for some constant $C>0$ and 
\begin{align*}
\sup_{n\ge 0} \E[ \psi (X_n)] < \infty.
\end{align*}
\end{lemma}

\begin{proof}
Since $(X_n)_{n\ge 0}$ is tight, there exists a sequence of increasing positive numbers $(a_k)_{k\ge 0}$ such that 
$$
\sup_{n\ge 0} \PP (X_n \ge a_k) \le 2^{-k}.
$$
Moreover we can choose this sequence such that $a_0= 1$ and for all $k\ge 0$, $a_{k+1} \ge 2a_k$. 
We set, for all $k\ge 0$, $m_k = (a_{k+1}-a_k)^{-1}$. For all $r\ge 0$, we define 
$$
\psi (r) = 1 + \sum_{k\ge 0} \chi (m_k(r-a_k)),
$$
where $\chi$ is a smooth nondecreasing function such that $\chi(x) = 0$ for all $x\le 0$ and $\chi (x) = 1$ for all $x\ge 1$. Observe that for every $r\ge 0$, $\psi(r)<\infty$ since the sum in the definition has a finite number 
of non-zero terms. 
Clearly, $\psi$ is smooth, nondecreasing and $\psi (0) = 1$. Noting that $\psi(r)\geq n+1$ for all
$r \geq a_n$ and all $n\geq 0$ 
(because then $m_k(r-a_k)\geq 1$ for all $k \in \ig 0,n-1\id$ by definition of $m_k$, with the convention
that $\ig 0,-1\id=\emptyset$), we conclude that
$\lim_{r\to \infty} \psi (r) = \infty$. 
Moreover, for all $n\ge 0$,  since $\chi \le \indiq_{\rr_+}$,
\begin{align*}
\E [ \psi (X_n) ] \le 1+\sum_{k\ge 0} \E [ \indiq_{X_n \ge a_k}]\le 1 + \sum_{k\ge 0} 2^{-k}=3.
\end{align*}
It is finally sufficient to prove that 
\begin{itemize}
\item (a) $\sup_{r\ge 0}[(1+r)|\psi'(r)| + r|\psi''(r)|] <\infty$,
\item (b) $\sup_{r\ge 0}\psi(2r)/\psi(r) <\infty$.
\end{itemize} 

\vip

We first prove (a). Since the sum in the definition of $\psi$ has a finite number of non-zero terms,
we get
\begin{gather*}
\psi' (r) = \sum_{k\ge 0} m_k \chi '(m_k(r-a_k)),\qquad
\psi''(r) = \sum_{k\ge 0} m_k^2 \chi''(m_k(r-a_k)). 
\end{gather*}
Since for every $k \ge 0$, $r\mapsto \chi'(m_k(r-a_k))$ is supported by $[a_k, a_k +m_k^{-1}] = [a_k,a_{k+1}]$ 
(by definition of $m_k$), and since for every $r\ge 0$, there exists a unique $k_r\ge 0$ such that 
$a_{k_r} \le r <a_{k_r+1} = a_{k_r} + m_{k_r}^{-1}$, we get $\psi' (r) =  m_{k_r} \chi'(m_{k_r}(r-a_{k_r}))$ 
so that  $(1+r)|\psi'(r)|$ is less than
\begin{gather*}
(1+a_{k_r} + m_{k_r}^{-1}) m_{k_r} \|\chi'\|_\infty = (m_{k_r}+m_{k_r}a_{k_r} + 1)\|\chi'\|_\infty \le \sup_{k\ge 0} (m_k+m_ka_k + 1)\|\chi'\|_\infty.
\end{gather*}
However, for all $k\ge 0$, $m_k a_k = a_k/(a_{k+1}-a_k) \le 1/2$ since $a_{k+1}\ge 2 a_k$ by construction, 
and the sequence $(m_k)_{k\ge 0}$ is clearly bounded, so that $\sup_{r\ge 0} (1+r)|\psi'(r)|$ is finite. Finally, using the same kind of arguments, we find
$$
r\psi''(r) \le \sup_{k\ge 0} (a_k + m_k^{-1})m_k^2 \|\chi''\|_\infty 
=\sup_{k\ge 0} (a_km_k + 1)m_k \|\chi''\|_\infty <\infty,
$$
since as already seen, the sequences $(a_km_k)_{k\geq 0}$ and $(m_k)_{k\geq 0}$ are bounded.
\vip

We end up by proving (b). Fix $r\geq 0$ and consider $k_r$ such that 
$a_{k_r} \le r <a_{k_r+1}$. Then we have $\psi(r) \geq k_r+1$, as seen a few lines above.
Moreover, since $2r<2a_{k_r+1}\leq a_{k_r+2}$, 
we have $\psi(2r) \leq k_{r}+3$ (because $m_k(2r-a_k)\leq 0$ for all $k\geq k_{r}+2$). All in all,
$$
\frac{\psi(2r)}{\psi(r)} \le \frac{k_r+3}{k_r+1}\le 3,
$$
which concludes the proof. 
\end{proof}

\end{appendix}

\section{Acknowledgments}
We would like to warmly thank Nicolas Fournier for fruitful conversations and remark. Moreover we thank the anonymous referees, an Associate
Editor and the Editor for their constructive comments that improved the
quality of this paper.
\end{document}